\newtheorem{thm}{Theorem}[subsection]
\newtheorem{prop}[thm]{Proposition}
\newtheorem{cor}[thm]{Corollary}
\newtheorem{lem}[thm]{Lemma}
\theoremstyle{definition}
\newtheorem{defn}[thm]{Definition}
\theoremstyle{remark}
\newtheorem{ex}[thm]{Example}
\newcommand{\cat}[1]{\mathfrak{#1}}
\DeclareMathOperator{\ob}{Ob}
\DeclareMathOperator{\Del}{Del}
\DeclareMathOperator{\sgn}{sgn}
\newcommand{\Set}{\mathbf{Set}}
\begin{document}

\title[Incidence Hypergraphs]{Incidence Hypergraphs: Box Products and the Laplacian}

\author[1]{\fnm{Will} \sur{Grilliette}}\email{wbgrill@nsa.gov}
\affil[1]{\orgname{National Security Agency}, \orgaddress{\city{Fort George G Meade MD}, \postcode{20755-6844}, \state{MD}, \country{USA}}}

\author*[2]{\fnm{Lucas J.} \sur{Rusnak}}\email{Lucas.Rusnak@txstate.edu}
\affil*[2]{\orgdiv{Texas State University}, \orgname{Department of Mathematics}, \orgaddress{\street{601 University Drive}, \city{San Marcos}, \postcode{78666}, \state{TX}, \country{USA}}}

\equalcont{These authors contributed equally to this work.}

\abstract{The box product and its associated box exponential are characterized for the categories of quivers (directed graphs), multigraphs, set system hypergraphs, and incidence hypergraphs. It is shown that only the quiver case of the box exponential can be characterized via homs entirely within their own category. An asymmetry in the incidence hypergraphic box product is rectified via an incidence dual-closed generalization that effectively treats vertices and edges as real and imaginary parts of a complex number, respectively. This new hypergraphic box product is shown to have a natural interpretation as the canonical box product for graphs via the bipartite representation functor, and its associated box exponential is represented as homs entirely in the category of incidence hypergraphs; with incidences determined by incidence-prism mapping. The evaluation of the box exponential at paths is shown to correspond to the entries in half-powers of the oriented hypergraphic signless Laplacian matrix.}

\keywords{Box product, incidence hypergraph, set system hypergraph, signless Laplacian, monoidal product}

\pacs[MSC Classification]{05C76, 05C65, 05E99, 18A10, 18A40}

\maketitle

\setcounter{tocdepth}{2}
\tableofcontents

\section{Introduction}

The box product of two graphs $G_1$ and $G_2$, denoted $G_1 \square G_2$, is a graph with vertex set $V(G_1) \times V(G_2)$ where two vertices $(u_1,u_2)$ and $(w_1,w_2)$ are adjacent via an edge in $G_1 \square G_2$ if and only if, for $i \neq j$, $u_i = w_i$ and $u_j$ is adjacent to $w_j$. This produces a copy of $G_1$ at every vertex of $G_2$ and a copy of $G_2$ at every vertex of $G_1$; effectively turning each adjacency pair into a ``box,'' or $4$-cycle in $G_1 \square G_2$. The box product $G \square P_1$, consisting of a graph $G$ with a path of length $1$, is called the prism of $G$; while specializing $G$ to a path of length $k$ is a ladder. Figure \ref{fig:PrismLadder} depicts these two box products as well as its namesake single adjacency product; being a closed symmetric monoidal product, commutativity, associativity and exponents are implied. 

\begin{figure}[H]
    \centering
    \includegraphics{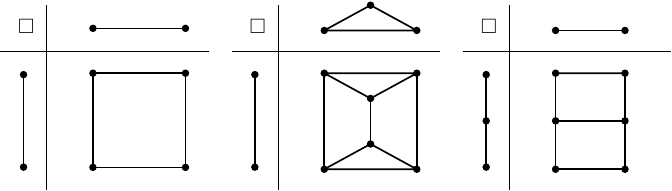}
    \caption{Left: Two edges forming a box. Middle: A $K_3$ prism. Right: A ladder with height $2$.}
    \label{fig:PrismLadder}
\end{figure}

Box products have applications in the study of chromatic numbers where $\chi_{G \square H} = \max\{\chi_{G},\chi_{H}\}$ \cite{Sabidussi1}, and domination numbers where $\gamma_{G \square H} = \max\{\gamma_{G},\gamma_{H}\}$ \cite{Vizing1}, and recognizing a box product has been shown to be in polynomial time \cite{IMRICH2007472}. Specific evaluations of box products have garnered their own attention where $G \square P_1$ has been used to study prism-hamiltonicity \cite{PrismHam2,PrismHam3,PrismHam1}; where being prism-hamiltonian is stronger than having a $2$-walk and weaker than having a hamilton path. In terms of morphisms, proper colorings are mappings into a smallest complete graph, hamiltonicity is the existence of a monic map of a circuit of length $\lvert V \rvert$, and powers of both the Laplacian and adjacency matrices are unified by mapping paths into the underlying incidence structure simply by adjusting incidence-monic-ness \cite{OHHar,AH1}. While a counterexample was recently produced \cite{HedetniemiFalse}, Hedetniemi conjectured that $\chi(G \times H)=\min\{\chi(G),\chi(H)\}$, which is equivalent to every complete graph being multiplicative in the category of simple graphs. It was shown \cite{El_Zahar_1985} that the multiplicativity of $K$ is equivalent to either $G$ or the exponential graph $K^G$ being $K$-colorable, for every graph $G$, reinforcing the necessity to study exponentials and strive for a natural approach to homs.

Graph products \cite{handbookpg,ImrichPGSR}, graph transformations \cite{graphtransformation,graphgrammars3,GandH}, and categorical graph theory \cite{brown2008, bumby1986, dorfler1980, raeburn, schiffler} provide a natural approach and a unified context to study graphs along with their mapping properties. The box and categorical products of simple graphs are two graph products that turn the category of simple graphs into a closed symmetric monoidal category.  Unfortunately, the exponential structure does not extend pleasantly to parallel edges, as shown in \cite{IH1}, the category of multigraphs $\cat{M}$ is not cartesian (categorical product) closed.  We demonstrate that a box exponential does exist for both $\cat{M}$ and the category $\cat{H}$ of set-system hypergraphs, but the structure is quite peculiar as seen in Definition \ref{box-exponential-h}.  This strange structure seems to be due to the failure of the edge functor to admit a left adjoint, much like the failure of the cartesian structure  in \cite{IH1}.  The work in \cite{IH1} remedied the cartesian issue by introducing the category of incidence hypergraphs $\cat{R}$, and the current work seeks the same goal for the box product and its internal hom structure.

To that end, in Section \ref{sec:the4boxes} we characterize and generalize box products and their exponentials for the following four graph-like categories: (1) the category of quivers $\mathfrak{Q}$ (directed graphs); (2)  the category of set-system hypergraphs $\mathfrak{H}$; (3) the category of multigraphs $\mathfrak{M}$; and (4) the category of incidence hypergraphs $\mathfrak{R}$. Each are shown to be a closed symmetric monoidal category with respect to the corresponding box product. These provide a characterization of ``cartesian exponentials'' and ``diamond products'' related to the examination of hom-complexes \cite{DOCHTERMANN2009490,kilp2001,knauer-book}. In Section \ref{sec:LapProd} we introduce a new dually-closed incidence-theoretic box product and exponential for hypergraphs called the \emph{Laplacian product (exponential)}. The Laplacian product $G \blacksquare H$ effectively treats vertices and edges as real and imaginary parts of a complex number --- that is, and edge times and edge will be a vertex. When evaluated at paths, it provides a dual treatment of prisms and ladders that blur the concepts of hamiltonian and eulerian. The Laplacian exponential $[G,H]_{L}$ is shown to consist entirely of $\cat{R}$-homs with vertices, edges, and incidences of the Laplacian exponential as follows:
\begin{align*}
\check{V}[G,H]_{L}&=\cat{R}(G,H), \\
\check{E}[G,H]_{L}&=\cat{R}\left(G^{\#},H\right)\cong\cat{R}\left(G,H^{\#}\right), \\
I[G,H]_{L} &=\cat{R}\left(G\blacksquare \check{P}_{1/2},H\right).
\end{align*}
The incidences are determined by the incidence-prism $G\blacksquare \check{P}_{1/2}$ mapping into $H$. Section \ref{sec:upsilon} demonstrates the naturalness by providing a graphical characterization of the Laplacian product to the box product of bipartite representation graph. Exponential hypergraphs of the form $[\check{P}_{k/2},G]_{L}$
are combinatorially characterized via the weak-walk interpretation of the Laplacian from \cite{OHHar,IH2,AH1}, and shown that the vertices and edges are entries in the $k^{\text{th}}$ power of the Laplacian. We also observe that the incidences produced by incidence-prism embeddings produce the matrix minor framework to place the Laplacian entries. Since the incidences are naturally contravariant to the prism-hamiltonian problem, we leave the question open as to what is more natural to specialize to generalized notions of (prism)-hamiltonicity; the incidences of the exponential or direct evaluations of the form $[\check{C}_{k},G]_{L}$. Since $\cat{R}$-homs of the form $\cat{R}(\check{C}_{k},G)$ are vertices in the Laplacian exponential, the existence of a specific vertex in $[\check{C}_{k},G]_{L}$ corresponding to a monic closed path (cycle) embedding would tell us if $G$ was hamiltonian. Replacing $G$ with $G\blacksquare \check{P}_{1/2}$ would be a generalization of prism-hamiltonian.

\subsection{Incidence Hypergraphs}

The category of incidence hypergraphs $\cat{R}$ introduced in \cite{IH1} serves to remedy some peculiar phenomena in preexisting categories of graph-like objects, while injectivity and its relation to hypergraphic uniformity was characterized in \cite{IH2}, with applications to characteristic polynomials \cite{OHSachs,OHMTT} and their connection to Hadamard matrices \cite{Reff3,Reff5}. This section is a condensed version of \cite{IH1} necessary to discuss the objects in the category of incidence hypergraphs. A formal statement and glossary to provide quick combinatorial context are included in the Appendix.

An incidence hypergraph is a quintuple $G=(\check{V}, \check{E}, I, \varsigma, \omega)$ consisting of a set of vertices $\check{V}$, a set of edges $\check{E}$, a set of incidences $I$, and two incidence maps $\varsigma:I\to\check{V}$, and $\omega:I\to\check{E}$. Note, this notation is from \cite{IH1}, where the set decorations distinguish between the functors into $\Set$ for different graph-like categories; for example, $\check{V}(G)$ is the set of vertices of an incidence hypergraph, $\vec{V}(G)$ is the set of vertices of a quiver, and $V(G)$ is the set of vertices of a graph. A \emph{directed path of length $n/2$} is a non-repeating sequence 
\begin{equation*}
\check{P}_{n/2}=(a_{0},i_{1},a_{1},i_{2},a_{2},i_{3},a_{3},...,a_{n-1},i_{n},a_{n})
\end{equation*}
of vertices, edges, and incidences, where $\{a_{\ell }\}$ is an alternating sequence of vertices and edges, and $i_{j}$ is an incidence between $a_{j-1}$ and $a_{j}$. The \emph{tail} of a path is $a_0$ and the \emph{head} of a path is $a_n$. In terms of paths, the generators of $\cat{R}$ are the path of length zero consisting of a single vertex, the path of length zero consisting of a single edge, and the $1$-edge. The $1$-edge generator is critical to the structure theorems as it is also terminal. To denote a $1$-edge we use $\check{P}_{1/2}$ when we want to emphasize the vertex to edge path nature of the object, and we use $I^{\diamond}(\{1\})$ when we want to emphasize that it is also the left adjoint of the incidence functor on a singleton set.

It was shown in \cite{IH1} that there is a logical functor from the category of quivers to incidence hypergraphs $\xymatrix{\cat{Q}\ar[r]^{\Upsilon} & \cat{R}}$  that characterizes the quiver exponential entirely as hom-sets from $\cat{R}$. The left adjoint $\Upsilon ^{\diamond}$ produces the bipartite incidence quiver, and when composed with the undirecting functor $U$, $U\Upsilon ^{\diamond}$ is the canonical bipartite representation of a hypergraph.

\begin{figure}[H]
    \centering
    \includegraphics{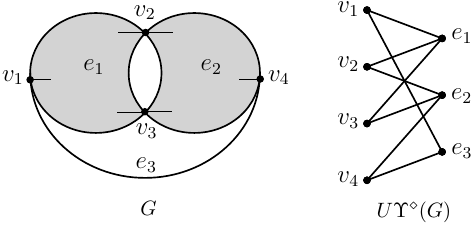}\\
    \caption{An incidence hypergraph and its bipartite representation via functors.}
    \label{fig:GandBipartite}
\end{figure}

We demonstrate that $U\Upsilon ^{\diamond}$ is a strong symmetric monoidal functor that links the graph box product of bipartite representations with a new dual-closed incidence box product. Moreover, path evaluations of the exponential (right adjoint) of this new box product has a combinatorial interpretation as half powers of the signless hypergraphic Laplacian.

\subsection{Oriented Hypergraphs}
\label{ssec:OHs}

An \emph{orientation} of an incidence hypergraph is a function $\sigma:I\rightarrow\{+1,-1\}$. While signed graphs and incidence orientations are have their foundations in \cite{MR0267898,OSG}, the concept was extended to hypergraphs in \cite{AH1, OH1}. Spectral properties and eigenvalue bounds of oriented hypergraphs have been studied in \cite{Mulas4,Reff2,Reff4,Wang1}, while various characteristic polynomials were characterized via incidence-path mapping families in \cite{OHSachs, OHMTT}, generalizing the work in \cite{Sim1, Seth1}.

The matrices commonly associated to algebraic graph theory have oriented hypergraphic analogs, and have been combinatorially classified via weak walks in \cite{AH1} via path embeddings. A \emph{directed weak walk of $G$} is the image of an incidence-preserving map of a directed path into $G$. A \emph{backstep of }$G$ is a non-incidence-monic map of $\check{P}_{1}$ into $G$; a \emph{loop of }$G$ is an incidence-monic map of $\check{P}_{1}$ into $G$ that is not vertex-monic; and a \emph{directed adjacency of }$G$ is a map of $\check{P}_{1}$ into $G$ that is incidence-monic. The \emph{sign of a weak walk} $W$ is 
\begin{equation*}
\sgn(W)=(-1)^{\lfloor n/2\rfloor }\prod_{h=1}^{n}\sigma (i_{h}).
\end{equation*} 
The \emph{incidence matrix} of an oriented hypergraph $G$ is the $V \times E$ matrix $\mathbf{H}_{G}$ where the $(v,e)$-entry is the sum of $\sigma(i)$ for each $i \in I$ such that $\varsigma (i)=v$ and $\omega (i)=e$. The \emph{adjacency matrix} $\mathbf{A}_{G} $ of an oriented hypergraph $G$ is the $V\times V$ matrix whose $(u,w)$-entry is the sum of $\sgn(q(\check{P}_{1}))$ for all incidence monic maps $q:\check{P}_{1}\rightarrow G$ with $q(\varsigma(i_1))=u$ and $q(\varsigma(i_2))=w$. The \emph{degree matrix} of an oriented hypergraph $G$ is the $V\times V$ diagonal matrix whose $(v,v)$-entry is the sum of all non-incidence-monic maps $p:\check{P}_{1}\rightarrow G$ with $p(\varsigma(i_1))=p(\varsigma(i_2)=v$. The \emph{Laplacian matrix of $G$} is defined as $\mathbf{L}_{G}:=\mathbf{H}_{G} \mathbf{H}_{G}^{T}=\mathbf{D}_{G}-\mathbf{A}_{G}$ for all oriented hypergraphs. These definitions are a result of the path-embedding weak-walk theorem that was implied in \cite{OHHar, AH1} and collected in \cite{OHSachs}.

\begin{thm}[\cite{OHSachs}, Theorem 2.3.1]
\label{t:WWT}
Let $G$ be an oriented hypergraph.

\begin{enumerate}
\item The $(v,w)$-entry of $\mathbf{D}_{G}$ is the number of strictly weak, weak walks, of length $1$ from $v$ to $w$. That is, the number of backsteps from $v$ to $w$.

\item The $(v,w)$-entry of $\mathbf{A}_{G}$ is the number of positive (non-weak) walks of length $1$ from $v$ to $w$ minus the number of negative (non-weak) walks of length $1$ from $v$ to $w$.

\item The $(v,w)$-entry of $-\mathbf{L}_{G}$ is the number of positive weak walks of length $1$ from $v$ to $w$ minus the number of negative weak walks of length $1$ from $v$ to $w$.
\end{enumerate}
\label{WWL}
\end{thm}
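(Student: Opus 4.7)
The plan is to reduce each of the three statements to its defining quantities and to verify that the sign conventions line up with the weak-walk formula evaluated at $n = 2$.

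For part (1), $\mathbf{D}_G$ is diagonal by construction, and its $(v,v)$-entry is literally the number of non-incidence-monic maps $p \colon \check{P}_1 \to G$ with $p(\varsigma(i_1)) = p(\varsigma(i_2)) = v$. Such a map forces both incidences of $\check{P}_1$ to land on a single incidence of $G$, so the head and tail agree; this is exactly a backstep (a strictly weak walk of length 1) at $v$, and off-diagonal entries vanish because no backstep can have distinct head and tail. Part (2) is equally immediate from the definition: $\mathbf{A}_G(v,w)$ is already written as $\sum \sgn\bigl(q(\check{P}_1)\bigr)$ over incidence-monic $q$, and the incidence-monic maps $\check{P}_1 \to G$ are precisely the directed adjacencies, i.e.\ the non-weak walks of length 1; grouping terms by the value of $\sgn$ yields (positive) minus (negative).

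For part (3), I would expand $\mathbf{L}_G = \mathbf{H}_G \mathbf{H}_G^T$ term by term. The $(v,w)$-entry becomes
$$\sum_{e \in \check{E}} \left(\sum_{\substack{i \in I \\ \varsigma(i) = v,\, \omega(i) = e}} \sigma(i)\right)\left(\sum_{\substack{j \in I \\ \varsigma(j) = w,\, \omega(j) = e}} \sigma(j)\right) = \sum_{(i,j)} \sigma(i)\sigma(j),$$
where the right-hand sum ranges over ordered pairs of incidences sharing a common edge with $\varsigma(i) = v$ and $\varsigma(j) = w$. Each such pair $(i,j)$ corresponds bijectively to a length-1 weak walk from $v$ to $w$ (either a backstep if $i = j$ or an incidence-monic map if $i \neq j$). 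Applying the sign formula with $n = 2$ gives $\sgn(W) = (-1)^{\lfloor 2/2 \rfloor} \sigma(i)\sigma(j) = -\sigma(i)\sigma(j)$, so the displayed sum equals $-\sum_W \sgn(W)$. Negating yields $-\mathbf{L}_G(v,w) = \sum_W \sgn(W) = (\text{positive}) - (\text{negative})$, as claimed.

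The main obstacle is really just sign bookkeeping: one must make sure the $(-1)^{\lfloor n/2 \rfloor}$ factor in $\sgn$ is correctly responsible for flipping the naive quadratic form $\sigma(i)\sigma(j)$ into $-\sgn(W)$, and one should cross-check the identity $\mathbf{L}_G = \mathbf{D}_G - \mathbf{A}_G$ against the partition of the pair-sum into the $i = j$ (diagonal, contributing $\sigma(i)^2 = +1$ per backstep) and $i \neq j$ (off-diagonal, contributing $\sigma(i)\sigma(j) = -\sgn(q)$ per adjacency) pieces. Once this calibration is fixed, the three statements are essentially translations of the matrix definitions into the weak-walk vocabulary.
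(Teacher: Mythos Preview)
Your argument is correct: parts (1) and (2) are immediate restatements of the definitions given in the paper for $\mathbf{D}_G$ and $\mathbf{A}_G$, and your expansion of $\mathbf{H}_G\mathbf{H}_G^T$ into a sum over incidence pairs $(i,j)$ sharing a common edge, together with the computation $\sgn(W)=(-1)^{\lfloor 2/2\rfloor}\sigma(i)\sigma(j)=-\sigma(i)\sigma(j)$, is exactly the right bookkeeping for part (3). The consistency check $\mathbf{L}_G=\mathbf{D}_G-\mathbf{A}_G$ via the $i=j$ versus $i\neq j$ split is also correct and worth including.

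One point of comparison: the paper does not actually supply a proof of this theorem. It is stated as a citation of \cite{OHSachs}, Theorem 2.3.1 (with antecedents in \cite{OHHar,AH1}), and is used as background for the weak-walk interpretation of the Laplacian. So there is no in-paper argument to compare against; your direct expansion of $\mathbf{H}_G\mathbf{H}_G^T$ is the natural verification and is in the same spirit as the original sources.
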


Moreover, from \cite{OHHar} this holds for $k^{\text{th}}$ powers of these matrices via paths of length $k$. Combining the incidence matrices of $G$ and its dual $G^{\#}$ into a single $\left( \lvert V \rvert + \lvert E \rvert \right) \times \left( \lvert V \rvert + \lvert E \rvert \right)$ incidence matrix define the 
\emph{complete incidence matrix} as
\begin{align*}
\overline{\mathbf{H}}_{G}:=\left[ 
\begin{array}{cc}
\mathbf{0} & \mathbf{H}_{G} \\ 
\mathbf{H}_{G^{\#}} & \mathbf{0}
\end{array}
\right] =\left[ 
\begin{array}{cc}
\mathbf{0} & \mathbf{H}_{G} \\ 
\mathbf{H}_{G}^{T} & \mathbf{0}
\end{array}
\right],
\end{align*}
and the \emph{complete Laplacian}  as
\begin{align*}
\overline{\mathbf{L}}_{G}:=\overline{\mathbf{H}}_{G}^{2}=\left[ 
\begin{array}{cc}
\mathbf{L}_{G} & \mathbf{0} \\ 
\mathbf{0} & \mathbf{L}_{G^{\#}}
\end{array}
\right].
\end{align*}
Thus, the entries of $\overline{\mathbf{L}}_{G}$ are determined by the morphisms $\cat{R}(\check{P}_{k/2},G)$ and $\cat{R}(\check{P}^{\#}_{k/2},G)$, and the weak-walk theorem can be restated as follows.

\begin{thm}
Let $G$ be an oriented hypergraph and $k\in \mathbb{Z}_{\geq 0}$, $\overline{\mathbf{H}}_{G}^{k}=(-1)^{\lfloor k/2\rfloor }\overline{\mathbf{L}}_{G}^{k/2}$. Moreover, the incidence signing function for objects in $\cat{R}$ is edge signing in $\cat{M}$ under $U\Upsilon^{\diamond},$ thus these matrices are also equal to the standard signed graphic adjacency matrix $\mathbf{A}_{U\Upsilon^{\diamond}(G)}^{k}$ with the inherited edge signing function.
\label{t:LapExpMakesLap}
\end{thm}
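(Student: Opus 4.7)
The plan is to prove the two assertions in turn. For the matrix identity $\overline{\mathbf{H}}_G^k = (-1)^{\lfloor k/2 \rfloor} \overline{\mathbf{L}}_G^{k/2}$, the strategy is to proceed by induction on $k$, using the base relation $\overline{\mathbf{H}}_G^2 = \overline{\mathbf{L}}_G$ built into the definition of $\overline{\mathbf{L}}_G$. The block structure of $\overline{\mathbf{H}}_G$ immediately forces even powers to be block diagonal with blocks $\mathbf{L}_G$ and $\mathbf{L}_{G^{\#}}$, and odd powers to be block anti-diagonal with $\mathbf{H}_G$-valued blocks; this keeps track of which half-power of $\overline{\mathbf{L}}_G$ is in play at each step. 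The recursion $\overline{\mathbf{H}}_G^{k} = \overline{\mathbf{H}}_G \cdot \overline{\mathbf{H}}_G^{k-1}$ reduces the claim at $k$ to the claim at $k-1$, and the sign $(-1)^{\lfloor k/2 \rfloor}$ is precisely the one forced by the definition $\sgn(W) = (-1)^{\lfloor n/2 \rfloor}\prod_h \sigma(i_h)$ of the sign of a weak walk.

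An alternative, more combinatorial route is to argue using Theorem \ref{t:WWT} and its extension to arbitrary lengths from \cite{OHHar}. The $(v,w)$-entry of $\overline{\mathbf{H}}_G^k$ expands as a sum over all length-$k$ incidence chains from $v$ to $w$ of products $\prod_{h=1}^k \sigma(i_h)$. Each such chain is precisely a directed weak walk of length $k/2$, and factoring out the prefactor $(-1)^{\lfloor k/2 \rfloor}$ converts the raw product into the walk sign itself; the resulting signed count is the corresponding entry of $\overline{\mathbf{L}}_G^{k/2}$ by the iterated weak-walk theorem. For odd $k$, I would interpret $\overline{\mathbf{L}}_G^{k/2}$ through the formal convention $\overline{\mathbf{L}}_G^{1/2} := \overline{\mathbf{H}}_G$, consistent with $\overline{\mathbf{H}}_G^2 = \overline{\mathbf{L}}_G$, so the identity specializes correctly at the half-integer exponents in which the paper is interested.

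For the bipartite translation, the plan is to verify the claim at the level of objects and then read off matrices. Applying $\Upsilon^{\diamond}$ to $G$ yields a quiver on vertex set $\check{V}(G) \sqcup \check{E}(G)$ whose arcs are exactly the incidences of $G$, with source $\varsigma(i)$ and target $\omega(i)$; applying $U$ undirects these arcs, and the incidence signing $\sigma : I \to \{\pm 1\}$ transports directly to an edge signing of the multigraph $U\Upsilon^{\diamond}(G)$. Writing its signed adjacency matrix in block form with respect to the bipartition $\check{V} \sqcup \check{E}$ gives exactly $\overline{\mathbf{H}}_G$, because edges only run between $\check{V}$ and $\check{E}$ with multiplicities and signs encoded by $\mathbf{H}_G$ and $\mathbf{H}_G^T$. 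Since $k$-th powers of signed adjacency matrices count signed walks of length $k$, and walks in a bipartite graph biject with alternating sequences of vertices, edges, and incidences (i.e., directed weak walks of length $k/2$ in $G$), the identity $\mathbf{A}_{U\Upsilon^{\diamond}(G)}^k = \overline{\mathbf{H}}_G^k$ follows, and the second assertion is obtained by combining with the first.

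The main obstacle I anticipate is the careful bookkeeping of the sign $(-1)^{\lfloor k/2 \rfloor}$ in the first part, together with a coherent interpretation of $\overline{\mathbf{L}}_G^{k/2}$ at odd $k$; once the convention $\overline{\mathbf{L}}_G^{1/2} = \overline{\mathbf{H}}_G$ is installed, both halves of the theorem reduce to routine checks. The second part is essentially a categorical translation once the explicit description of $U\Upsilon^{\diamond}$ from \cite{IH1} is in hand, with the sign transport from incidences to edges being the key compatibility that makes the bipartite signed adjacency match $\overline{\mathbf{H}}_G$ exactly.
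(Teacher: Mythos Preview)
The paper does not supply a standalone proof of this theorem; it is presented as a direct restatement of the weak-walk theorem (Theorem~\ref{t:WWT}) and its extension to $k$th powers from \cite{OHHar,OHSachs}, together with the observation that $U\Upsilon^{\diamond}$ sends incidence signs to edge signs in the bipartite representation. Your second and third paragraphs are precisely this implicit argument made explicit: you expand the $(v,w)$-entry of $\overline{\mathbf{H}}_G^k$ as a sum of incidence-sign products over weak walks, identify the $(-1)^{\lfloor k/2\rfloor}$ prefactor with the one in the walk-sign definition, and invoke the cited weak-walk theorem for the Laplacian side; then you read off $\overline{\mathbf{H}}_G$ as the signed adjacency of the bipartite graph $U\Upsilon^{\diamond}(G)$ in block form. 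This matches the paper's intended justification.

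One caution about your first paragraph: the purely algebraic induction from the literal identity $\overline{\mathbf{L}}_G=\overline{\mathbf{H}}_G^2$ does not by itself produce the sign $(-1)^{\lfloor k/2\rfloor}$, since at $k=2$ it would yield $\overline{\mathbf{H}}_G^2=\overline{\mathbf{L}}_G$ rather than $-\overline{\mathbf{L}}_G$. The sign is a combinatorial artifact of the walk-sign convention, not of matrix multiplication, so the inductive route only works after you adopt the walk-count interpretation of $\overline{\mathbf{L}}_G^{k/2}$ that you describe in your second paragraph. In other words, your combinatorial route is the actual argument, and the induction is not an independent proof but a repackaging of it; you should lead with the weak-walk expansion rather than present the induction as self-contained.
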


In Section \ref{sec:LapProd} we prove that the Laplacian is simply an evaluation of the Laplacian exponential at paths. Specifically, the 
vertices, edges, and incidences are shown to be the following $\cat{R}$-homs:
\begin{align*}
\check{V}[\check{P}_{k/2},G]_{L}&=\cat{R}(\check{P}_{k/2},G), \\
\check{E}[\check{P}_{k/2},G]_{L}&=\cat{R}\left(\check{P}_{k/2}^{\#},G\right)\cong\cat{R}\left(\check{P}_{k/2},G^{\#}\right), \\
I\left[\check{P}_{k/2},G\right]_{L}
&=\cat{R}\left(\check{P}_{k/2}\blacksquare \check{P}_{1/2},G\right),
\end{align*}
where the incidences are determined by the dualized-ladder mapping of $\check{P}_{k/2}\blacksquare \check{P}_{1/2}$.

\section{Box Products for Graph-like Categories}
\label{sec:the4boxes}

In this section we provide a short categorical development of box products and exponentials on the categories of quivers $\mathfrak{Q}$, set-system hypergraphs $\mathfrak{H}$, multigraphs $\mathfrak{M}$, and incidence hypergraphs $\mathfrak{R}$. These have direct applications to hom complexes of graphs and graph products \cite{DOCHTERMANN2009490, DOCHTERMANN2009180, handbookpg, Knauer1990,KNAUER2001592}; the simple $\mathfrak{M}$ box product matches, and the $\mathfrak{H}$ exponential generalizes, the ``diamond products'' in \cite{DOCHTERMANN2009490,knauer-book}. The structure maps for each monoidal structure appear in Section \ref{StrMaps}. The necessary left adjoints of appear throughout each subsection. The left adjoint of the vertex functors at a singleton set ($\vec{V}^{\diamond}(\{1\})$, ${V}^{\diamond}(\{1\})$, $\check{V}^{\diamond}(\{1\})$) are just an isolated vertex in their category, the left adjoint of the edge functors at a singleton set are single edges (with necessary vertex structure). However, as shown in \cite{IH1}, the edge functors for multigraphs and set-system hypergraphs do not have left adjoints, which is caused by the covariant power-set functor in their construction and leads to the cartesian structure failing.  Moreover, this failure appears to contribute to the peculiarities in the box exponential.

\subsection{Box Products for Quivers}
\label{CatQ}
Recall, that a quiver $Q$ consists of a set of vertices $\vec{V}(Q)$, a set of edges $\vec{E}(Q)$, and a source and target map $\sigma_{Q}$ and $\tau_{Q}$, respectively. The action of the box product on quivers is well known in sources such as \cite{handbookgt,handbookpg}.  This action can be naturally extended to quiver homomorphisms to create a symmetric monoidal product with the structure maps from Section \ref{StrMaps}.

\begin{defn}[Box product]
Given quivers $Q$ and $P$, define the quiver $Q\vec{\Box}P$ by
\begin{enumerate}
\item $\vec{V}\left(Q\vec{\Box}P\right):=\vec{V}(Q)\times\vec{V}(P)$,
\item $\vec{E}\left(Q\vec{\Box}P\right):=\left(\{1\}\times\vec{E}(Q)\times\vec{V}(P)\right)\cup\left(\{2\}\times\vec{V}(Q)\times\vec{E}(P)\right)$,
\item $\sigma_{Q\vec{\Box}P}(n,x,y):=\left\{\begin{array}{cc}
\left(\sigma_{Q}(x),y\right)	&	n=1,\\
\left(x,\sigma_{P}(y)\right)	&	n=2,\\
\end{array}\right.$
\item $\tau_{Q\vec{\Box}P}(n,x,y):=\left\{\begin{array}{cc}
\left(\tau_{Q}(x),y\right)	&	n=1,\\
\left(x,\tau_{P}(y)\right)	&	n=2.\\
\end{array}\right.$
\end{enumerate}
Given quiver homomorphisms $\xymatrix{Q_{1}\ar[r]^{\phi} & Q_{2}}$ and $\xymatrix{P_{1}\ar[r]^{\psi} & P_{2}}$, define $\xymatrix{Q_{1}\vec{\Box}P_{1}\ar[r]^{\phi\vec{\Box}\psi} & Q_{2}\vec{\Box}P_{2}}$ by
\begin{enumerate}
\item $\vec{V}\left(\phi\vec{\Box}\psi\right)(v,w):=\left(\vec{V}(\phi)(v),\vec{V}(\psi)(w)\right)$,
\item $\vec{E}\left(\phi\vec{\Box}\psi\right)(n,x,y):=\left\{\begin{array}{cc}
\left(1,\vec{E}(\phi)(x),\vec{V}(\psi)(y)\right)	&	n=1,\\
\left(2,\vec{V}(\phi)(x),\vec{E}(\psi)(y)\right)	&	n=2.\\
\end{array}\right.$
\end{enumerate}
\end{defn}

\begin{ex}
We calculate the object formed by the $\cat{Q}$-box product, consider the single directed edge $\vec{P}_1\cong\vec{E}^{\diamond}(\{1\})$ (the left adjoint consists of disjoint edges on the given set). The quiver box product of two directed edges appears in Figure \ref{fig:QBoxProd}.
\begin{figure}[H]
    \centering
    \includegraphics{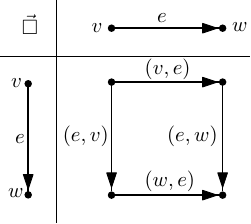}\\
    \caption{The quiver box product of $\vec{P}_1\vec{\Box}\vec{P}_1$.}
    \label{fig:QBoxProd}
\end{figure}
It is unsurprising that the directed box product is related to the canonical box product on graphs where the undirecting functor $U$ returns Figure \ref{fig:PrismLadder} (Left).
\end{ex}

The construction of the quiver box exponential $\left[Q_{1},Q_{2}\right]_{B}$ is straight forward via the following isomorphisms, much like the categorical exponential from \cite[Definition 3.49]{IH1}.
\begin{align*}
\vec{V}\left[Q_{1},Q_{2}\right]_{B}
&\cong\Set\left(\{1\},\vec{V}\left[Q_{1},Q_{2}\right]_{B}\right)
\cong\cat{Q}\left(\vec{V}^{\diamond}(\{1\}),\left[Q_{1},Q_{2}\right]_{B}\right) \\
&\cong\cat{Q}\left(Q_{1}\vec{\Box}\vec{V}^{\diamond}(\{1\}),Q_{2}\right)
\cong\cat{Q}\left(Q_{1},Q_{2}\right), \\
\vec{E}\left[Q_{1},Q_{2}\right]_{B}
&\cong\Set\left(\{1\},\vec{E}\left[Q_{1},Q_{2}\right]_{B}\right)
\cong\cat{Q}\left(\vec{E}^{\diamond}(\{1\}),\left[Q_{1},Q_{2}\right]_{B}\right)
\cong\cat{Q}\left(Q_{1}\vec{\Box}\vec{E}^{\diamond}(\{1\}),Q_{2}\right).
\end{align*}

For the source and target maps, the Yoneda embedding will be helpful.  This important functor arises naturally from the presheaf structure of $\cat{Q}$ as seen in \cite[Example I.1.4.3.a]{borceux}, and the characterization below follows from direct calculation.  For context, let $\cat{E}$ be the finite category drawn below.
\[\xymatrix{
1\ar@/^/[r]^{s}\ar@/_/[r]_{t}	&	0\\
}\]
Then, $\cat{Q}=\Set^{\cat{E}}$, and $\xymatrix{\Set & \cat{Q}\ar[l]_(0.4){\vec{V}}\ar[r]^(0.4){\vec{E}} & \Set}$ are the evaluation functors at $0$ and $1$, respectively. 

\begin{prop}[Yoneda functor]
Let $Y_{\cat{Q}}:\cat{E}^{\mathrm{op}}\to\cat{Q}$ be the Yoneda embedding.  Then, $Y_{\cat{Q}}(0)\cong\vec{V}^{\diamond}(\{1\})$ and $Y_{\cat{Q}}(1)\cong\vec{E}^{\diamond}(\{1\})$.  Moreover, $\xymatrix{Y_{\cat{Q}}(0)\ar@/^/[r]^{Y_{\cat{Q}}(s)}\ar@/_/[r]_{Y_{\cat{Q}}(t)} & Y_{\cat{Q}}(1)}\in\cat{Q}$ are determined uniquely by $\vec{V}Y_{\cat{Q}}(s)(1)=(0,1)$ and $\vec{V}Y_{\cat{Q}}(t)(1)=(1,1)$, mapping to the tail and head of the single edge, respectively.
\end{prop}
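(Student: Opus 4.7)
The plan is to compute the Yoneda embedding $Y_{\cat{Q}}\colon \cat{E}^{\mathrm{op}}\to\cat{Q}=\Set^{\cat{E}}$ pointwise by using the defining formula $Y_{\cat{Q}}(X) = \cat{E}(X,-)$, viewed as a functor $\cat{E}\to\Set$, i.e., an object of $\cat{Q}$. The identifications $Y_{\cat{Q}}(0)\cong\vec{V}^{\diamond}(\{1\})$ and $Y_{\cat{Q}}(1)\cong\vec{E}^{\diamond}(\{1\})$ will follow from inspecting hom-sets in the tiny category $\cat{E}$, while the formulae for $Y_{\cat{Q}}(s)$ and $Y_{\cat{Q}}(t)$ will follow from the fact that $Y_{\cat{Q}}$ acts as pre-composition on morphisms.

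First I would evaluate at $0$: the vertex set is $\vec{V}Y_{\cat{Q}}(0) = \cat{E}(0,0) = \{\mathrm{id}_{0}\}$ and the edge set is $\vec{E}Y_{\cat{Q}}(0) = \cat{E}(0,1) = \emptyset$, since $\cat{E}$ has no morphisms from $0$ to $1$. Hence $Y_{\cat{Q}}(0)$ is an isolated vertex, i.e., $\vec{V}^{\diamond}(\{1\})$. Evaluating at $1$: the vertex set is $\cat{E}(1,0) = \{s,t\}$ and the edge set is $\cat{E}(1,1) = \{\mathrm{id}_{1}\}$. The source map of $Y_{\cat{Q}}(1)$ is $Y_{\cat{Q}}(1)(s)$, i.e., post-composition with $s$, sending $\mathrm{id}_{1}\mapsto s$; likewise the target sends $\mathrm{id}_{1}\mapsto t$. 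Since $s\ne t$ in $\cat{E}$, this is a quiver with two distinct vertices joined by a single edge, matching $\vec{E}^{\diamond}(\{1\})$ under the identification $s\leftrightarrow(0,1)$ and $t\leftrightarrow(1,1)$.

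For the structure maps, note that $Y_{\cat{Q}}(s)\colon Y_{\cat{Q}}(0)\to Y_{\cat{Q}}(1)$ is by definition the natural transformation $\cat{E}(s,-)$ given by pre-composition with $s$. On vertices this sends $\mathrm{id}_{0}\in\cat{E}(0,0)$ to $\mathrm{id}_{0}\circ s = s\in\cat{E}(1,0)$. Under the isomorphisms constructed above, the distinguished element $1$ of $\vec{V}^{\diamond}(\{1\})$ corresponds to $\mathrm{id}_{0}$, and $s$ corresponds to $(0,1)$, yielding $\vec{V}Y_{\cat{Q}}(s)(1)=(0,1)$; the calculation for $t$ is symmetric. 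Uniqueness is immediate from the adjunction $\vec{V}^{\diamond}\dashv\vec{V}$: a $\cat{Q}$-morphism out of $\vec{V}^{\diamond}(\{1\})$ is determined by the image of its unique vertex.

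The main obstacle is purely bookkeeping around the opposite-category convention. One must confirm that $s\colon 1\to 0$ in $\cat{E}$ indeed induces a morphism from $Y_{\cat{Q}}(0)$ to $Y_{\cat{Q}}(1)$ in $\cat{Q}$ (rather than the reverse direction), and then verify that this map lands on the tail vertex rather than the head, so that the labelling agrees with the conventional source/target structure of $\vec{E}^{\diamond}(\{1\})$. Beyond that, every step is a direct computation inside the four-element category $\cat{E}$.
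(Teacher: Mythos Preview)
Your proposal is correct and is precisely the ``direct calculation'' the paper alludes to without spelling out; the paper offers no detailed proof here, so there is nothing further to compare.
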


Now, the box exponential and its universal property can be clearly stated and proven, complete with evaluation morphisms.  Please note the use of the right unitor and the Yoneda map in the source and target functions.

\begin{defn}[Box exponential]
Given quivers $Q_{1}$ and $Q_{2}$, define the quiver $\left[Q_{1},Q_{2}\right]_{B}$ by
\begin{enumerate}
\item $\vec{V}\left[Q_{1},Q_{2}\right]_{B}:=\cat{Q}\left(Q_{1},Q_{2}\right)$,
\item $\vec{E}\left[Q_{1},Q_{2}\right]_{B}:=\cat{Q}\left(Q_{1}\vec{\Box}\vec{E}^{\diamond}(\{1\}),Q_{2}\right)$,
\item $\sigma_{\left[Q_{1},Q_{2}\right]_{B}}(\psi):=\psi\circ\left(Q_{1}\vec{\Box}Y_{\cat{Q}}(s)\right)\circ\vec{r}^{-1}_{Q_{1}}$,
\item $\tau_{\left[Q_{1},Q_{2}\right]_{B}}(\psi):=\psi\circ\left(Q_{1}\vec{\Box}Y_{\cat{Q}}(t)\right)\circ\vec{r}^{-1}_{Q_{1}}$.
\end{enumerate}
Define the quiver homomorphism $\xymatrix{Q_{1}\vec{\Box}\left[Q_{1},Q_{2}\right]_{B}\ar[r]^(0.7){\mathrm{bev}_{Q_{2}}^{Q_{1}}} & Q_{2}}$ by
\begin{enumerate}
\item $\vec{V}\left(\mathrm{bev}_{Q_{2}}^{Q_{1}}\right)(v,\phi):=\vec{V}(\phi)(v)$,
\item $\vec{E}\left(\mathrm{bev}_{Q_{2}}^{Q_{1}}\right)(n,x,\psi):=\left\{\begin{array}{cc}
\vec{E}(\psi)(x)	&	n=1,\\
\vec{E}(\psi)(2,x,1)	&	n=2.\\
\end{array}\right.$
\end{enumerate}
\end{defn}

Observe that the edges are determined by the $\cat{Q}$-morphisms of a $Q_1$-prism, $\cat{Q}\left(Q_{1}\vec{\Box}\vec{E}^{\diamond}(\{1\}),Q_{2}\right)$.

\begin{ex}
Again, we are concerned with the objects produced by the exponential. Consider the quiver box exponential of a $2$-cycle to a $1$-edge. The vertex set is determined by maps from $\vec{P}_1$ to $\vec{C}_2$, which is uniquely determined by the image of the edge. The edge set is determined by maps from $\vec{P}_1\vec{\Box}\vec{E}^{\diamond}(\{1\}) = \vec{P}_1\vec{\Box}\vec{P}_1$ to $\vec{C}_2$, which is uniquely determined by the image of $(e,0)$.
\begin{figure}[H]
    \centering
    \includegraphics{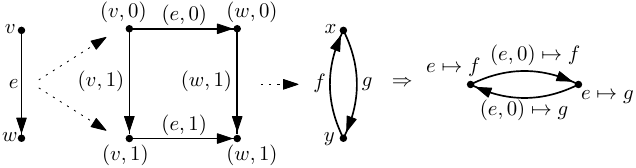}\\
    \caption{The quiver box exponential $[\vec{P}_1,\vec{C}_2]_B$ as determined by their maps.}
    \label{fig:QBoxExp}
\end{figure}

\end{ex}

\begin{thm}[Universal property]
Given a quiver homomorphism $\xymatrix{Q_{1}\vec{\Box}K\ar[r]^(0.6){\phi} & Q_{2}}$, there is a unique quiver homomorphism $\xymatrix{K\ar[r]^(0.3){\hat{\phi}} & \left[Q_{1},Q_{2}\right]_{B}}$ such that $\mathrm{bev}_{Q_{2}}^{Q_{1}}\circ\left(Q_{1}\vec{\Box}\hat{\phi}\right)=\phi$.
\end{thm}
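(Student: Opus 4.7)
The strategy is a currying argument: given $\phi:Q_{1}\vec{\Box}K\to Q_{2}$, I read off the forced definition of $\hat{\phi}:K\to\left[Q_{1},Q_{2}\right]_{B}$ from the triangle identity, verify that it assembles into a quiver homomorphism, and observe that existence and uniqueness drop out of the same calculation.

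For each $w\in\vec{V}(K)$, define $\hat{\phi}_{w}:Q_{1}\to Q_{2}$ by $\vec{V}(\hat{\phi}_{w})(v):=\vec{V}(\phi)(v,w)$ and $\vec{E}(\hat{\phi}_{w})(x):=\vec{E}(\phi)(1,x,w)$; this is a quiver homomorphism because $\sigma_{Q_{1}\vec{\Box}K}(1,x,w)=\left(\sigma_{Q_{1}}(x),w\right)$, and similarly for $\tau$. For each $f\in\vec{E}(K)$, define $\hat{\phi}_{f}:Q_{1}\vec{\Box}\vec{E}^{\diamond}(\{1\})\to Q_{2}$ by sending the source-slice copy of $Q_{1}$ (the image of $(Q_{1}\vec{\Box}Y_{\cat{Q}}(s))\circ\vec{r}^{-1}_{Q_{1}}$) via $\hat{\phi}_{\sigma_{K}(f)}$, the target-slice copy via $\hat{\phi}_{\tau_{K}(f)}$, and each crossing edge $(2,v,1)$ to $\vec{E}(\phi)(2,v,f)$; compatibility on a crossing edge reduces to $\sigma_{Q_{1}\vec{\Box}K}(2,v,f)=\left(v,\sigma_{K}(f)\right)$ and its target-side dual. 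Finally, declare $\vec{V}(\hat{\phi})(w):=\hat{\phi}_{w}$ and $\vec{E}(\hat{\phi})(f):=\hat{\phi}_{f}$.

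The main obstacle is verifying that $\hat{\phi}$ itself respects $\sigma$ and $\tau$, since $\sigma_{\left[Q_{1},Q_{2}\right]_{B}}$ and $\tau_{\left[Q_{1},Q_{2}\right]_{B}}$ are defined via the Yoneda embedding and the right unitor. Unpacking, $\sigma_{\left[Q_{1},Q_{2}\right]_{B}}(\hat{\phi}_{f})=\hat{\phi}_{f}\circ(Q_{1}\vec{\Box}Y_{\cat{Q}}(s))\circ\vec{r}^{-1}_{Q_{1}}$ is by construction exactly the restriction of $\hat{\phi}_{f}$ to the source slice, which equals $\hat{\phi}_{\sigma_{K}(f)}=\vec{V}(\hat{\phi})(\sigma_{K}(f))$; the target case is symmetric. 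With this in hand, the triangle $\mathrm{bev}_{Q_{2}}^{Q_{1}}\circ(Q_{1}\vec{\Box}\hat{\phi})=\phi$ follows by separately checking the two edge types of $Q_{1}\vec{\Box}K$: on $(1,x,w)$ it collapses to the definition of $\hat{\phi}_{w}$, while on $(2,v,f)$ it uses $\vec{E}(\mathrm{bev})(2,v,\hat{\phi}_{f})=\vec{E}(\hat{\phi}_{f})(2,v,1)=\vec{E}(\phi)(2,v,f)$. Uniqueness is forced by the same triangle: evaluating on $(v,w)$ and then on $(1,x,w)$ pins down $\vec{V}(\hat{\phi})(w)$, and evaluating on $(2,v,f)$ together with the source/target compatibility already established pins down $\vec{E}(\hat{\phi})(f)$.
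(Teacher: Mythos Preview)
Your proof is correct and takes essentially the same approach as the paper: the paper defines $\vec{V}(\hat{\phi})(v):=\phi\circ(Q_{1}\vec{\Box}\hat{\gamma}_{v})\circ\vec{r}_{Q_{1}}^{-1}$ and $\vec{E}(\hat{\phi})(e):=\phi\circ(Q_{1}\vec{\Box}\hat{\delta}_{e})$ using the adjoint maps $\hat{\gamma}_{v}$, $\hat{\delta}_{e}$ that pick out $v$ and $e$ in $K$, and your $\hat{\phi}_{w}$ and $\hat{\phi}_{f}$ are exactly these composites unpacked elementwise. The paper omits the verification that $\hat{\phi}$ is a quiver homomorphism, the triangle identity, and uniqueness, all of which you supply explicitly; your proof is thus a more detailed rendering of the same construction.
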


\begin{proof}

For $v\in\vec{V}(K)$, define $\gamma_{v}:\{1\}\to\vec{V}(K)$ by $\gamma_{v}(1):=v$.  There is a unique $\xymatrix{\vec{V}^{\diamond}(\{1\})\ar[r]^(0.6){\hat{\gamma}_{v}} & K}\in\cat{Q}$ such that $\vec{V}\left(\hat{\gamma}_{v}\right)=\gamma_{v}$.  For $e\in\vec{E}(K)$, define $\delta_{e}:\{1\}\to\vec{E}(K)$ by $\delta_{e}(1):=e$.  There is a unique $\xymatrix{\vec{E}^{\diamond}(\{1\})\ar[r]^(0.6){\hat{\delta}_{e}} & K}\in\cat{Q}$ such that $\vec{E}\left(\hat{\delta}_{e}\right)=\delta_{e}$.  Define $\xymatrix{K\ar[r]^(0.35){\hat{\phi}} & \left[Q_{1},Q_{2}\right]_{B}}\in\cat{Q}$ by $\vec{V}\left(\hat{\phi}\right)(v):=\phi\circ\left(Q_{1}\vec{\Box}\hat{\gamma}_{v}\right)\circ\vec{r}_{Q_{1}}^{-1}$, and $\vec{E}\left(\hat{\phi}\right)(e):=\phi\circ\left(Q_{1}\vec{\Box}\hat{\delta}_{e}\right)$.
\end{proof}

\subsection{Box Product for Set System Hypergraphs}
\label{CatH}
Recall, that a set system hypergraph $H$ consists of a set of vertices $V(H)$, a set of edges $E(H)$, and an endpoint map $\epsilon_{H}$. The box product for set-system hypergraphs is defined analogously to its quiver counterpart with monoidal structure in Section \ref{StrMapsH}.

\begin{defn}[Box product]
Given set-system hypergraphs $G$ and $H$, define the set-system hypergraph $G\Box H$ by
\begin{enumerate}
\item $V\left(G\Box H\right):=V(G)\times V(H)$,
\item $E\left(G\Box H\right):=\left(\{1\}\times E(G)\times V(H)\right)\cup\left(\{2\}\times V(G)\times E(H)\right)$,
\item $\epsilon_{G\Box H}(n,x,y):=\left\{\begin{array}{cc}
\epsilon_{G}(x)\times\{y\},	&	n=1,\\
\{x\}\times\epsilon_{H}(y),	&	n=2.\\
\end{array}\right.$
\end{enumerate}
Given set-system hypergraph homomorphisms $\xymatrix{G_{1}\ar[r]^{\phi} & G_{2}}$ and $\xymatrix{H_{1}\ar[r]^{\psi} & H_{2}}$, define the set-system homomorphism $\xymatrix{G_{1}\Box H_{1}\ar[r]^{\phi\Box\psi} & G_{2}\Box H_{2}}$ by
\begin{enumerate}
\item $V\left(\phi\Box\psi\right)(v,w):=\left(V(\phi)(v),V(\psi)(w)\right)$,
\item $E\left(\phi\Box\psi\right)(n,x,y):=\left\{\begin{array}{cc}
\left(1,E(\phi)(x),V(\psi)(y)\right),	&	n=1,\\
\left(2,V(\phi)(x),E(\psi)(y)\right),	&	n=2.\\
\end{array}\right.$
\end{enumerate}
\end{defn}

\begin{ex}\label{h-box}
As one of the names of the box product is the ``Cartesian'' product, and the set system box product behaves exactly as expected.
\begin{figure}[H]
    \centering
    \includegraphics{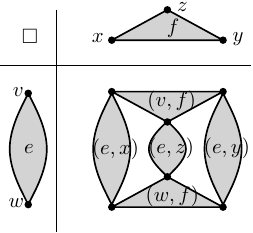}\\
    \caption{The set system box product of a $2$-edge and a $3$-edge.}
    \label{fig:HBoxProd}
\end{figure}
As \cite{IH1} discussed the cartesian monoidal structure of the category, we avoid the term ``Cartesian product'' to prevent confusion.
\end{ex}

The vertex functor $V$ for $\cat{H}$ admits a left adjoint, so the vertex set for the set-system box exponential is formed much like its quiver counterpart.
\begin{align*}
V[G,H]_{\beta}
&\cong\Set\left(\{1\},V[G,H]_{\beta}\right) \cong\cat{H}\left(V^{\diamond}(\{1\}),[G,H]_{\beta}\right) \\ &\cong\cat{H}\left(G\Box V^{\diamond}(\{1\}),H\right)
\cong\cat{H}\left(G,H\right).
\end{align*}

Unfortunately, the edge functor $E$ does not admit a left adjoint \cite[Lemma 2.217]{IH1}, so the edge set requires more careful consideration.  The counit $\beta\textrm{ev}_{H}^{G}$ of the exponential adjunction must be a set-system hypergraph homomorphism from $G\Box[G,H]_{\beta}$ to $H$, giving a map from $\{2\}\times V(G)\times E[G,H]_{\beta}$ to $E(H)$.  Thus, the edges of $[G,H]_{\beta}$ involve functions from $V(G)$ to $E(H)$.  Moreover, the homomorphism condition requires that the functions be colored by their endpoint set, giving the structure below.

\begin{defn}[Box exponential]\label{box-exponential-h}
Given set-system hypergraphs $G$ and $H$, define the hypergraph $[G,H]_{\beta}$ by
\begin{enumerate}
\item $V[G,H]_{\beta}:=\cat{H}(G,H)$ with evaluation map $V\left(\mathrm{\beta ev}_{H}^{G}\right):V\left(G\Box [G,H]_{\beta}\right)\to V(H)$ by $V\left(\mathrm{\beta ev}_{H}^{G}\right)(v,\phi):=V(\phi)(v)$,
\item $E[G,H]_{\beta}:=\left\{(A,g)\in\mathcal{P}V[G,H]_{\beta}\times\Set\left(V(G),E(H)\right):
\left(\epsilon_{H}\circ g\right)(v)=\right.$ \\ $\left.\mathcal{P}V\left(\mathrm{\beta ev}_{H}^{G}\right)\left(\{v\}\times A\right)
\forall v\in V(G)
\right\}$,
\item $\epsilon_{[G,H]_{\beta}}(A,g):=A$.
\end{enumerate}
Define the set-system hypergraph homomorphism $\xymatrix{G\Box[G,H]_{\beta}\ar[r]^(0.7){\mathrm{\beta ev}_{H}^{G}} & H}$ by
\begin{enumerate}
\item $V\left(\mathrm{\beta ev}_{H}^{G}\right)(v,\phi):=V(\phi)(v)$, $E\left(\mathrm{\beta ev}_{H}^{G}\right)(1,e,\phi):=E(\phi)(e)$,
\item $E\left(\mathrm{\beta ev}_{H}^{G}\right)\left(2,v,(A,g)\right):=g(v)$.
\end{enumerate}
\end{defn}

\begin{ex}
The set-system box exponential of a $2$-cycle to a $1$-edge is rather messy. While the vertex set consists of the standard $2^2$ vertices, the edge set contains $2^2$ functions colored by sets $A$ satisfying $\left\{V(\phi)(z):\forall\phi\in A\right\}=\{x,y\}$ for $z=v,w$. There are a total of eight $2$-edges, sixteen $3$-edges, and four $4$-edges, all in sets of four parallel edges. 
\begin{figure}[H]
    \centering
    \includegraphics{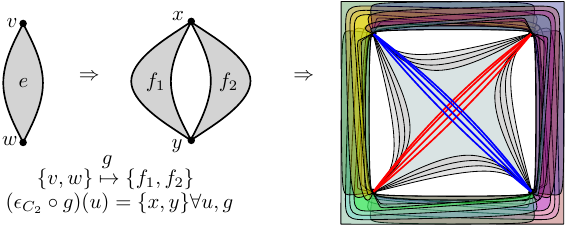}\\
    \caption{The set-system box exponential $[{P}_1,{C}_2]_{\beta}$ as determined by their maps.}
    \label{fig:HBoxExp}
\end{figure}
\end{ex}

\begin{thm}[Universal property]
Given a set-system hypergraph homomorphism $\xymatrix{G\Box K\ar[r]^(0.6){\phi} & H}$, there is a unique set-system hypergraph homomorphism $\xymatrix{K\ar[r]^(0.4){\hat{\phi}} & [G,H]_{\beta}}$ such that $\mathrm{\beta ev}_{H}^{G}\circ\left(G\Box\hat{\phi}\right)=\phi$.
\end{thm}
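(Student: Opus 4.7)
The plan is to mirror the quiver proof structurally, using the left adjoint $V^{\diamond}$ of the vertex functor to pin down $V\left(\hat{\phi}\right)$ pointwise, but to build the edge map $E\left(\hat{\phi}\right)$ by hand since $E$ admits no left adjoint on $\cat{H}$. Because $V[G,H]_{\beta}=\cat{H}(G,H)$, the vertex assignment amounts to producing one set-system hypergraph homomorphism $G\to H$ for each $w\in V(K)$; because the elements of $E[G,H]_{\beta}$ are the constrained pairs $(A,g)$ from Definition \ref{box-exponential-h}, the edge assignment must simultaneously produce both a subset $A\subseteq\cat{H}(G,H)$ and a function $g\colon V(G)\to E(H)$ satisfying the coloring condition, and these data must fit together to form a $\cat{H}$-morphism out of $K$.

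For each $w\in V(K)$, I would let $\hat{\gamma}_{w}\colon V^{\diamond}(\{1\})\to K$ be the unique morphism whose vertex map sends $1$ to $w$, and define $V\left(\hat{\phi}\right)(w)$ to be the $\cat{H}$-hom $G\to H$ that carries $v\in V(G)$ to $V(\phi)(v,w)$ and $x\in E(G)$ to $E(\phi)(1,x,w)$; compatibility with the endpoint maps follows directly from $\phi$ being a $\cat{H}$-morphism. For each $e\in E(K)$, I would then set
\[
E\left(\hat{\phi}\right)(e):=\left(A_{e},g_{e}\right),\qquad A_{e}:=V\left(\hat{\phi}\right)\!\left(\epsilon_{K}(e)\right),\qquad g_{e}(v):=E(\phi)(2,v,e).
\]
The crux is verifying $(A_{e},g_{e})\in E[G,H]_{\beta}$: on one side $\epsilon_{H}(g_{e}(v))=V(\phi)\!\left(\{v\}\times\epsilon_{K}(e)\right)$ since $\phi$ respects endpoints, and on the other side $\mathcal{P}V\!\left(\mathrm{\beta ev}_{H}^{G}\right)(\{v\}\times A_{e})=\{V(\psi)(v):\psi\in A_{e}\}$ unpacks via the definition of $A_{e}$ and of $V(\hat{\phi})$ to exactly the same set. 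Since $\epsilon_{[G,H]_{\beta}}(A_{e},g_{e})=A_{e}=V(\hat{\phi})(\epsilon_{K}(e))$, the resulting $\hat{\phi}$ is a genuine $\cat{H}$-morphism.

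The counit equation $\mathrm{\beta ev}_{H}^{G}\circ\left(G\Box\hat{\phi}\right)=\phi$ then reduces to three routine unfoldings, one on vertices $(v,w)$ and one on each type of edge $(1,x,w)$ and $(2,v,e)$. Uniqueness comes in three forced steps: any competing $\hat{\phi}'$ has its vertex map determined on both vertices and $1$-type edges of $G$ by evaluating the counit equation, which recovers the formula for $V(\hat{\phi}')(w)$; the $g$-component of $E(\hat{\phi}')(e)$ is forced on each $v$ by evaluating the counit equation on $(2,v,e)$; and the $A$-component is then forced by the $\cat{H}$-morphism condition $\epsilon_{[G,H]_{\beta}}\left(E(\hat{\phi}')(e)\right)=V(\hat{\phi}')(\epsilon_{K}(e))$. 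The principal obstacle is the edge step: verifying the coloring constraint defining $E[G,H]_{\beta}$, which is exactly where the absence of an $E$-adjoint bites and where the somewhat exotic definition in \ref{box-exponential-h} earns its complexity.
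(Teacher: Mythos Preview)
Your construction is exactly the paper's: the vertex assignment $V(\hat{\phi})(w)$ via $v\mapsto V(\phi)(v,w)$ and $x\mapsto E(\phi)(1,x,w)$, and the edge assignment $E(\hat{\phi})(f)=(A_f,g_f)$ with $g_f(v)=E(\phi)(2,v,f)$ and $A_f=\mathcal{P}V(\hat{\phi})(\epsilon_K(f))$, match the paper line for line. Your proposal is in fact more explicit than the paper's terse proof, spelling out the coloring-condition check, the counit equation, and the uniqueness argument that the paper leaves implicit; the only cosmetic difference is that you introduce $\hat{\gamma}_w$ as motivation but (correctly) do not actually need it, and you should write $A_e=\mathcal{P}V(\hat{\phi})(\epsilon_K(e))$ rather than $V(\hat{\phi})(\epsilon_K(e))$ to be precise about applying the map to a subset.
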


\begin{proof}

For $w\in V(K)$, define $\xymatrix{G\ar[r]^{V\left(\hat{\phi}\right)(w)} & H}\in\cat{H}$ by $V\left(V\left(\hat{\phi}\right)(w)\right)(v):=V(\phi)(v,w)$ and $E\left(V\left(\hat{\phi}\right)(w)\right)(e):=E(\phi)(1,e,w)$.  For $f\in E(K)$, define $g_{f}:V(G)\to E(H)$ by $g_{f}(v):=E(\phi)(2,v,f)$ and $A_{f}:=\left(\mathcal{P}V\left(\hat{\phi}\right)\circ\epsilon_{K}\right)(f)$.  Let $E\left(\hat{\phi}\right):E(K)\to E[G,H]_{\beta}$ by $E\left(\hat{\phi}\right)(f):=\left(A_{f},g_{f}\right)$ and $\hat{\phi}:=\left(E\left(\hat{\phi}\right),V\left(\hat{\phi}\right)\right)$.

\end{proof}

\subsection{Box Product for Set System Multigraphs}
\label{CatM}

As the category $\cat{M}$ of set-system multigraphs is a full subcategory of $\cat{H}$, $\cat{M}$ inherits the box product from $\cat{H}$. One can quickly check that the box product of two multigraphs is again a multigraph. Recall from \cite[Theorem 2.33]{IH1} that the inclusion functor $\xymatrix{\cat{M}\ar[r]^{N} & \cat{H}}$ which treats graphs as set systems with all edges size $2$, admits a right adjoint in the deletion functor $\xymatrix{\cat{H}\ar[r]^{\Del} & \cat{M}}$, which removes larger edges.  As $N$ has no affect on multigraphs or their morphisms, and $\Del$ only restricts the edge sets and maps, both become strict symmetric monoidal functors.  Moreover, $\cat{M}$ is closed by the calculation below.
\begin{align*}
\cat{M}\left(G\Box K,H\right) &= \cat{H}\left(G\Box K,H\right)
\cong\cat{H}\left(K,[G,H]_{\beta}\right) \\
&= \cat{H}\left(U(K),[G,H]_{\beta}\right)
\cong\cat{M}\left(K,\Del[G,H]_{\beta}\right).
\end{align*}

The underlying multigraph functor $\xymatrix{\cat{Q}\ar[r]^{U} & \cat{M}}$, which ``undirects'' directed edges, also does not have any substantial effect on morphisms.  As the monoidal structure for $\cat{Q}$ and $\cat{M}$ are nearly identical, similar calculations show that $U$ is another strict symmetric monoidal functor.  Moreover, in the case of simple graphs the box exponential for $\cat{M}$ matches that in \cite{DOCHTERMANN2009490,knauer-book}. The results are summarized below.

\begin{thm}[Inheritance of the box product]
For $G,H\in\ob(\cat{M})$, one has $G\Box H\in\ob(\cat{M})$.  Consequently, $\Box$ defines a closed symmetric monoidal product on $\cat{M}$.  Moreover, all of $N$, $\Del$, and $U$ are strict symmetric monoidal functors.
\end{thm}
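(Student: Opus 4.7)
The plan is to proceed in four stages: (i) show closure of $\cat{M}$ under $\Box$ at the object level; (ii) invoke fullness to transport the symmetric monoidal coherence from $\cat{H}$ to $\cat{M}$; (iii) produce the internal hom in $\cat{M}$ via $\Del$; and (iv) check each of $N$, $\Del$, $U$ is strict symmetric monoidal by direct inspection.

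First I would verify that if $G,H\in\ob(\cat{M})$ then $G\Box H\in\ob(\cat{M})$ by reading off the endpoint map. Every edge of $G\Box H$ has the form $(1,e,y)$ with endpoint set $\epsilon_{G}(e)\times\{y\}$ or $(2,x,f)$ with endpoint set $\{x\}\times\epsilon_{H}(f)$. Since $|\epsilon_{G}(e)|=|\epsilon_{H}(f)|=2$ and the second factor is a singleton, each resulting endpoint set has cardinality exactly $2$. The monoidal unit in $\cat{H}$ is the single-vertex hypergraph (no edges), which trivially lies in $\cat{M}$. Because $\cat{M}$ is full in $\cat{H}$, the associator, unitors, and symmetry from $\cat{H}$ restrict without modification to morphisms of multigraphs; the coherence diagrams they satisfy in $\cat{H}$ then hold in $\cat{M}$ as well.

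For closedness I would use precisely the adjunction chain indicated in the excerpt: given $G\in\ob(\cat{M})$ and $H\in\ob(\cat{H})$, the object $\Del[G,H]_{\beta}$ serves as the internal hom in $\cat{M}$ via
\begin{align*}
\cat{M}(G\Box K,H) &= \cat{H}(G\Box K,H) \cong \cat{H}(K,[G,H]_{\beta}) \\
&= \cat{H}(N(K),[G,H]_{\beta}) \cong \cat{M}(K,\Del[G,H]_{\beta}),
\end{align*}
naturally in $K\in\ob(\cat{M})$, which pins down $\Del[G,H]_{\beta}$ as the right adjoint to $G\Box(-)$ on $\cat{M}$.

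For the strict symmetric monoidal structure on $N$, $\Del$, and $U$, each reduces to an elementary bijection on edge sets. For $N$, being the full subcategory inclusion, $N(G\Box H) = N(G)\Box N(H)$ literally as set-system hypergraphs, and the coherence maps in $\cat{H}$ restrict to those used in $\cat{M}$. For $\Del$, the edges of $\Del(H_{1}\Box H_{2})$ are exactly the size-$2$ edges of $H_{1}\Box H_{2}$, namely $(1,e,v)$ with $|\epsilon_{H_{1}}(e)|=2$ and $(2,v,e)$ with $|\epsilon_{H_{2}}(e)|=2$, which is precisely the edge set of $\Del(H_{1})\Box\Del(H_{2})$; the coherence isomorphisms are restrictions of those in $\cat{H}$ and so agree on the nose. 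For $U$, undirecting each edge $(1,e,y)$ of $Q_{1}\vec{\Box}Q_{2}$ produces endpoint pair $\{(\sigma_{Q_{1}}(e),y),(\tau_{Q_{1}}(e),y)\} = \epsilon_{U(Q_{1})}(e)\times\{y\}$, and similarly for type-$2$ edges; this matches the edge-endpoint data of $U(Q_{1})\Box U(Q_{2})$.

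The main obstacle is not any single isomorphism but the bookkeeping required to confirm strictness of the three coherence isomorphisms (associator, unitors, symmetry) under each of $N$, $\Del$, $U$. Since the coherence morphisms in both $\cat{H}$ and $\cat{Q}$ are defined by explicit set-theoretic formulas in Section \ref{StrMaps}, each check is routine but must be traced through the three index-type cases introduced by the disjoint union $\{1\}\times E\times V\sqcup\{2\}\times V\times E$; I would dispatch each case by a short diagram chase rather than a full formula expansion.
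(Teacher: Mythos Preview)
Your proposal is correct and follows essentially the same approach as the paper, which presents this theorem as a summary of the routine verifications sketched in the surrounding text of Section~\ref{CatM} rather than giving a separate formal proof. Your four stages---closure by endpoint-set inspection, transport of coherence via fullness of $\cat{M}$ in $\cat{H}$, closedness via the $N$--$\Del$ adjunction chain, and strictness of $N$, $\Del$, $U$ by direct comparison of the structure-map formulas in Section~\ref{StrMaps}---match the paper's outline exactly.
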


By \cite[Theorem 2.37]{IH1}, $U$ admits a right adjoint $\xymatrix{\cat{M}\ar[r]^{\vec{D}} & \cat{Q}}$ determined by the associated digraph.  By \cite[p.\ 105]{lipman-hashimoto}, the strict monoidal structure of $U$ yields a lax monoidal structure for $\vec{D}$, but the structure maps are actually isomorphisms, giving the result below.

\begin{cor}[Symmetric monoidal functor $\vec{D}$]
The functor $\vec{D}$ is strong symmetric monoidal from $\left(\cat{M},\Box,V^{\diamond}(\{1\})\right)$ to $\left(\cat{Q},\vec{\Box},\vec{V}^{\diamond}(\{1\})\right)$.
\end{cor}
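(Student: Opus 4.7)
The plan is to apply doctrinal adjunction. Because $U$ is strict symmetric monoidal (by the preceding theorem) and $U \dashv \vec{D}$, the cited result of Lipman--Hashimoto produces a canonical lax symmetric monoidal structure on $\vec{D}$ with coherence maps
\[
\vec{u}_{0}\colon \vec{V}^{\diamond}(\{1\}) \to \vec{D}\!\left(V^{\diamond}(\{1\})\right),\qquad
\vec{u}_{2}(G,H)\colon \vec{D}(G) \vec{\Box} \vec{D}(H) \to \vec{D}(G \Box H),
\]
obtained by transposing across the adjunction the identity on $V^{\diamond}(\{1\}) = U\!\left(\vec{V}^{\diamond}(\{1\})\right)$ and the map
\[
U\!\left(\vec{D}(G) \vec{\Box} \vec{D}(H)\right) = U\vec{D}(G) \Box U\vec{D}(H) \longrightarrow G \Box H
\]
induced componentwise by the counit of $U \dashv \vec{D}$. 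All coherence axioms for these data transfer for free from those of $U$, so the task reduces to showing that each $\vec{u}_{0}$ and $\vec{u}_{2}(G,H)$ is an isomorphism in $\cat{Q}$.

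For $\vec{u}_{0}$, I would note that $V^{\diamond}(\{1\})$ is an isolated vertex with no edges, so $\vec{D}(V^{\diamond}(\{1\}))$ is likewise a single vertex with no edges, which is exactly $\vec{V}^{\diamond}(\{1\})$; the map $\vec{u}_{0}$ is therefore forced to be the vertex identity. For $\vec{u}_{2}(G,H)$, my plan is to unpack $\vec{D}$ via its description in \cite[Theorem 2.37]{IH1}: $\vec{D}$ fixes the vertex set, replaces each non-loop undirected edge by its pair of orientations, and each undirected loop by a single directed loop. On vertices, both $\vec{V}(\vec{D}(G) \vec{\Box} \vec{D}(H))$ and $\vec{V}(\vec{D}(G \Box H))$ collapse to $V(G) \times V(H)$. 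On edges, the defining disjoint union of the box product yields
\[
\vec{E}\!\left(\vec{D}(G) \vec{\Box} \vec{D}(H)\right) = \left(\{1\} \times \vec{E}(\vec{D}(G)) \times V(H)\right) \cup \left(\{2\} \times V(G) \times \vec{E}(\vec{D}(H))\right),
\]
while $\vec{E}(\vec{D}(G \Box H))$ enumerates the orientations of edges of $G \Box H$. Since an edge $(1,e,w) \in E(G \Box H)$ has endpoint set $\epsilon_{G}(e) \times \{w\}$, its orientations correspond bijectively with the orientations of $e$ (the factor $w$ being fixed); analogously for edges $(2,v,f)$. A direct calculation confirms that this matching of orientations is compatible with source and target.

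The main obstacle will be verifying that the concrete orientation-matching bijection I have just described actually coincides on the nose with $\vec{u}_{2}(G,H)$ as produced abstractly by the adjoint transposition of the counit, so that the doctrinal coherence genuinely transfers. Once this identification is made, the strong symmetric monoidality of $\vec{D}$ from $\left(\cat{M},\Box,V^{\diamond}(\{1\})\right)$ to $\left(\cat{Q},\vec{\Box},\vec{V}^{\diamond}(\{1\})\right)$ follows immediately from the Lipman--Hashimoto lax structure.
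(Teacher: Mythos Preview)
Your approach is essentially the same as the paper's: invoke the Lipman--Hashimoto doctrinal-adjunction result to obtain the lax symmetric monoidal structure on $\vec{D}$ from the strict structure on $U$, and then verify that the resulting coherence maps are invertible. The paper carries out the verification by writing down the unit $\theta^{\diamond}$ and counit $\theta$ of $U\dashv\vec{D}$ explicitly, forming $\psi_{G,H}=\vec{D}(\theta_{G}\Box\theta_{H})\circ\theta^{\diamond}_{\vec{D}(G)\vec{\Box}\vec{D}(H)}$, and then computing its action on vertices and edges concretely; this simultaneously resolves what you flag as your ``main obstacle,'' since the explicit formula \emph{is} the adjoint transpose and is visibly a bijection. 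Your orientation-matching argument is a correct informal description of that same bijection, so rather than separately constructing a bijection and then checking it agrees with the transposed counit, it is cleaner to compute $\psi_{G,H}$ directly from the unit/counit formulas.
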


\begin{proof}

The counit of the $U$-$\vec{D}$ adjunction $\xymatrix{U\vec{D}(G)\ar[r]^(0.6){\theta_{G}} & G}\in\cat{M}$ is given by $V\left(\theta_{Q}\right)(v)=v$ and $E\left(\theta_{Q}\right)(e,x,y)=e$, while the unit $\xymatrix{Q\ar[r]^(0.4){\theta_{Q}^{\diamond}} & \vec{D}U(Q)}\in\cat{Q}$ is given by $\vec{V}\left(\theta_{Q}^{\diamond}\right)(v)=v$ and $\vec{E}\left(\theta_{Q}^{\diamond}\right)(e)=\left(e,\sigma_{Q}(e),\tau_{Q}(e)\right)$.  For $G,H\in\ob(\cat{M})$, the lax monoidal structure for $\vec{D}$ is given by $\psi_{G,H}:=\vec{D}\left(\theta_{G}\Box\theta_{H}\right)\circ\theta_{\vec{D}(G)\vec{\Box}\vec{D}(H)}^{\diamond}$ and $\psi_{\bullet}:=\vec{D}\left(id_{V^{\diamond}(\{1\})}\right)\circ\theta_{\vec{V}^{\diamond}(\{1\})}^{\diamond}$.  Routine calculations show that
\begin{itemize}
\item $\vec{V}\left(\psi_{G,H}\right)(v,w)=(v,w)$,
\item $\vec{E}\left(\psi_{G,H}\right)(1,(e,v,z),w)=((1,e,w),(v,w),(z,w))$,\\
$\vec{E}\left(\psi_{G,H}\right)(2,v,(f,w,u))=((2,v,f),(v,w),(v,u))$,
\item $\vec{V}\left(\psi_{\bullet}\right)(1)=1$,
$\vec{E}\left(\psi_{\bullet}\right)=id_{\emptyset}$.
\end{itemize}
Thus, both $\psi_{G,H}$ and $\psi_{\bullet}$ are isomorphisms.

\end{proof}

\begin{ex}
The box product on $\cat{M}$ is the canonical box product in Figure \ref{fig:PrismLadder}, while the box exponential is obtained from the $\cat{H}$ then applying $\Del$ to only leave the $2$-edges. Figure \ref{fig:HBoxExp} as a multigraph box exponential would consist only of the eight $2$-edges.
\end{ex}

\subsection{Box Product for Incidence Hypergraphs}
\label{CatR}
Recall, an incidence hypergraph $G$ consists of a set of vertices $\check{V}(G)$, a set of edges $\check{E}(G)$, a set of incidences $I(G)$, and two incidence maps $\varsigma_{G}:I(G)\to\check{V}(G)$, and $\omega_{G}:I(G)\to\check{E}(G)$. Taking inspiration from the quiver and set-system cases, a box product for incidence hypergraphs can be defined accordingly with the monoidal structure in Section \ref{StrMapsR}.

\begin{defn}[Box product]
Given incidence hypergraphs $G$ and $H$, define the incidence hypergraph $G\check{\Box}H$ by
\begin{enumerate}
\item $\check{V}\left(G\check{\Box}H\right):=\check{V}(G)\times\check{V}(H)$,
\item $\check{E}\left(G\check{\Box}H\right):=\left(\{1\}\times\check{E}(G)\times\check{V}(H)\right)\cup\left(\{2\}\times\check{V}(G)\times\check{E}(H)\right)$,
\item $I\left(G\check{\Box}H\right):=\left(\{1\}\times I(G)\times\check{V}(H)\right)\cup\left(\{2\}\times\check{V}(G)\times I(H)\right)$,
\item $\varsigma_{G\check{\Box}H}(n,x,y):=\left\{\begin{array}{cc}
\left(\varsigma_{G}(x),y\right),	&	n=1,\\
\left(x,\varsigma_{H}(y)\right),	&	n=2,\\
\end{array}\right.$
\item $\omega_{G\check{\Box}H}(n,x,y):=\left\{\begin{array}{cc}
\left(1,\omega_{G}(x),y\right),	&	n=1,\\
\left(2,x,\omega_{H}(y)\right),	&	n=2.\\
\end{array}\right.$
\end{enumerate}
Given incidence hypergraph homomorphisms $\xymatrix{G_{1}\ar[r]^{\phi} & G_{2}}$ and $\xymatrix{H_{1}\ar[r]^{\psi} & H_{2}}$, define the incidence hypergraph homomorphism $\xymatrix{G_{1}\check{\Box}H_{1}\ar[r]^{\phi\check{\Box}\psi} & G_{2}\check{\Box}H_{2}}$ by
\begin{enumerate}
\item $\check{V}\left(\phi\check{\Box}\psi\right)(v,w):=\left(\check{V}(\phi)(v),\check{V}(\psi)(w)\right)$,
\item $\check{E}\left(\phi\check{\Box}\psi\right)(n,x,y):=\left\{\begin{array}{cc}
\left(1,\check{E}(\phi)(x),\check{V}(\psi)(y)\right),	&	n=1,\\
\left(2,\check{V}(\phi)(x),\check{E}(\psi)(y)\right),	&	n=2,\\
\end{array}\right.$
\item $I\left(\phi\check{\Box}\psi\right)(n,x,y):=\left\{\begin{array}{cc}
\left(1,I(\phi)(x),\check{V}(\psi)(y)\right),	&	n=1,\\
\left(2,\check{V}(\phi)(x),I(\psi)(y)\right),	&	n=2.\\
\end{array}\right.$
\end{enumerate}
\end{defn}

\begin{ex}
By its construction, the box product for incidence hypergraphs agrees with the structure of the set system box product with the relevant incidences.
\begin{figure}[H]
    \centering
    \includegraphics{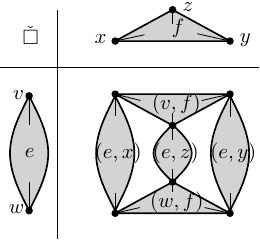}\\
    \caption{The incidence hypergraph box product of a $2$-edge and a $3$-edge.}
    \label{fig:RBoxProd}
\end{figure}
``Forgetting'' these incidences via $\mathscr{F}$ will return the appropriate set system box product. However, it was shown in \cite[Lemma 3.34]{IH1} that ``forgetting'' incidences is not functorial.  On the other hand, applying the incidence-forming functor from \cite[Lemma 3.32]{IH1} to Example \ref{h-box} yields this example precisely.  That is, the objects align for both $\mathcal{I}$ and $\mathscr{F}$, but the morphisms do not for $\mathscr{F}$.
\end{ex}

As with the quiver case, this monoidal product is closed, and the structure of the incidence box exponential is developed in parallel to its quiver counterpart.
\begin{align*}
\check{V}\left[G,H\right]_{V}
&\cong\Set\left(\{1\},\check{V}\left[G,H\right]_{V}\right)
\cong\cat{R}\left(\check{V}^{\diamond}\left(\{1\}\right),\left[G,H\right]_{V}\right) \\
&\cong\cat{R}\left(G\check{\Box}\check{V}^{\diamond}\left(\{1\}\right),H\right)
\cong\cat{R}\left(G,H\right), \\
\check{E}\left[G,H\right]_{V}
&\cong\Set\left(\{1\},\check{E}\left[G,H\right]_{V}\right)
\cong\cat{R}\left(\check{E}^{\diamond}\left(\{1\}\right),\left[G,H\right]_{V}\right)
\cong\cat{R}\left(G\check{\Box}\check{E}^{\diamond}\left(\{1\}\right),H\right), \\
I\left[G,H\right]_{V}
&\cong\Set\left(\{1\},I\left[G,H\right]_{V}\right)
\cong\cat{R}\left(I^{\diamond}\left(\{1\}\right),\left[G,H\right]_{V}\right)
\cong\cat{R}\left(G\check{\Box}I^{\diamond}\left(\{1\}\right),H\right).
\end{align*}

However, a peculiar change occurs for the edge set.  Direct calculation shows that
\begin{align*}
G\check{\Box}\check{E}^{\diamond}(\{1\})=\check{E}^{\diamond}\left(\{2\}\times\check{V}(G)\times\{1\}\right).
\end{align*}
Therefore, the edge set resolves to be far simpler, and familiar,
\begin{align*}
\check{E}\left[G,H\right]_{V}
&\cong\cat{R}\left(\check{E}^{\diamond}\left(\{2\}\times\check{V}(G)\times\{1\}\right),H\right)
\cong\Set\left(\{2\}\times\check{V}(G)\times\{1\},\check{E}(H)\right) \\
&\cong\Set\left(\check{V}(G),\check{E}(H)\right).
\end{align*}

Much like the set-system case, the edges of the incidence box exponential involve functions from the vertices to the edges.  However, there is no need to color the functions by the endpoint set, streamlining the construction.  As in the quiver case, the port and attachment functions are determined by the Yoneda embedding.  Again, the characterization of this functor follows from direct calculation.

\begin{prop}[Yoneda functor]
Let $Y_{\cat{R}}:\cat{D}^{\mathrm{op}}\to\cat{R}$ be the Yoneda embedding.  Then, $Y_{\cat{R}}(0)\cong\check{V}^{\diamond}(\{1\})$, $Y_{\cat{R}}(1)\cong\check{E}^{\diamond}(\{1\})$, and $Y_{\cat{R}}(2)\cong I^{\diamond}(\{1\})$.  Moreover, $\xymatrix{Y_{\cat{R}}(0)\ar[r]^{Y_{\cat{R}}(y)} & Y_{\cat{R}}(2) & Y_{\cat{R}}(1)\ar[l]_{Y_{\cat{R}}(z)}}\in\cat{R}$ are determined uniquely by $\check{V}Y_{\cat{R}}(y)(1)=1$ and $\check{E}Y_{\cat{R}}(z)(1)=1$, mapping to the only vertex and edge, respectively.
\end{prop}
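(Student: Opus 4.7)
The plan parallels the quiver-case proof verbatim and rests on two ingredients: that $\cat{R}$ is the presheaf category $\Set^{\cat{D}}$ with $\cat{D}$ having objects $0,1,2$ (tracking vertices, edges, incidences) and non-identity arrows $y: 2 \to 0$, $z: 2 \to 1$ (whose presheaf actions are $\varsigma$ and $\omega$, respectively), and the fact that a representing object of a representable functor is unique up to canonical isomorphism. Under this identification $\check{V}$, $\check{E}$, $I$ are the evaluation functors at $0$, $1$, $2$.

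To obtain the three isomorphisms, I would invoke the Yoneda lemma: for any $F \in \cat{R}$ and any $d \in \{0,1,2\}$ there is a natural bijection $\cat{R}(Y_{\cat{R}}(d), F) \cong F(d)$. The left-adjoint descriptions at the singleton give, in parallel, $\cat{R}(\check{V}^{\diamond}(\{1\}), F) \cong \check{V}(F) = F(0)$, and similarly for $\check{E}^{\diamond}(\{1\})$ and $I^{\diamond}(\{1\})$. Since representing objects are unique up to canonical isomorphism, one concludes $Y_{\cat{R}}(0) \cong \check{V}^{\diamond}(\{1\})$, $Y_{\cat{R}}(1) \cong \check{E}^{\diamond}(\{1\})$, and $Y_{\cat{R}}(2) \cong I^{\diamond}(\{1\})$. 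This may also be verified by direct calculation as a sanity check: $Y_{\cat{R}}(2) = \cat{D}(2, -)$ has vertex set $\cat{D}(2, 0) = \{y\}$, edge set $\cat{D}(2, 1) = \{z\}$, and incidence set $\cat{D}(2, 2) = \{\mathrm{id}_{2}\}$ with $\varsigma(\mathrm{id}_{2}) = y$, $\omega(\mathrm{id}_{2}) = z$, so the representable really is a $1$-edge.

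For the \emph{moreover} clause I would compute $Y_{\cat{R}}(y)$ and $Y_{\cat{R}}(z)$ directly from the Yoneda embedding formula (precomposition on representables). Evaluating $Y_{\cat{R}}(y)$ at component $0$ sends $\mathrm{id}_{0} \in \cat{D}(0, 0)$ to $y \circ \mathrm{id}_{0} = y \in \cat{D}(2, 0)$; transporting along the singleton identifications $\mathrm{id}_{0} \leftrightarrow 1$ and $y \leftrightarrow 1$ yields $\check{V}Y_{\cat{R}}(y)(1) = 1$. The symmetric computation at component $1$ gives $\check{E}Y_{\cat{R}}(z)(1) = 1$. Uniqueness is automatic, since any $\cat{R}$-morphism out of $\check{V}^{\diamond}(\{1\})$ (respectively $\check{E}^{\diamond}(\{1\})$) is determined entirely by its value on the single generating vertex (respectively edge).

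The only genuine obstacle is notational: one must carefully track the opposite-category convention so that the arrows $y$ and $z$ of $\cat{D}^{\mathrm{op}}$ induce Yoneda maps in the directions depicted in the statement, and one must check that the representable $\cat{D}(2, -)$ recovers a $1$-edge on the nose (rather than some variant). Beyond this bookkeeping, no new idea is required past the standard presheaf machinery already deployed for $\cat{Q}$.
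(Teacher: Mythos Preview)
Your proposal is correct and matches the paper's approach: the paper does not give a written proof at all, merely stating that ``the characterization of this functor follows from direct calculation,'' which is precisely the Yoneda/representable-object computation you spell out. Your argument is a faithful (indeed more detailed) execution of what the paper leaves implicit.
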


With all components ready, the box exponential and its universal property can be stated and proven.    Again, observe the use of the right unitor and Yoneda map in the port function.

\begin{defn}[Box exponential]
Given incidence hypergraphs $G$ and $H$, define the hypergraph $[G,H]_{V}$ by
\begin{enumerate}
\item $\check{V}[G,H]_{V}:=\cat{R}(G,H)$,
\item $\check{E}[G,H]_{V}:=\Set\left(\check{V}(G),\check{E}(H)\right)$,
\item $I[G,H]_{V}:=\cat{R}\left(G\check{\Box}I^{\diamond}(\{1\}),H\right)$,
\item $\varsigma_{[G,H]_{V}}(\psi):=\psi\circ\left(G\check{\Box}Y_{\cat{R}}(y)\right)\circ\check{r}^{-1}_{G}$,
\item $\left(\omega_{[G,H]_{V}}(\psi)\right)(v):=\check{E}(\psi)(2,v,1)$.
\end{enumerate}
Define the incidence hypergraph homomorphism $\xymatrix{G\check{\Box}[G,H]_{V}\ar[r]^(0.7){\mathrm{vev}_{H}^{G}} & H}$ by
\begin{enumerate}
\item $\check{V}\left(\mathrm{vev}_{H}^{G}\right)(v,\phi):=\check{V}(\phi)(v)$,
\item $\check{E}\left(\mathrm{vev}_{H}^{G}\right)(n,x,\psi):=\left\{\begin{array}{cc}
\check{E}(\psi)(x),	&	n=1,\\
\psi(x),	&	n=2,\\
\end{array}\right.$
\item $I\left(\mathrm{vev}_{H}^{G}\right)(n,x,\varphi):=\left\{\begin{array}{cc}
I(\varphi)(x)	&	n=1,\\
I(\varphi)(2,x,1)	&	n=2.\\
\end{array}\right.$
\end{enumerate}
\end{defn}

Observe that the edge set does not consist of $\cat{R}$-morphisms, while the incidences are determined by the $\cat{R}$-morphisms of a incidence-$G$-prism, $\cat{R}\left(G\check{\Box}{I}^{\diamond}(\{1\}),H\right)$. The asymmetry for the edge set is remedied in Section \ref{sec:LapProd}.

\begin{ex}
Consider the incidence hypergraph box exponential of $\check{P}_1$, the path of length $1$, to the terminal object $I^{\diamond}(\{1\})$, the single incidence $1$-edge. The vertex set of $[I^{\diamond}(\{1\}),{\check{P}}_1]_{V}$ are the $\cat{R}$-morphisms from $I^{\diamond}(\{1\}) \rightarrow {\check{P}}_1$, which are determined by $i \mapsto j$ or $i \mapsto k$ in Figure \ref{fig:RBoxExp}. The edges of $[I^{\diamond}(\{1\}),{\check{P}}_1]_{V}$ are the $\Set$-morphisms that map the vertices of $I^{\diamond}(\{1\})$ to the edges of ${\check{P}}_1$. There is only one such map $v \mapsto f$.
\begin{figure}[H]
    \centering
    \includegraphics{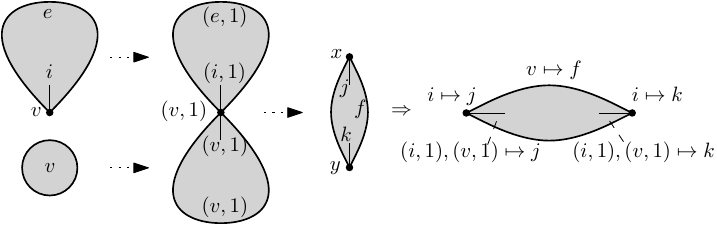}\\
    \caption{The $\cat{R}$ box exponential $[I^{\diamond}(\{1\}),{\check{P}}_1]_{V}$ as determined by their maps.}
    \label{fig:RBoxExp}
\end{figure}
The incidences are calculated in Figure \ref{fig:RBoxExp} via the $\cat{R}$-morphism through-maps from 
\begin{align*}
I^{\diamond}(\{1\}) \check{\square}I^{\diamond}(\{1\} ) \rightarrow {\check{P}}_1,
\end{align*}
which is uniquely determined by the image of $(v,1)$.
\end{ex}

\begin{thm}[Universal property]
Given an incidence hypergraph homomorphism $\xymatrix{G\check{\Box}K\ar[r]^(0.6){\phi} & H}$, there is a unique incidence hypergraph homomorphism $\xymatrix{K\ar[r]^(0.4){\hat{\phi}} & [G,H]_{V}}$ such that $\mathrm{vev}_{H}^{G}\circ\left(G\check{\Box}\hat{\phi}\right)=\phi$.
\end{thm}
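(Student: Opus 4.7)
The plan is to build $\hat{\phi}$ componentwise using the three left adjoints $\check{V}^{\diamond}$, $\check{E}^{\diamond}$, and $I^{\diamond}$ at the singleton $\{1\}$, in exact parallel with the quiver case from Section~\ref{CatQ}. For each $v\in\check{V}(K)$, the characteristic function $\gamma_{v}:\{1\}\to\check{V}(K)$ transposes to a unique $\hat{\gamma}_{v}:\check{V}^{\diamond}(\{1\})\to K$ in $\cat{R}$; analogously, each $e\in\check{E}(K)$ yields $\hat{\delta}_{e}:\check{E}^{\diamond}(\{1\})\to K$ and each $i\in I(K)$ yields $\hat{\eta}_{i}:I^{\diamond}(\{1\})\to K$. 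I then define
\begin{align*}
\check{V}\bigl(\hat{\phi}\bigr)(v) &:= \phi\circ\bigl(G\check{\Box}\hat{\gamma}_{v}\bigr)\circ\check{r}_{G}^{-1}, \\
\check{E}\bigl(\hat{\phi}\bigr)(e)(v') &:= \check{E}(\phi)(2,v',e)\quad\text{for } v'\in\check{V}(G), \\
I\bigl(\hat{\phi}\bigr)(i) &:= \phi\circ\bigl(G\check{\Box}\hat{\eta}_{i}\bigr),
\end{align*}
so the vertex assignment lands in $\cat{R}(G,H)=\check{V}[G,H]_{V}$, the edge assignment lands in $\Set(\check{V}(G),\check{E}(H))=\check{E}[G,H]_{V}$, and the incidence assignment lands in $\cat{R}(G\check{\Box}I^{\diamond}(\{1\}),H)=I[G,H]_{V}$. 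The edge clause could alternatively be phrased via $\hat{\delta}_{e}$ and the isomorphism $G\check{\Box}\check{E}^{\diamond}(\{1\})\cong\check{E}^{\diamond}(\{2\}\times\check{V}(G)\times\{1\})$ noted just before the definition; the two views agree, and I would use whichever keeps the verifications cleanest.

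Next I must verify that $\hat{\phi}$ is actually a morphism in $\cat{R}$, that is, that $I(\hat{\phi})$ commutes with $\varsigma$ and $\omega$. For $\varsigma$, one unfolds the definition: $\varsigma_{[G,H]_{V}}\bigl(I(\hat{\phi})(i)\bigr)=\phi\circ(G\check{\Box}\hat{\eta}_{i})\circ(G\check{\Box}Y_{\cat{R}}(y))\circ\check{r}_{G}^{-1}$, and functoriality of $G\check{\Box}-$ together with the defining property $\check{V}Y_{\cat{R}}(y)(1)=1$ identifies $(G\check{\Box}\hat{\eta}_{i})\circ(G\check{\Box}Y_{\cat{R}}(y))\circ\check{r}_{G}^{-1}$ with $(G\check{\Box}\hat{\gamma}_{\varsigma_{K}(i)})\circ\check{r}_{G}^{-1}$, matching $\check{V}(\hat{\phi})(\varsigma_{K}(i))$. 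The $\omega$ compatibility is similar, using $\check{E}Y_{\cat{R}}(z)(1)=1$ and unfolding the definition of $\omega_{[G,H]_{V}}$ to arrive at $\check{E}(\phi)(2,v',\omega_{K}(i))$.

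Then the counit condition $\mathrm{vev}_{H}^{G}\circ(G\check{\Box}\hat{\phi})=\phi$ is verified on vertices, edges, and incidences separately. On a vertex $(v',v)$, the right-hand side of the defining formula reduces via $\check{V}(\mathrm{vev}_{H}^{G})(v',\phi\circ(G\check{\Box}\hat{\gamma}_{v})\circ\check{r}_{G}^{-1})=\check{V}(\phi)(v',v)$. On edges of both types $n=1,2$ and on incidences of both types, one traces through the piecewise definition of $\mathrm{vev}_{H}^{G}$; the $n=2$ cases are where the $\hat{\delta}_{e}$/$\hat{\eta}_{i}$ constructions exactly cancel against the ``evaluate at the distinguished element'' formula, which is the content of the Yoneda functor proposition. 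Uniqueness follows by reversing this: any competitor $\hat{\phi}'$ satisfying the counit equation must have $\check{V}(\hat{\phi}')(v)$, $\check{E}(\hat{\phi}')(e)$, and $I(\hat{\phi}')(i)$ determined by applying $\mathrm{vev}_{H}^{G}\circ(G\check{\Box}-)$ to the appropriate generators $\hat{\gamma}_{v}$, $\hat{\delta}_{e}$, $\hat{\eta}_{i}$, which forces the three formulas above.

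The main obstacle I anticipate is the incidence clause: unlike the quiver case, an incidence in $[G,H]_{V}$ is itself a morphism out of a nontrivial box product $G\check{\Box}I^{\diamond}(\{1\})$, and the verification that $\varsigma_{[G,H]_{V}}$ and $\omega_{[G,H]_{V}}$ behave correctly under $I(\hat{\phi})$ requires carefully tracking the right unitor and the Yoneda maps $Y_{\cat{R}}(y)$, $Y_{\cat{R}}(z)$ through a box product. Everything else is a bookkeeping exercise analogous to the $\cat{Q}$ proof, but this step is where the incidence layer makes the argument genuinely different and is where I would slow down to check the diagram chases.
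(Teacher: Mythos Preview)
Your construction of $\hat{\phi}$ is exactly the one the paper gives (up to the name $\hat{\eta}_{i}$ versus $\hat{\delta}_{i}$), and the paper in fact omits the verifications that $\hat{\phi}$ is a morphism, that the counit equation holds, and that $\hat{\phi}$ is unique---so your proposal is both correct and more detailed than the paper's own proof.
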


\begin{proof}

For $v\in\check{V}(K)$, define $\gamma_{v}:\{1\}\to\check{V}(K)$ by $\gamma_{v}(1):=v$.  There is a unique $\xymatrix{\check{V}^{\diamond}(\{1\})\ar[r]^(0.6){\hat{\gamma}_{v}} & K}\in\cat{R}$ such that $\check{V}\left(\hat{\gamma}_{v}\right)=\gamma_{v}.$
For $i\in I(K)$, define $\delta_{i}:\{1\}\to I(K)$ by $\delta_{i}(1):=i$.  There is a unique $\xymatrix{I^{\diamond}(\{1\})\ar[r]^(0.6){\hat{\delta}_{i}} & K}\in\cat{R}$ such that $I\left(\hat{\delta}_{i}\right)=\delta_{i}$.  Define $\xymatrix{K\ar[r]^(0.35){\hat{\phi}} & \left[G,H\right]_{V}}\in\cat{R}$ by
\begin{itemize}
\item $\check{V}\left(\hat{\phi}\right)(v):=\phi\circ\left(G\check{\Box}\hat{\gamma}_{v}\right)\circ\check{r}_{G}^{-1}$,
\item $\left(\check{E}\left(\hat{\phi}\right)(e)\right)(w):=\check{E}(\phi)(2,w,e)$,
\item $I\left(\hat{\phi}\right)(i):=\phi\circ\left(G\check{\Box}\hat{\delta}_{i}\right)$.
\end{itemize}

\end{proof}

Recall from \cite[Lemma 3.32]{IH1} that there is a natural incidence-forming functor $\xymatrix{\cat{H}\ar[r]^{\mathcal{I}} & \cat{R}}$, which was sadly neither continuous nor cocontinuous.  On the other hand, as the box product for incidence hypergraphs was based on the box product for set-system hypergraphs, $\mathcal{I}$ is a strong symmetric monoidal functor when using the respective box products.  The structure maps for $\mathcal{I}$ are defined below, and the verification is routine.

\begin{defn}[Monoidal structure for $\mathcal{I}$]
Given $G,H\in\ob(\cat{H})$, define $\xymatrix{\mathcal{I}(G)\check{\Box}\mathcal{I}(H)\ar[r]^{\Phi_{G,H}} & \mathcal{I}(G\Box H)}\in\cat{R}$ by
\begin{enumerate}
\item $\check{V}\left(\Phi_{G,H}\right)(v,w):=(v,w)$, $\check{E}\left(\Phi_{G,H}\right)(n,x,y):=(n,x,y)$,
\item $I\left(\Phi_{G,H}\right)(1,(v,e),w):=((v,w),(1,e,w))$,
\item $I\left(\Phi_{G,H}\right)(2,v,(w,f)):=((v,w),(2,v,f))$.
\end{enumerate}
\end{defn}

\section{Laplacian Product}
\label{sec:LapProd}

The spirit of this new product can be seen as an adaptation of the multiplication of complex numbers.  If one considers vertices as the ``real part'' of a graph, and edges as the ``imaginary part'', consider the multiplication below,
\begin{align*}
\left(v_{1}+\imath e_{1}\right)\left(v_{2}+\imath e_{2}\right)
=\left(v_{1}v_{2}-e_{1}e_{2}\right)+\imath\left(e_{1}v_{2}+v_{1}e_{2}\right).
\end{align*}
Observe that the components of the traditional box product have arisen naturally:  vertices $V_{1}\times V_{2}$, edges $E_{1}\times V_{2}$, and edges $V_{1}\times E_{2}$. However, a new set has arisen:  vertices $E_{1}\times E_{2}$. The inclusion of this new set of vertices builds a new box product, called the ``Laplacian product'' for reasons made clear in this section. In this section we provide a categorical formulation of incidence duality and generalize the box product to a dually-closed product that: (1) has a simple interpretation via bipartite graphs; (2) combinatorially treats vertices and edges as real and imaginary parts of a hypergraph, respectively; (3) has an exponential where all parts are homomorphisms in $\cat{R}$; and (4) the evaluation of this new exponential at paths determines the combinatorial Laplacian for powers of the introverted/extroverted oriented hypergraph (signless Laplacian) and its dual from \cite{OHSachs,IH2,AH1,OHMTT}.
\subsection{Incidence Duality and the Laplacian Product}

Notice that the finite category $\cat{D}$ in defining incidence hypergraphs (Appendix subsection \ref{IHdefn}, and Table \ref{t:library1}) has a symmetry about object $2$, and there is an obvious functor $\Sigma$ swapping the objects $0$ and $1$ (vertices and edges), and the morphisms $y$ and $z$ (port and attachment).  Composing an incidence hypergraph $G$ with $\Sigma$ gives rise to \emph{incidence duality} by reversing the roles of vertices and edges.  As $\Sigma$ is clearly its own inverse, hence, incidence duality is self-inverting.  These results are summarized below. Please note that both the logical functor $\Upsilon$ and the incidence-dual functor $\Box^{\#}$ arise naturally as composition functors.  This shared representation pattern raises the question of what other compositions might have significance.

\begin{defn}[Incidence duality]
Let $\Sigma:\cat{D}\to\cat{D}$ be the functor given by $y\mapsto z$ and $z\mapsto y$.  Define $(\cdot)^{\#}:=(\cdot)\Sigma$, the functor from $\cat{R}$ to itself determined by composing on the right by $\Sigma$.
\end{defn}

\begin{lem}[Action of $(\cdot)^{\#}$]
Given $\xymatrix{G\ar[r]^{\phi} & H}\in\cat{R}$, then
\begin{enumerate}
\item $G^{\#}=\left(\check{V}(G),\check{E}(G),I(G),\varsigma_{G},\omega_{G}\right)^{\#}=\left(\check{E}(G),\check{V}(G),I(G),\omega_{G},\varsigma_{G}\right)$,
\item $\phi^{\#}=\left(\check{V}(\phi),\check{E}(\phi),I(\phi)\right)^{\#}=\left(\check{E}(\phi),\check{V}(\phi),I(\phi)\right)$.
\end{enumerate}
\end{lem}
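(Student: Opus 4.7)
The statement is essentially the computation of what the composition functor $(\cdot)\Sigma$ does on objects and morphisms of $\cat{R}$, so my plan is to simply unravel the definition in the presheaf category. Since $\cat{R}$ is the functor category $\Set^{\cat{D}}$ (as recorded in the Appendix/Table), an incidence hypergraph $G$ is literally a functor $G:\cat{D}\to\Set$, and a morphism $\phi:G\to H$ is a natural transformation. Composition with $\Sigma:\cat{D}\to\cat{D}$ on the right then means precomposition (whiskering) by $\Sigma$, which is computed componentwise.

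First I would recall that on objects $\Sigma(0)=1$, $\Sigma(1)=0$, $\Sigma(2)=2$, and on morphisms $\Sigma(y)=z$, $\Sigma(z)=y$, with identities preserved. For claim (1), I would then evaluate $G^{\#}=G\Sigma$ at each object and each generating morphism of $\cat{D}$: the vertex set of $G^{\#}$ is $G^{\#}(0)=G(\Sigma(0))=G(1)=\check{E}(G)$, the edge set is $G^{\#}(1)=G(0)=\check{V}(G)$, the incidence set is $G^{\#}(2)=G(2)=I(G)$, and the incidence maps are $\varsigma_{G^{\#}}=G^{\#}(y)=G(z)=\omega_{G}$ and $\omega_{G^{\#}}=G^{\#}(z)=G(y)=\varsigma_{G}$, giving exactly the tuple asserted.

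For claim (2), since natural transformations are composed with functors by whiskering, I would note that the component of $\phi^{\#}=\phi\ast\Sigma$ at an object $X\in\cat{D}$ is $(\phi^{\#})_{X}=\phi_{\Sigma(X)}$. Evaluating at $0$, $1$, and $2$ yields $(\phi^{\#})_{0}=\phi_{1}=\check{E}(\phi)$, $(\phi^{\#})_{1}=\phi_{0}=\check{V}(\phi)$, and $(\phi^{\#})_{2}=\phi_{2}=I(\phi)$, producing the listed triple.

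There is essentially no obstacle here beyond bookkeeping; the only thing to watch is the direction convention for the arrows $y,z\in\cat{D}$ so that $\varsigma$ and $\omega$ are indexed correctly, which is fixed once and for all by the setup in the Appendix. The self-inverting statement $\left((\cdot)^{\#}\right)^{2}=\mathrm{id}_{\cat{R}}$, although not part of the statement being proved, falls out for free from $\Sigma^{2}=\mathrm{id}_{\cat{D}}$ and would be a natural immediate corollary to record.
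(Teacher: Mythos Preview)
Your proof is correct and is exactly the routine unraveling of the definition that the paper intends; in fact the paper states this lemma without proof, treating it as an immediate consequence of the definition of $(\cdot)^{\#}:=(\cdot)\Sigma$, so your componentwise computation via precomposition and whiskering is precisely the expected verification.
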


\begin{thm}[Properties of $(\cdot)^{\#}$]\label{incidence-dual}
The functor $(\cdot)^{\#}$ is self-inverting.  Moreover, the following functorial equalities hold:  $\check{V}\left((\cdot)^{\#}\right)=\check{E}(\cdot)$, $\check{E}\left((\cdot)^{\#}\right)=\check{V}(\cdot)$, $I\left((\cdot)^{\#}\right)=I(\cdot)$.
\end{thm}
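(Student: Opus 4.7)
The plan is to exploit the definition $(\cdot)^{\#} = (\cdot)\Sigma$ and push everything down to the level of the small category $\cat{D}$, where the relevant checks become essentially immediate.

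First I would verify that $\Sigma:\cat{D}\to\cat{D}$ is an involution on $\cat{D}$ itself. Since $\Sigma$ is determined by its action on generators (the objects $0,1,2$ and the morphisms $y,z$), one just checks that $\Sigma\Sigma$ acts as the identity on this generating data: it fixes $2$, swaps $0\leftrightarrow 1$ and back, and swaps $y\leftrightarrow z$ and back. By functoriality this forces $\Sigma\Sigma=\mathrm{id}_{\cat{D}}$. Then self-inversion of $(\cdot)^{\#}$ on $\cat{R}=\Set^{\cat{D}}$ follows from associativity of composition of functors: for any $G\in\ob(\cat{R})$, $(G^{\#})^{\#}=(G\Sigma)\Sigma=G(\Sigma\Sigma)=G\,\mathrm{id}_{\cat{D}}=G$, and analogously for morphisms $\phi$ (regarded as natural transformations), so $(\cdot)^{\#}\circ(\cdot)^{\#}=\mathrm{id}_{\cat{R}}$.

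Second I would establish the three functorial equalities. Recall from the setup that $\check{V}$, $\check{E}$, and $I$ are the evaluation functors $\cat{R}=\Set^{\cat{D}}\to\Set$ at the objects $0$, $1$, and $2$ of $\cat{D}$ respectively. Under precomposition with $\Sigma$, evaluation at $0$ becomes evaluation at $\Sigma(0)=1$, evaluation at $1$ becomes evaluation at $\Sigma(1)=0$, and evaluation at $2$ becomes evaluation at $\Sigma(2)=2$. That is, for any $G\in\ob(\cat{R})$,
\begin{align*}
\check{V}(G^{\#}) &= (G\Sigma)(0) = G(\Sigma(0)) = G(1) = \check{E}(G),\\
\check{E}(G^{\#}) &= (G\Sigma)(1) = G(\Sigma(1)) = G(0) = \check{V}(G),\\
I(G^{\#}) &= (G\Sigma)(2) = G(\Sigma(2)) = G(2) = I(G),
\end{align*}
and the same identification applies componentwise to morphisms $\phi$ via their action on the corresponding hom-sets. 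This yields the three equalities of functors $\check{V}((\cdot)^{\#})=\check{E}(\cdot)$, $\check{E}((\cdot)^{\#})=\check{V}(\cdot)$, and $I((\cdot)^{\#})=I(\cdot)$.

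There is no real obstacle here — the statement is a direct consequence of the presheaf description of $\cat{R}$ together with the evident involutive symmetry of $\cat{D}$. The only thing to be careful about is to verify $\Sigma\Sigma=\mathrm{id}_{\cat{D}}$ on all of $\cat{D}$ (not merely on the generators), which is handled by invoking the fact that a functor is determined by its values on a generating set of arrows and that $\Sigma$ respects the composition relations defining $\cat{D}$; once this is in place, both claims follow by unpacking the preceding lemma on the action of $(\cdot)^{\#}$.
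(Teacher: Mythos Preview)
Your proposal is correct and matches the paper's approach: the paper states this theorem without proof, treating it as immediate from the definition $(\cdot)^{\#}=(\cdot)\Sigma$ and the preceding lemma unpacking that action, which is exactly the reasoning you supply. There is nothing to add.
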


\begin{ex}
The incidence dual of a path is shown in Figure \ref{fig:IncDual}. Here, the incidence dual of ${\check{P}}_1$ is also a path of length $1$ starting and ending at an edge.
\begin{figure}[H]
    \centering
    \includegraphics{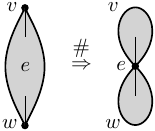}\\
    \caption{The incidence dual of ${\check{P}}_1$.}
    \label{fig:IncDual}
\end{figure}
\end{ex}

Including the ``missing'' vertices from $\check{\square}$ (compare the vertex set below to the $\cat{R}$-box-product) and adding the appropriate incidences, we obtain the following duality-closed version of box product.    As with the other products discussed thus far, the Laplacian product is a symmetric monoidal product with the structure maps in Section \ref{StrMapsL}.

\begin{defn}[Laplacian product]
Given incidence hypergraphs $G$ and $H$, define the hypergraph $G\blacksquare H$ by
\begin{enumerate}
\item $\check{V}\left(G\blacksquare H\right):=\left(\{1\}\times\check{V}(G)\times\check{V}(H)\right)\cup\left(\{4\}\times\check{E}(G)\times\check{E}(H)\right)$,
\item $\check{E}\left(G\blacksquare H\right):=\left(\{2\}\times\check{E}(G)\times\check{V}(H)\right)\cup\left(\{3\}\times\check{V}(G)\times\check{E}(H)\right)$,
\item $I\left(G\blacksquare H\right):=\left(\{1\}\times I(G)\times\check{V}(H)\right)\cup\left(\{2\}\times I(G)\times\check{E}(H)\right)\cup\left(\{3\}\times\check{E}(G)\times I(H)\right)\cup\left(\{4\}\times\check{V}(G)\times I(H)\right)$,
\item $\varsigma_{G\blacksquare H}(n,x,y):=\left\{\begin{array}{cc}
\left(1,\varsigma_{G}(x),y\right),	&	n=1,\\
\left(4,\omega_{G}(x),y\right),	&	n=2,\\
\left(4,x,\omega_{H}(y)\right),	&	n=3,\\
\left(1,x,\varsigma_{H}(y)\right),	&	n=4,\\
\end{array}\right.$
\item $\omega_{G\blacksquare H}(n,x,y):=\left\{\begin{array}{cc}
\left(2,\omega_{G}(x),y\right),	&	n=1,\\
\left(3,\varsigma_{G}(x),y\right),	&	n=2,\\
\left(2,x,\varsigma_{H}(y)\right),	&	n=3,\\
\left(3,x,\omega_{H}(y)\right),	&	n=4.\\
\end{array}\right.$
\end{enumerate}
Given incidence hypergraph homomorphisms $\xymatrix{G_{1}\ar[r]^{\phi} & G_{2}}$ and $\xymatrix{H_{1}\ar[r]^{\psi} & H_{2}}$, define the incidence hypergraph homomorphism $\xymatrix{G_{1}\blacksquare H_{1}\ar[r]^{\phi\blacksquare\psi} & G_{2}\blacksquare H_{2}}$ by
\begin{enumerate}
\item $\check{V}\left(\phi\blacksquare\psi\right)(n,x,y):=\left\{\begin{array}{cc}
\left(1,\check{V}(\phi)(x),\check{V}(\psi)(y)\right),	&	n=1,\\
\left(4,\check{E}(\phi)(x),\check{E}(\psi)(y)\right),	&	n=4,\\
\end{array}\right.$
\item $\check{E}\left(\phi\blacksquare\psi\right)(n,x,y):=\left\{\begin{array}{cc}
\left(2,\check{E}(\phi)(x),\check{V}(\psi)(y)\right),	&	n=2,\\
\left(3,\check{V}(\phi)(x),\check{E}(\psi)(y)\right),	&	n=3,\\
\end{array}\right.$
\item $I\left(\phi\blacksquare\psi\right)(n,x,y):=\left\{\begin{array}{cc}
\left(1,I(\phi)(x),\check{V}(\psi)(y)\right),	&	n=1,\\
\left(2,I(\phi)(x),\check{E}(\psi)(y)\right),	&	n=2,\\
\left(3,\check{E}(\phi)(x),I(\psi)(y)\right),	&	n=3,\\
\left(4,\check{V}(\phi)(x),I(\psi)(y)\right),	&	n=4.\\
\end{array}\right.$
\end{enumerate}
\end{defn}

The next example demonstrates the box-like nature of the Laplacian product over the incidence structure. Moreover, the product of edge-pairs are vertices, effectively treating edges as the ``imaginary part'' of an incidence hypergraph.

\begin{ex}
Consider the product of two single-incidence $1$-edge generators $\check{P}_{1/2} \cong I^{\diamond}\left(\{1\}\right)$.
By construction, the objects being multiplied are replaced with its incidence-dual as it traverses each incidence.  The two dual copies of the single incidence are dashed-circled (right), while the single incidence inducing the duality appear on the dotted-line.
\begin{figure}[H]
    \centering
    \includegraphics{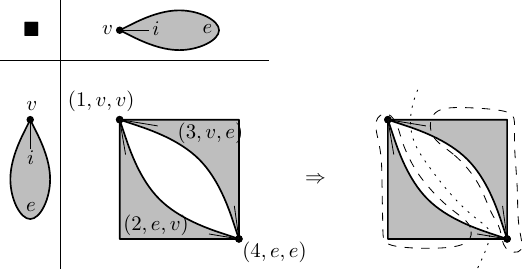}\\
    \caption{The Laplacian product of $\check{P}_{1} \blacksquare \check{P}_{1}$.}
    \label{fig:RLaplacianProd1}
\end{figure}
Here we see that the Laplacian product forms ``boxes'' along the incidence structure where products of edge-pairs are vertices.
\end{ex}

The product of an incidence hypergraph with the single incidence hypergraph $G \blacksquare \check{P}_{1/2} \cong G \blacksquare I^{\diamond}\left(\{1\}\right)$ is called the \emph{incidence-prism of $G$}. Compared to the standard prism $G \square {P}_{1}$ for graphs, the incidence-prism produces a copy of $G$ and its dual $G^{\#}$ linked via ``rung'' incidences. This is shown in the next example with the creation of an incidence-ladder. 

\begin{ex}
Now consider the Laplacian product of a $2$-edge with a $1$-edge, $\check{P}_{1} \blacksquare \check{P}_{1/2}$. The dual copies of ${\check{P}}_1$ and ${\check{P}}^{\#}_1$ from Figure \ref{fig:IncDual} appear in the Laplacian product in the dashed-circles in Figure \ref{fig:RLaplacianProd2} with the incidence $j$ tying them together. This produces an incidence-ladder in which the rungs are incidences.

\begin{figure}[H]
    \centering
    \includegraphics{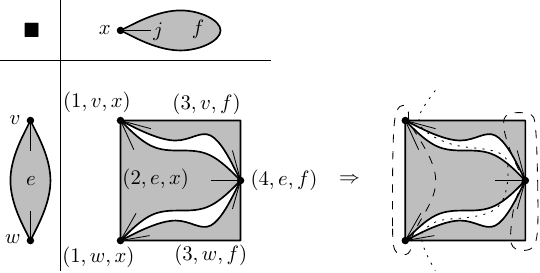}\\
    \caption{The Laplacian product of a $2$-edge with a $1$-edge.}
    \label{fig:RLaplacianProd2}
\end{figure}
Additionally, there are $3$ copies of the $1$-edge horizontally through the product, dualizing every incidence step. 
\end{ex}

As in the other box products, the single isolated vertex $\check{V}^{\diamond}(\{1\})$ is the unit object.  Since $\left(\check{V}^{\diamond}(\{1\})\right)^{\#}=\check{E}^{\diamond}(\{1\})$, the single loose edge, the latter has a similar action.  Instead of recovering the original object, the Laplacian product with $\check{E}^{\diamond}(\{1\})$ creates the incidence dual.  This action is implemented by ``anti-unitor'' natural isomorphisms defined below.

\begin{defn}[Anti-unitors]
For $G\in\ob(\cat{R})$, define $\xymatrix{G\blacksquare\check{E}^{\diamond}(\{1\})\ar[r]^(0.7){\hat{\rho}_{G}} & G^{\#} & \check{E}^{\diamond}(\{1\})\blacksquare G\ar[l]_(0.7){\hat{\lambda}_{G}}}\in\cat{R}$ by
\begin{enumerate}
\item $\check{V}\left(\hat{\rho}_{G}\right)(4,e,1):=e$, $\check{E}\left(\hat{\rho}_{G}\right)(3,v,1):=v$, $I\left(\hat{\rho}_{G}\right)(2,i,1):=i$,
\item $\check{V}\left(\hat{\lambda}_{G}\right)(4,1,e):=e$, $\check{E}\left(\hat{\lambda}_{G}\right)(2,1,v):=v$, $I\left(\hat{\lambda}_{G}\right)(3,1,i):=i$.
\end{enumerate}
\end{defn}

As with the unitors of the monoidal product, the anti-unitors entangle nicely with the commutator.  Like the monoidal structure, the proof is tedious, but routine.

\begin{lem}[Anti-unitors \& commutator]\label{antiunitor}
For $G\in\ob(\cat{R})$, the following diagram commutes.
\begin{align*}
\xymatrix{
G\blacksquare\check{E}^{\diamond}(\{1\})\ar[rr]^{\check{\gamma}_{G,\check{E}^{\diamond}(\{1\})}}\ar[dr]_{\hat{\rho}_{G}}	&	&	\check{E}^{\diamond}(\{1\})\blacksquare G\ar[dl]^{\hat{\lambda}_{G}}\\
&	G^{\#}\\
}
\end{align*}
\end{lem}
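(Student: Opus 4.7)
The plan is to verify the lemma by direct computation on the underlying sets $\check{V}$, $\check{E}$, and $I$, exploiting the fact that $\check{E}^{\diamond}(\{1\})$ is the free incidence hypergraph on a single edge, so it has empty vertex set, a single edge, and empty incidence set. Consequently, the defining disjoint unions for $G\blacksquare\check{E}^{\diamond}(\{1\})$ collapse drastically: the only surviving vertex case is $n=4$ giving $\{4\}\times\check{E}(G)\times\{1\}$, the only surviving edge case is $n=3$ giving $\{3\}\times\check{V}(G)\times\{1\}$, and the only surviving incidence case is $n=2$ giving $\{2\}\times I(G)\times\{1\}$. Symmetrically, $\check{E}^{\diamond}(\{1\})\blacksquare G$ collapses to vertices $\{4\}\times\{1\}\times\check{E}(G)$, edges $\{2\}\times\{1\}\times\check{V}(G)$, and incidences $\{3\}\times\{1\}\times I(G)$.

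With these reductions in hand, I would then trace the effect of the commutator $\check{\gamma}_{G,\check{E}^{\diamond}(\{1\})}$ from Section \ref{StrMapsL}. By its definition as the symmetry morphism for $\blacksquare$, it swaps arguments coordinatewise while relabeling the index $n$ to match the target sets: on vertices $(4,e,1)\mapsto(4,1,e)$, on edges $(3,v,1)\mapsto(2,1,v)$ (since edge case $3$ on the left lands in the edge case $2$ slot of $\check{E}^{\diamond}(\{1\})\blacksquare G$), and on incidences $(2,i,1)\mapsto(3,1,i)$ (incidence case $2$ maps to case $3$ on the other side). One verifies that these assignments indeed constitute a valid $\cat{R}$-morphism by checking compatibility with $\varsigma$ and $\omega$, but this is subsumed in the already-established symmetric monoidal structure of $\blacksquare$.

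Finally I would compose with $\hat{\lambda}_G$ and compare to $\hat{\rho}_G$ directly from their definitions: $(4,1,e)\mapsto e$, $(2,1,v)\mapsto v$, and $(3,1,i)\mapsto i$ under $\hat{\lambda}_G$, matching exactly the images $e$, $v$, $i$ of $(4,e,1)$, $(3,v,1)$, and $(2,i,1)$ under $\hat{\rho}_G$. Since $\check{V}G^{\#}=\check{E}(G)$, $\check{E}G^{\#}=\check{V}(G)$, and $I G^{\#}=I(G)$ by Theorem \ref{incidence-dual}, the equalities of sets are meaningful and the two composites agree on every element.

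The main obstacle is bookkeeping rather than conceptual: confirming that the index relabeling $2\leftrightarrow 3$ (for edges and middle incidences) and $1\leftrightarrow 4$ (for vertices and boundary incidences) baked into $\check{\gamma}$ aligns correctly with the anti-unitor cases, and that no vacuous cases have been overlooked. Because all non-surviving disjoint union pieces are empty, the verification is truly a handful of pointwise checks, and the proof reduces to routine diagram chasing on the reduced generators, much like the standard verification for unitor coherence with the braiding in a symmetric monoidal category.
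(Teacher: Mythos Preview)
Your proposal is correct and matches the paper's approach: the paper simply states that ``like the monoidal structure, the proof is tedious, but routine,'' and your elementwise verification is precisely that routine check carried out in full. One minor slip in your closing paragraph: the commutator $\check{\gamma}$ leaves the vertex index $n$ unchanged (it is $(n,x,y)\mapsto(n,y,x)$ on vertices, not $1\leftrightarrow 4$), but your actual computation $(4,e,1)\mapsto(4,1,e)$ is correct, so this does not affect the argument.
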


Combining the triangle from Lemma \ref{antiunitor} with the associator-commutator hexagon from \cite[Def. II.6.1.2]{borceux} yields the ``Triforce of Duality'' in Figure \ref{triforce}.  Thus, the incidence dual acting on a Laplacian product can be migrated to either coordinate of the product as stated in the theorem below. Recall, incidence duality acts as a hypergraphic replacement of line graphs, and acts as transposition on incidence matrices \cite{AH1}.

\begin{thm}[Duality \& Laplacian product]\label{dual-laplace}
For $G,H\in\ob(\cat{R})$, one has the following natural isomorphisms from Figure \ref{triforce}.
\begin{align*}
\left(G\blacksquare H\right)^{\#}
\cong G^{\#}\blacksquare H
\cong G\blacksquare H^{\#}
\end{align*}
\end{thm}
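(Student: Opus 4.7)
The plan is to build all three isomorphisms as composites of the anti-unitors $\hat{\rho}$, $\hat{\lambda}$, the associator, and the commutator of the symmetric monoidal structure on $(\cat{R}, \blacksquare, \check{V}^{\diamond}(\{1\}))$, and then to verify coherence by reading the composites off the Triforce diagram of Figure \ref{triforce}. Since every structural morphism used is already a natural isomorphism, naturality of the composites is automatic; the real content of the theorem is that these composites agree, i.e., that the Triforce commutes.

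Concretely, I would obtain $G^{\#} \blacksquare H \cong G \blacksquare H^{\#}$ as the three-step composite
\[
G^{\#} \blacksquare H
\;\xrightarrow{\hat{\rho}_{G}^{-1} \blacksquare \mathrm{id}_{H}}\;
(G \blacksquare \check{E}^{\diamond}(\{1\})) \blacksquare H
\;\xrightarrow{\alpha}\;
G \blacksquare (\check{E}^{\diamond}(\{1\}) \blacksquare H)
\;\xrightarrow{\mathrm{id}_{G} \blacksquare \hat{\lambda}_{H}}\;
G \blacksquare H^{\#},
\]
using that $\check{E}^{\diamond}(\{1\}) = (\check{V}^{\diamond}(\{1\}))^{\#}$ so the anti-unitors do land in the incidence dual. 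In the same vein, $(G \blacksquare H)^{\#} \cong G \blacksquare H^{\#}$ is produced by $\hat{\rho}_{G \blacksquare H}^{-1}$, then $\alpha$, then $\mathrm{id}_{G} \blacksquare \hat{\rho}_{H}$; and $(G \blacksquare H)^{\#} \cong G^{\#} \blacksquare H$ is produced symmetrically on the left factor using $\hat{\lambda}_{G \blacksquare H}^{-1}$, $\alpha$, and $\hat{\lambda}_{G} \blacksquare \mathrm{id}_{H}$.

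These three composites are the outer edges of the Triforce. Its outer triangles are instances of the anti-unitor triangle of Lemma \ref{antiunitor} applied at $G$, $H$, and $G \blacksquare H$, while its central inverted triangle is assembled from the associator-commutator hexagon for $(\cat{R}, \blacksquare, \check{V}^{\diamond}(\{1\}))$ together with the same anti-unitor triangle applied at the unit. Each cell commutes by Lemma \ref{antiunitor} or by the Mac Lane coherence axioms of a symmetric monoidal category, so the whole Triforce commutes and the three displayed isomorphisms are mutually compatible. The main obstacle is not computation but bookkeeping: one must thread the associator through with the correct bracketing and choose whether each anti-unitor appears as $\hat{\rho}$ or $\hat{\lambda}$ so that the Triforce truly closes; no direct manipulation of the underlying sets of vertices, edges, or incidences is needed, since every constituent piece has already been verified as a natural isomorphism.
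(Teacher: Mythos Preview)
Your proposal is correct and follows essentially the same approach as the paper: the paper states just before the theorem that the Triforce of Figure~\ref{triforce} is obtained by gluing the anti-unitor triangle of Lemma~\ref{antiunitor} (at $G$, $H$, and $G\blacksquare H$) onto the associator--commutator hexagon, and the three isomorphisms are then read off its outer edges exactly as you describe. One small correction: the inner region of the Triforce is a hexagon rather than a ``central inverted triangle,'' and its commutativity follows directly from the symmetric-monoidal hexagon axiom for $\check{E}^{\diamond}(\{1\})$, $G$, $H$; no appeal to the anti-unitor at the unit is needed.
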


\begin{figure}[H]
\begin{align*}
\xymatrix{
&	&	&	G^{\#}\blacksquare H\\
&	&	\left(\check{E}^{\diamond}(\{1\})\blacksquare G\right)\blacksquare H\ar[dl]_{\check{\alpha}_{\check{E}^{\diamond}(\{1\}),G,H}}\ar[dr]_{\check{\gamma}_{\check{E}^{\diamond}(\{1\}),G}\blacksquare id_{H}}\ar[ur]^{\hat{\lambda}_{G}\blacksquare id_{H}}\\
&	\check{E}^{\diamond}(\{1\})\blacksquare\left(G\blacksquare H\right)\ar[dd]^{\check{\gamma}_{\check{E}^{\diamond}(\{1\}),G\blacksquare H}}\ar[dl]_{\hat{\lambda}_{G\blacksquare H}}	&	&	\left(G\blacksquare\check{E}^{\diamond}(\{1\})\right)\blacksquare H\ar[dd]^{\check{\alpha}_{G,\check{E}^{\diamond}(\{1\}),H}}\ar[uu]_{\hat{\rho}_{G}\blacksquare id_{H}}\\
\left(G\blacksquare H\right)^{\#}\\
&	\left(G\blacksquare H\right)\blacksquare\check{E}^{\diamond}(\{1\})\ar[dr]_{\check{\alpha}_{G,H,\check{E}^{\diamond}(\{1\})}}\ar[ul]^{\hat{\rho}_{G\blacksquare H}}	&	&	G\blacksquare\left(\check{E}^{\diamond}(\{1\})\blacksquare H\right)\ar[dl]^{id_{G}\blacksquare \check{\gamma}_{\check{E}^{\diamond}(\{1\}),H}}\ar[dd]^{id_{G}\blacksquare\hat{\lambda}_{H}}\\
&	&	G\blacksquare\left(H\blacksquare\check{E}^{\diamond}(\{1\})\right)\ar[dr]_{id_{G}\blacksquare\hat{\rho}_{H}}\\
&	&	&	G\blacksquare H^{\#}\\
}
\end{align*}
\caption{Triforce of Duality}
\label{triforce}
\end{figure}

Thus, dualizing a single hypergraph induces duality on the entire product.

\subsection{Laplacian Exponential}

Like all the previous cases, the Laplacian product has an associated exponential bracket, but its construction is far more symmetric than its predecessors.  This is due to the anti-unitor isomorphisms above.
\begin{align*}
\check{V}\left[G,H\right]_{L}
&\cong\Set\left(\{1\},\check{V}\left[G,H\right]_{L}\right)
\cong\cat{R}\left(\check{V}^{\diamond}\left(\{1\}\right),\left[G,H\right]_{L}\right) \\
&\cong\cat{R}\left(G\blacksquare\check{V}^{\diamond}\left(\{1\}\right),H\right)
\cong\cat{R}\left(G,H\right), \\
\check{E}\left[G,H\right]_{L}
&\cong\Set\left(\{1\},\check{E}\left[G,H\right]_{L}\right)
\cong\cat{R}\left(\check{E}^{\diamond}\left(\{1\}\right),\left[G,H\right]_{L}\right) \\
&\cong\cat{R}\left(G\blacksquare\check{E}^{\diamond}\left(\{1\}\right),H\right)
\cong\cat{R}\left(G^{\#},H\right), \\
I\left[G,H\right]_{L}
&\cong\Set\left(\{1\},I\left[G,H\right]_{L}\right)
\cong\cat{R}\left(I^{\diamond}\left(\{1\}\right),\left[G,H\right]_{L}\right)
\cong\cat{R}\left(G\blacksquare I^{\diamond}\left(\{1\}\right),H\right).
\end{align*}

Once more the Yoneda embedding provides the port and attachment functions, giving the construction below.  Moreover, right unitor and anti-unitor appear in the port and attachment functions, respectively with the Yoneda map.

\begin{defn}[Laplacian exponential]
Given incidence hypergraphs $G$ and $H$, define the hypergraph $[G,H]_{L}$ by
\begin{enumerate}
\item $\check{V}[G,H]_{L}:=\cat{R}(G,H)$,
\item $\check{E}[G,H]_{L}:=\cat{R}\left(G^{\#},H\right)$,
\item $I[G,H]_{L}:=\cat{R}\left(G\blacksquare I^{\diamond}(\{1\}),H\right)$,
\item $\varsigma_{[G,H]_{L}}(\psi):=\psi\circ\left(G\blacksquare Y_{\cat{R}}(y)\right)\circ\check{\rho}^{-1}_{G}$,
\item $\omega_{[G,H]_{L}}(\psi):=\psi\circ\left(G\blacksquare Y_{\cat{R}}(z)\right)\circ\hat{\rho}^{-1}_{G}$.
\end{enumerate}
Define the incidence hypergraph homomorphism $\xymatrix{G\blacksquare[G,H]_{L}\ar[r]^(0.7){\mathrm{cev}_{H}^{G}} & H}$ by
\begin{enumerate}
\item $\check{V}\left(\mathrm{cev}_{H}^{G}\right)(n,x,\phi):=\check{V}(\phi)(x)$,
\item $\check{E}\left(\mathrm{cev}_{H}^{G}\right)(n,x,\phi):=\check{E}(\phi)(x)$,
\item $I\left(\mathrm{cev}_{H}^{G}\right)(n,x,\psi):=\left\{\begin{array}{cc}
I(\psi)(x)	&	n=1,2,\\
I(\psi)(3,x,1)	&	n=3,\\
I(\psi)(4,x,1)	&	n=4.\\
\end{array}\right.$
\end{enumerate}
\label{def:LapExp}
\end{defn}

The edge set finally consists of $\cat{R}$-morphisms, while the incidences are determined by the $\cat{R}$-morphisms of the incidence-prism of $G$, $\cat{R}\left(G\blacksquare\check{P}_{1/2},H\right)$.

\begin{ex}
Consider the incidence hypergraph Laplacian exponential of $\check{P}_1$, the path of length $1$, to the terminal object $\check{P}_{1/2} \cong I^{\diamond}\left(\{1\}\right)$, the single incidence $1$-edge. The vertex set is identical to the  box exponential in $\cat{R}$. The edges of $[\check{P}_{1/2},{\check{P}}_1]_{L}$ are now the $\cat{R}$-morphisms from the dual (effectively addressing the set-crossing issue). Since $\check{P}_{1/2}$ is isomorphic to its dual the edges are calculated identically as the vertices. 
\begin{figure}[H]
    \centering
    \includegraphics{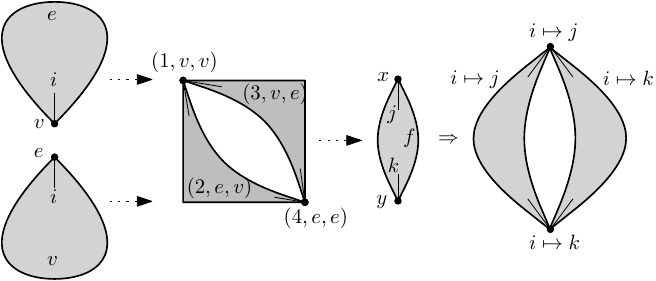}\\
    \caption{The Laplacian exponential $[\check{P}_{1/2},{\check{P}}_1]_{L}$ as determined by their maps.}
    \label{fig:RLaplacianExp}
\end{figure}
The incidences are calculated in Figure \ref{fig:RLaplacianExp} via the $\cat{R}$-morphism through-maps from $I^{\diamond}(\{1\}) {\blacksquare}I^{\diamond}(\{1\} )$ to ${\check{P}}_1$.  This Laplacian product was previously calculated in Figure \ref{fig:RLaplacianProd1}.
\end{ex}

\begin{thm}[Universal property]
Given an incidence hypergraph homomorphism $\xymatrix{G\blacksquare K\ar[r]^(0.6){\phi} & H}$, there is a unique incidence hypergraph homomorphism $\xymatrix{K\ar[r]^(0.4){\hat{\phi}} & [G,H]_{L}}$ such that $\mathrm{cev}_{H}^{G}\circ\left(G\blacksquare\hat{\phi}\right)=\phi$.
\end{thm}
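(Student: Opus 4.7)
The plan is to follow the template of the universal-property proofs already given for the quiver and $\cat{R}$-box exponentials: the Laplacian exponential is built out of $\cat{R}$-homs from free generators, so $\hat{\phi}$ will be defined pointwise by precomposing $\phi$ with the $\hat{\gamma}_v$, $\hat{\epsilon}_e$, $\hat{\delta}_i$ that select elements of $K$ out of the generators $\check{V}^{\diamond}(\{1\})$, $\check{E}^{\diamond}(\{1\})$, $I^{\diamond}(\{1\})$.

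First, for each $v\in\check{V}(K)$ let $\hat{\gamma}_v:\check{V}^{\diamond}(\{1\})\to K$ be the unique $\cat{R}$-morphism selecting $v$, and analogously for $e\in\check{E}(K)$ and $i\in I(K)$, let $\hat{\epsilon}_e:\check{E}^{\diamond}(\{1\})\to K$ and $\hat{\delta}_i:I^{\diamond}(\{1\})\to K$ select $e$ and $i$ respectively. Then I define $\hat{\phi}$ on underlying sets by
\begin{align*}
\check{V}(\hat{\phi})(v) &:= \phi\circ(G\blacksquare\hat{\gamma}_v)\circ\check{\rho}_G^{-1}, \\
\check{E}(\hat{\phi})(e) &:= \phi\circ(G\blacksquare\hat{\epsilon}_e)\circ\hat{\rho}_G^{-1}, \\
I(\hat{\phi})(i) &:= \phi\circ(G\blacksquare\hat{\delta}_i).
\end{align*}
These land in $\cat{R}(G,H)=\check{V}[G,H]_L$, in $\cat{R}(G^{\#},H)=\check{E}[G,H]_L$ (via the anti-unitor $\hat{\rho}_G$ from Lemma \ref{antiunitor}), and in $\cat{R}(G\blacksquare I^{\diamond}(\{1\}),H)=I[G,H]_L$.

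The next step is to verify that $\hat{\phi}$ respects $\varsigma$ and $\omega$. Unfolding the definitions of $\varsigma_{[G,H]_L}$ and $\omega_{[G,H]_L}$ in Definition \ref{def:LapExp}, both compatibility conditions reduce to the two Yoneda-type identities
\begin{align*}
\hat{\delta}_i\circ Y_{\cat{R}}(y)=\hat{\gamma}_{\varsigma_K(i)}, \qquad \hat{\delta}_i\circ Y_{\cat{R}}(z)=\hat{\epsilon}_{\omega_K(i)},
\end{align*}
each an equality of morphisms out of a free generator on one element. They hold because the single vertex (resp.\ edge) of that generator is sent by the left side to $\varsigma_K(i)$ (resp.\ $\omega_K(i)$) by the defining property of $\hat{\delta}_i$, matching the right side, and the generator's representability forces uniqueness. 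With this, $\hat{\phi}\in\cat{R}(K,[G,H]_L)$.

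Then I check $\mathrm{cev}_H^G\circ(G\blacksquare\hat{\phi})=\phi$ componentwise across the two vertex types, two edge types, and four incidence types of $G\blacksquare K$. Each case unfolds, using the explicit description of $\mathrm{cev}_H^G$, into an evaluation of the corresponding component of $\phi$ on the pair $(x,y)$, using $\check{V}(\hat{\gamma}_v)(1)=v$, $\check{E}(\hat{\epsilon}_e)(1)=e$, and $I(\hat{\delta}_i)(1)=i$, together with the $\varsigma_K,\omega_K$ compatibilities just established. Uniqueness of $\hat{\phi}$ follows by the standard argument: any $\tilde{\phi}$ with $\mathrm{cev}_H^G\circ(G\blacksquare\tilde{\phi})=\phi$ satisfies, after precomposition with $\hat{\gamma}_v$, $\hat{\epsilon}_e$, $\hat{\delta}_i$ and functoriality of $G\blacksquare(-)$, the very formulas defining $\hat{\phi}$, so the representability of the generators forces $\tilde{\phi}=\hat{\phi}$.

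The main obstacle is bookkeeping, not insight: because $\check{E}[G,H]_L=\cat{R}(G^{\#},H)$ rather than $\cat{R}(G,H)$, the anti-unitor $\hat{\rho}_G$ (not $\check{\rho}_G$) intervenes in the edge component and again in the $\omega$-compatibility on incidences, so one must ensure the Yoneda map $Y_{\cat{R}}(z)$ is cancelled through $\hat{\rho}_G^{-1}$ correctly. Past that, the verification is a routine diagram chase in the style of the earlier box-exponential proofs in Section \ref{sec:the4boxes}.
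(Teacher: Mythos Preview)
Your proposal is correct and follows essentially the same approach as the paper: you construct $\hat{\phi}$ componentwise by precomposing $\phi$ with the generator-selection maps and the (anti-)unitors, exactly as the paper does (up to your naming $\hat{\epsilon}_e,\hat{\delta}_i$ where the paper writes $\hat{\delta}_e,\hat{\theta}_i$). Your sketch is in fact more explicit than the paper's, which leaves the $\varsigma$/$\omega$-compatibility, the evaluation identity, and uniqueness entirely to the reader.
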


\begin{proof}

For $v\in\check{V}(K)$, define $\gamma_{v}:\{1\}\to\check{V}(K)$ by $\gamma_{v}(1):=v$.  There is a unique $\xymatrix{\check{V}^{\diamond}(\{1\})\ar[r]^(0.6){\hat{\gamma}_{v}} & K}\in\cat{R}$ such that $\check{V}\left(\hat{\gamma}_{v}\right)=\gamma_{v}$.  For $e\in\check{E}(K)$, define $\delta_{e}:\{1\}\to\check{E}(K)$ by $\delta_{e}(1):=e$.  There is a unique $\xymatrix{\check{E}^{\diamond}(\{1\})\ar[r]^(0.6){\hat{\delta}_{e}} & K}\in\cat{R}$ such that $\check{E}\left(\hat{\delta}_{e}\right)=\delta_{e}$.  For $i\in I(K)$, define $\theta_{i}:\{1\}\to I(K)$ by $\theta_{i}(1):=i$.  There is a unique $\xymatrix{I^{\diamond}(\{1\})\ar[r]^(0.6){\hat{\theta}_{i}} & K}\in\cat{R}$ such that $I\left(\hat{\theta}_{i}\right)=\theta_{i}$.  Define $\xymatrix{K\ar[r]^(0.35){\hat{\phi}} & \left[G,H\right]_{L}}\in\cat{R}$ by
\begin{itemize}
\item $\check{V}\left(\hat{\phi}\right)(v):=\phi\circ\left(G\blacksquare\hat{\gamma}_{v}\right)\circ\check{\rho}_{G}^{-1}$,
\item $\check{E}\left(\hat{\phi}\right)(e):=\phi\circ\left(G\blacksquare\hat{\delta}_{e}\right)\circ\hat{\rho}_{G}^{-1}$,
\item $I\left(\hat{\phi}\right)(i):=\phi\circ\left(G\blacksquare\hat{\theta}_{i}\right)$.
\end{itemize}

\end{proof}

Due to the Triforce of Duality, the Laplacian exponential inherits the same duality relationships as the Laplacian product.

\begin{cor}[Duality \& Laplacian Exponential]
For $G,H\in\ob(\cat{R})$, one has the following natural isomorphisms:  $\left[G,H\right]_{L}^{\#}\cong\left[G,H^{\#}\right]_{L}\cong\left[G^{\#},H\right]_{L}$.
\label{c:LapExpDual}
\end{cor}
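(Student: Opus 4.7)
The plan is to prove this corollary by pure abstract nonsense, using the Yoneda lemma together with the Laplacian exponential adjunction and the Triforce of Duality (Theorem \ref{dual-laplace}). The core observation is that since $(\cdot)^{\#}$ is self-inverting (in particular an isomorphism of categories from $\cat{R}$ to itself), it induces a natural bijection $\cat{R}(A,B)\cong\cat{R}(A^{\#},B^{\#})$ for all $A,B\in\ob(\cat{R})$. Because $(\cdot)^{\#}$ commutes with $\blacksquare$ up to natural isomorphism on each slot (the Triforce of Duality), it should also commute with the right adjoint $[-,-]_{L}$ on each slot, and that is exactly what the corollary states.

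For the isomorphism $[G,H]_{L}^{\#}\cong[G^{\#},H]_{L}$, I would compute the representable functor $\cat{R}\bigl(K,[G,H]_{L}^{\#}\bigr)$ for arbitrary $K\in\ob(\cat{R})$ by the chain of natural isomorphisms
\begin{align*}
\cat{R}\bigl(K,[G,H]_{L}^{\#}\bigr)
&\cong \cat{R}\bigl(K^{\#},[G,H]_{L}\bigr)
\cong \cat{R}\bigl(G\blacksquare K^{\#},H\bigr) \\
&\cong \cat{R}\bigl(G^{\#}\blacksquare K,H\bigr)
\cong \cat{R}\bigl(K,[G^{\#},H]_{L}\bigr),
\end{align*}
where the first step uses the hom-bijection induced by $(\cdot)^{\#}$ together with $[G,H]_{L}^{\#\#}=[G,H]_{L}$, the second and fourth steps are the Laplacian exponential adjunction, and the third step is the natural isomorphism $G\blacksquare K^{\#}\cong G^{\#}\blacksquare K$ from Theorem \ref{dual-laplace}. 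Yoneda then yields $[G,H]_{L}^{\#}\cong[G^{\#},H]_{L}$.

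For the remaining isomorphism $[G^{\#},H]_{L}\cong[G,H^{\#}]_{L}$, an analogous computation works: apply the adjunction to $\cat{R}\bigl(K,[G,H^{\#}]_{L}\bigr)\cong\cat{R}\bigl(G\blacksquare K,H^{\#}\bigr)$, then use the hom-bijection induced by $(\cdot)^{\#}$ to replace this with $\cat{R}\bigl((G\blacksquare K)^{\#},H\bigr)$, then invoke the Triforce to rewrite $(G\blacksquare K)^{\#}\cong G^{\#}\blacksquare K$, and finally apply the adjunction again. Yoneda closes the argument.

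The proof is essentially a transport-of-structure argument, and I do not expect any real obstacle: the only thing to watch is that every isomorphism in the chain is natural in $K$, which is automatic since each is either part of an adjunction, a component of the Triforce natural isomorphisms, or the hom-bijection induced by the self-inverting functor $(\cdot)^{\#}$. One could alternatively write down the isomorphisms $[G,H]_{L}^{\#}\to[G^{\#},H]_{L}$ explicitly on vertices, edges, and incidences using Definition \ref{def:LapExp} (on vertices $\cat{R}(G^{\#},H)=\check{E}[G,H]_{L}=\check{V}[G,H]_{L}^{\#}$, and so on), but the Yoneda approach is cleaner and makes naturality manifest.
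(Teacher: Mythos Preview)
Your proposal is correct and follows essentially the same Yoneda-plus-adjunction-plus-Triforce argument as the paper; the only cosmetic difference is that the paper relates all three objects to the common hub $\cat{R}\bigl((G\blacksquare K)^{\#},H\bigr)$ in three parallel lines, whereas you compute two separate chains. The ingredients and logic are identical.
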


\begin{proof}
By Theorems \ref{incidence-dual} and \ref{dual-laplace}, the following natural isomorphisms result for all $K\in\ob(\cat{R})$:
\begin{itemize}
\item $\cat{R}\left(\left(G\blacksquare K\right)^{\#},H\right)
\cong\cat{R}\left(G\blacksquare K,H^{\#}\right)
\cong\cat{R}\left(K,\left[G,H^{\#}\right]_{L}\right),
$
\item $\cat{R}\left(\left(G\blacksquare K\right)^{\#},H\right)
\cong\cat{R}\left(G^{\#}\blacksquare K,H\right)
\cong\cat{R}\left(K,\left[G^{\#},H\right]_{L}\right),
$
\item $\cat{R}\left(\left(G\blacksquare K\right)^{\#},H\right)
\cong\cat{R}\left(G\blacksquare K^{\#},H\right)
\cong\cat{R}\left(K^{\#},\left[G,H\right]_{L}\right)
\cong\cat{R}\left(K,\left[G,H\right]_{L}^{\#}\right).
$
\end{itemize}
\end{proof}

\section{Graphical Interpretation and Examples}
\label{sec:upsilon}

The left adjoint to the logical functor $\xymatrix{\cat{Q}\ar[r]^{\Upsilon} & \cat{R}}$ introduced in \cite{IH1} when composed with the undirecting functor $U$ produces the bipartite representation graph as depicted in Figure \ref{fig:GandBipartite}. We provide a natural interpretation of the Laplacian product that is characterized via the classical box product on their bipartite representation graphs. This interpretation immediately explains the imaginary-like behavior of edges as well as the incidence-prism behavior. Comprehensive examples are then included to link the structure of the incidence matrices.

\subsection{Bipartite Interpretation via the Logical Functor}
\label{ssec:LapIsBipartite}

Recall from \cite[Theorem 3.47]{IH1} that there is a logical functor $\xymatrix{\cat{Q}\ar[r]^{\Upsilon} & \cat{R}}$, which admits both a left and a right adjoint.  Considering $\Upsilon$ and its adjoints deeply intertwine $\cat{R}$ and $\cat{Q}$, one would expect that it should connect their monoidal structure as well.  Unfortunately, none of them has satisfying monoidal behavior.

\begin{thm}[Laplacian product \& $\Upsilon$]
The logical functor $\Upsilon$ is not strong monoidal from $\left(\cat{Q},\vec{\Box},\vec{V}^{\diamond}(\{1\})\right)$ to either $\left(\cat{R},\blacksquare,\check{V}^{\diamond}(\{1\})\right)$ or $\left(\cat{R},\check{\Box},\check{V}^{\diamond}(\{1\})\right)$.  The adjoints $\Upsilon^{\star}$ and $\Upsilon^{\diamond}$ are not strong monoidal from $\left(\cat{R},\blacksquare,\check{V}^{\diamond}(\{1\})\right)$ or $\left(\cat{R},\check{\Box},\check{V}^{\diamond}(\{1\})\right)$ to $\left(\cat{Q},\vec{\Box},\vec{V}^{\diamond}(\{1\})\right)$.
\end{thm}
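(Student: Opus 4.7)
The plan is to rule out strong monoidality in each of the six claimed cases by exhibiting either a failure of unit preservation (where possible) or an explicit tensor counterexample. A strong monoidal functor must carry the unit to the unit up to canonical isomorphism and preserve the tensor up to natural isomorphism, so failing either obstruction suffices. To compute the three functors on small objects I rely on the triple adjunction $\Upsilon^{\diamond}\dashv\Upsilon\dashv\Upsilon^{\star}$ combined with the bipartite description of $\Upsilon^{\diamond}$ recalled in Section \ref{ssec:LapIsBipartite}.

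For $\Upsilon$, I determine $\Upsilon(\vec{V}^{\diamond}(\{1\}))$ by Yoneda: for any $R\in\cat{R}$ one has $\cat{R}(R,\Upsilon(\vec{V}^{\diamond}(\{1\})))\cong\cat{Q}(\Upsilon^{\diamond}(R),\vec{V}^{\diamond}(\{1\}))$, and the right side is a singleton exactly when $\Upsilon^{\diamond}(R)$ has no arrows, i.e.\ when $I(R)=\emptyset$. The representing incidence hypergraph therefore has one vertex, one loose edge, and no incidence, so it is not isomorphic to $\check{V}^{\diamond}(\{1\})$, which has an empty edge set. Since $(\cat{R},\blacksquare)$ and $(\cat{R},\check{\Box})$ share the same unit $\check{V}^{\diamond}(\{1\})$, this single observation kills both strong monoidality statements for $\Upsilon$ simultaneously. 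The dual computation for $\Upsilon^{\star}$ gives $\vec{V}(\Upsilon^{\star}(\check{V}^{\diamond}(\{1\})))\cong\cat{R}(\Upsilon(\vec{V}^{\diamond}(\{1\})),\check{V}^{\diamond}(\{1\}))$, and this hom-set is empty because no $\cat{R}$-morphism can map a loose edge into the edgeless $\check{V}^{\diamond}(\{1\})$. Hence $\Upsilon^{\star}(\check{V}^{\diamond}(\{1\}))$ is the empty quiver, which is not isomorphic to $\vec{V}^{\diamond}(\{1\})$, so both claims for $\Upsilon^{\star}$ fall by the same unit argument.

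For $\Upsilon^{\diamond}$ the unit IS preserved, since the bipartite representation of a single isolated vertex is $\vec{V}^{\diamond}(\{1\})$, so I must produce an actual tensor counterexample. My choice is $G=H=\check{P}_{1/2}$ for both monoidal structures. For $\check{\Box}$, direct substitution gives $\check{P}_{1/2}\check{\Box}\check{P}_{1/2}$ with $1$ vertex, $2$ edges, and $2$ incidences, so its bipartite quiver has $3$ vertices and $2$ arrows, whereas $\Upsilon^{\diamond}(\check{P}_{1/2})\vec{\Box}\Upsilon^{\diamond}(\check{P}_{1/2})=\vec{P}_{1}\vec{\Box}\vec{P}_{1}$ has $4$ vertices and $4$ arrows, an immediate cardinality contradiction. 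For $\blacksquare$, the Laplacian product $\check{P}_{1/2}\blacksquare\check{P}_{1/2}$ of Figure \ref{fig:RLaplacianProd1} has $2$ vertices, $2$ edges, and $4$ incidences, with every vertex-edge pair connected by exactly one incidence; its bipartite quiver therefore has $4$ vertices and $4$ arrows all oriented from the vertex-class to the edge-class (the bipartite orientation of $K_{2,2}$), with in-degree multiset $\{0,0,2,2\}$. By contrast $\vec{P}_{1}\vec{\Box}\vec{P}_{1}$ is the directed ``square'' of Figure \ref{fig:QBoxProd}, with in-degree multiset $\{0,1,1,2\}$. Since the in-degree multiset is an isomorphism invariant, the two quivers are not isomorphic.

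The only step where cardinalities coincide and a finer invariant is required is the $\Upsilon^{\diamond}/\blacksquare$ case, which I expect to be the main obstacle; the other five sub-claims reduce to a one-line observation about the unit or a brute cardinality mismatch.
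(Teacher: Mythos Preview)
Your proof is correct and follows essentially the same strategy as the paper: unit failure for $\Upsilon$ and $\Upsilon^{\star}$, and a tensor counterexample on the terminal object $\mathbb{1}_{\cat{R}}=\check{P}_{1/2}$ for $\Upsilon^{\diamond}$. The paper simply asserts $\Upsilon(\vec{V}^{\diamond}(\{1\}))\cong\check{V}^{\diamond}(\{1\})\coprod\check{E}^{\diamond}(\{1\})$ and $\Upsilon^{\star}(\check{V}^{\diamond}(\{1\}))\cong\mathbb{0}_{\cat{Q}}$ ``from direct calculation'' where you derive them via the adjunction, and it distinguishes the three quivers in the $\Upsilon^{\diamond}$ case by drawing pictures rather than by your in-degree multiset argument, but the test objects and the logical skeleton are identical.
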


\begin{proof}

From direct calculation,
\begin{align*}
\Upsilon\left(\vec{V}^{\diamond}(\{1\})\right)&\cong\check{V}^{\diamond}(\{1\})\coprod\check{E}^{\diamond}(\{1\})\not\cong\check{V}^{\diamond}(\{1\}), \\ \Upsilon^{\star}\left(\check{V}^{\diamond}(\{1\})\right)&\cong\mathbb{0}_{\cat{Q}}\not\cong\vec{V}^{\diamond}(\{1\}).
\end{align*}
Thus, neither $\Upsilon$ nor $\Upsilon^{\star}$preserve the unit object.

The quivers $\Upsilon^{\diamond}\left(\mathbb{1}_{\cat{R}}\check{\Box}\mathbb{1}_{\cat{R}}\right)$, $\Upsilon^{\diamond}\left(\mathbb{1}_{\cat{R}}\blacksquare\mathbb{1}_{\cat{R}}\right)$, and $\Upsilon^{\diamond}\left(\mathbb{1}_{\cat{R}}\right)\vec{\Box}\Upsilon^{\diamond}\left(\mathbb{1}_{\cat{R}}\right)$ are drawn below.
\begin{center}
\begin{tabular}{|c|c|c|}
\hline
$\Upsilon^{\diamond}\left(\mathbb{1}_{\cat{R}}\check{\Box}\mathbb{1}_{\cat{R}}\right)$	&    $\Upsilon^{\diamond}\left(\mathbb{1}_{\cat{R}}\blacksquare\mathbb{1}_{\cat{R}}\right)$	&	$\Upsilon^{\diamond}\left(\mathbb{1}_{\cat{R}}\right)\vec{\Box}\Upsilon^{\diamond}\left(\mathbb{1}_{\cat{R}}\right)$\\
\hline
\includegraphics[scale=1]{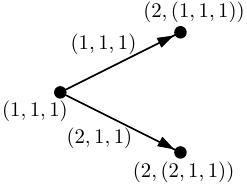}  &   \includegraphics[scale=1]{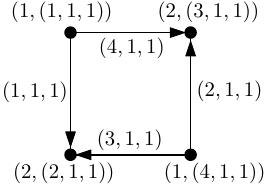}	&	\includegraphics[scale=1]{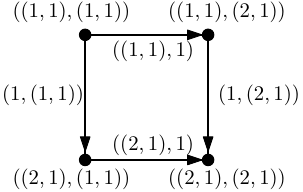}\\
\hline
\end{tabular}
\end{center}
\end{proof}

From the examples above, the only difference between $\Upsilon^{\diamond}(\cdot\blacksquare\cdot\cdot)$ and $\Upsilon^{\diamond}(\cdot)\vec{\Box}\Upsilon^{\diamond}(\cdot\cdot)$ is the direction of the edges.  Applying $U$ rectifies this, implying that $U\Upsilon^{\diamond}$ is a strong symmetric monoidal functor.  Furthermore, the following example emphasizes how $\blacksquare$ behaves far more coherently with $\Box$ under $U\Upsilon^{\diamond}$ than $\check{\Box}$.

\begin{ex}
Consider two paths of length $1$ in $\ob(\cat{R})$ and their products under $\check{\square}$ and $\blacksquare$. By sending each of them to their undirected bipartite equivalent graph via $U \Upsilon^{\diamond}$ we can examine the difference between the two products. Figure \ref{fig:RBoxWithUUpsilonDiamond} depicts the $\cat{R}$ box product (left) and its image under $U \Upsilon^{\diamond}$ (right). In the bipartite representation the vertices of this product are depicted as solid circles, while the edges appear as open circles. 

\begin{figure}[H]
    \centering
    \includegraphics{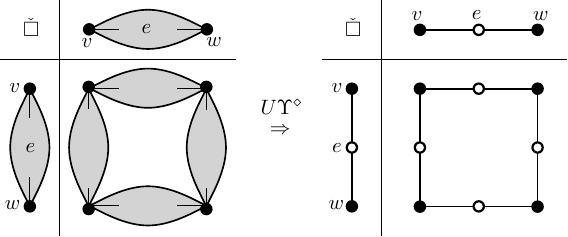}\\
    \caption{$\cat{R}$ box product under $U \Upsilon^{\diamond}$.}
    \label{fig:RBoxWithUUpsilonDiamond}
\end{figure}

Observe that $U \Upsilon^{\diamond}$ effectively doubles the length of a ``path'' as it translates it into $\cat{M}$ --- this is formalized in Corollary \ref{c:pathdouble}
\end{ex}

\begin{ex}
Now consider the same two paths of length $1$ under the Laplacian product. Figure \ref{fig:RLaplacianWithUUpsilonDiamond} depicts the Laplacian product (left) and its image under $U \Upsilon^{\diamond}$ (right). This is equivalent to taking the standard box product of the bipartite representation graphs. Again, the vertices in the product appear as solid circles, while the edges appear as open circles. The center $(e,e)$-vertex is not included in $\check{\square}$ but appears in $\blacksquare$.
\begin{figure}[H]
    \centering
    \includegraphics{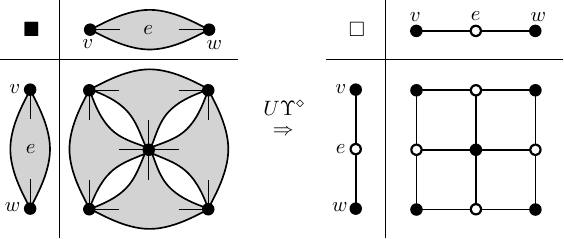}\\
    \caption{The Laplacian product of two paths of length $1$ treats $(e,e)$ as a vertex
    under $U \Upsilon^{\diamond}$.}
    \label{fig:RLaplacianWithUUpsilonDiamond}
\end{figure}
Dualizing one of the objects simply exchanges solid and open circles in the bipartite representation graph --- this is the ``Triforce of Duality'' in Figure \ref{triforce}. The Laplacian product's inclusion of $(e,e)$ pairs as vertices is analogous to the way the product of imaginary numbers are real. 
\end{ex}

Below is the monoidal structure for the composite functor, and the verification is routine.

\begin{defn}[Monoidal structure for $U\Upsilon^{\diamond}$]
For $G,H\in\ob(\cat{R})$, define\\ $\xymatrix{U\Upsilon^{\diamond}(G)\Box U\Upsilon^{\diamond}(H)\ar[r]^{\Psi_{G,H}} & U\Upsilon^{\diamond}(G\blacksquare H)}\in\cat{M}$ by
\begin{enumerate}
\item $V\left(\Psi_{G,H}\right)\left((1,v),(1,w)\right):=\left(1,(1,v,w)\right)$,
\item $V\left(\Psi_{G,H}\right)\left((2,e),(1,w)\right):=\left(2,(2,v,w)\right)$,
\item $V\left(\Psi_{G,H}\right)\left((1,v),(2,f)\right):=\left(2,(3,v,f)\right)$,
\item $V\left(\Psi_{G,H}\right)\left((2,e),(2,f)\right):=\left(1,(4,e,f)\right)$,

\item $E\left(\Psi_{G,H}\right)\left(1,i,(1,w)\right):=(1,i,w)$,
\item $E\left(\Psi_{G,H}\right)\left(1,i,(2,f)\right):=(2,i,f)$,
\item $E\left(\Psi_{G,H}\right)\left(2,(1,v),j\right):=(4,v,j)$,
\item $E\left(\Psi_{G,H}\right)\left(2,(2,e),j\right):=(3,e,j)$.
\end{enumerate}
Let $\xymatrix{V^{\diamond}(\{1\})\ar[r]^(0.4){\Psi_{\bullet}} & U\Upsilon^{\diamond}\left(\check{V}^{\diamond}(\{1\})\right)}\in\cat{M}$ be the unique map determined by $V\left(\Psi_{\bullet}\right)(1)=(1,1)$.
\end{defn}

\begin{thm}[Symmetric monoidal functor, $U\Upsilon^{\diamond}$]
Equipped with $\Psi$ and $\Psi_{\bullet}$, $U\Upsilon^{\diamond}$ is a strong symmetric monoidal functor from $\left(\cat{R},\blacksquare,\check{V}^{\diamond}(\{1\})\right)$ to $\left(\cat{M},\Box,V^{\diamond}(\{1\})\right)$.
\end{thm}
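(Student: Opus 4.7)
The plan is to verify the three ingredients of a strong symmetric monoidal functor in sequence: (i) that each $\Psi_{G,H}$ and $\Psi_{\bullet}$ is a well-defined isomorphism in $\cat{M}$, (ii) that $\Psi$ is natural in both arguments, and (iii) that the coherence diagrams (associativity pentagon, left and right unitor triangles, and the symmetry hexagon) commute. Since the paper has already announced that the verification is routine, the proposal will emphasize how to organize the case analysis rather than grind through every equality.

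First I would check that $\Psi_{G,H}$ is a well-defined multigraph homomorphism. Under $U\Upsilon^{\diamond}$, an incidence hypergraph $K$ becomes a bipartite multigraph whose vertex set is the disjoint union of $\check{V}(K)$ (tagged by $1$) and $\check{E}(K)$ (tagged by $2$), and whose edges are the incidences of $K$, each joining the tagged vertex and tagged edge it was incident to. The vertex set of $U\Upsilon^{\diamond}(G)\Box U\Upsilon^{\diamond}(H)$ therefore splits into four blocks indexed by $(1,1)$, $(2,1)$, $(1,2)$, $(2,2)$, matching exactly the four cases $n=1,2,3,4$ that partition the vertices and edges of $G\blacksquare H$. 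Thus $V(\Psi_{G,H})$ is set-theoretically a bijection by construction; similarly the two cases for $E(\Psi_{G,H})$ match the two $\{1,2\}$-indexed blocks of edges of the graph box product to the four $n$-indexed incidence blocks of $G\blacksquare H$, and one checks the endpoint condition by reading off $\varsigma_{G\blacksquare H}$ and $\omega_{G\blacksquare H}$ case by case. This makes each $\Psi_{G,H}$ a bijection on vertices and on edges that preserves endpoints, hence an isomorphism in $\cat{M}$. The map $\Psi_{\bullet}$ is a bijection between singletons, trivially an isomorphism.

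Next, naturality of $\Psi$ is checked componentwise: given $\phi:G_{1}\to G_{2}$ and $\psi:H_{1}\to H_{2}$ in $\cat{R}$, the two composites in the naturality square reduce in each of the four vertex cases and four edge cases to applying $\check{V}(\phi)$, $\check{E}(\phi)$, $I(\phi)$ (and their $\psi$ counterparts) to the appropriate coordinate, which follows immediately from the formulas for $\phi\blacksquare\psi$ and the pointwise action of $U\Upsilon^{\diamond}$ on morphisms.

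Finally, I would verify the coherence diagrams. The associativity pentagon expands on both sides into a triple product whose vertex set has eight blocks (tagged by sign choices over three factors) and whose edge/incidence sets expand similarly; both routes through the pentagon send the blocks to the same reassociated triple, and the endpoint/incidence data track through identically once the associator $\check{\alpha}$ is unwound. The unit triangles use that $\check{V}^{\diamond}(\{1\})$ has no edges and no incidences, so only the $n=1$ block survives in $G\blacksquare\check{V}^{\diamond}(\{1\})$ and collapses under $\check{\rho}_{G}$; on the $\cat{M}$ side the corresponding collapse is the standard right unitor of $\Box$ applied to $U\Upsilon^{\diamond}(G)\Box V^{\diamond}(\{1\})$. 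The symmetry hexagon reduces to observing that the commutator $\check{\gamma}_{G,H}$ of the Laplacian product swaps $n=2$ with $n=3$ and fixes $n=1,4$, matching exactly the standard swap on $U\Upsilon^{\diamond}(G)\Box U\Upsilon^{\diamond}(H)$ in $\cat{M}$ after relabeling.

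The main obstacle is simply the bookkeeping: the Laplacian product has four indexed blocks for vertices/edges and four for incidences, so the pentagon and hexagon each require a case analysis over several combinations. None of these is conceptually difficult, but organizing the cases so that each coherence equation is checked exactly once, rather than duplicated across symmetric branches of the Triforce of Duality (Figure \ref{triforce}), is what keeps the proof short and is the reason the paper is content to label it ``routine.''
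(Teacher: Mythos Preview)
Your proposal is correct and matches the paper's approach exactly: the paper declares the verification ``routine'' and supplies no proof beyond the explicit structure maps, so the case-by-case check of bijectivity, naturality, and coherence that you outline is precisely what is intended. One minor terminological point: the associator compatibility diagram for a monoidal \emph{functor} is a hexagon rather than a pentagon (the pentagon is the coherence axiom for the monoidal category itself), but this does not affect the substance of your argument.
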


Since $U$ itself is monoidal, the following isomorphisms result, showing how $U$ and $\Upsilon^{\diamond}$ entangle with the box products $\vec{\Box}$, $\Box$, and $\blacksquare$.

\begin{cor}[Underlying Laplacian product]
For $G,H\in\ob(\cat{R})$, the following isomorphisms are natural.
\begin{align*}
U\left(\Upsilon^{\diamond}(G)\vec{\Box}\Upsilon^{\diamond}(H)\right)
\cong
U\Upsilon^{\diamond}(G) \Box U\Upsilon^{\diamond}(G)
\cong
U\Upsilon^{\diamond}(G \blacksquare H).
\end{align*}
\end{cor}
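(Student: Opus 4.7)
The plan is to obtain the two isomorphisms as an immediate consequence of two monoidal functor structures already in hand, rather than unpacking vertex/edge set descriptions by hand.

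First, for the left-hand isomorphism, I would invoke the fact (established earlier in Section \ref{CatM}) that $\xymatrix{\cat{Q}\ar[r]^{U} & \cat{M}}$ is a strict symmetric monoidal functor from $\left(\cat{Q},\vec{\Box},\vec{V}^{\diamond}(\{1\})\right)$ to $\left(\cat{M},\Box,V^{\diamond}(\{1\})\right)$. Applied to $\Upsilon^{\diamond}(G)$ and $\Upsilon^{\diamond}(H)$, strictness gives an equality (a fortiori a natural isomorphism)
\begin{equation*}
U\!\left(\Upsilon^{\diamond}(G)\vec{\Box}\Upsilon^{\diamond}(H)\right)\;\cong\;U\Upsilon^{\diamond}(G)\Box U\Upsilon^{\diamond}(H).
\end{equation*}

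Second, for the right-hand isomorphism, I would simply invoke the theorem immediately preceding the corollary: $U\Upsilon^{\diamond}$ is strong symmetric monoidal from $\left(\cat{R},\blacksquare,\check{V}^{\diamond}(\{1\})\right)$ to $\left(\cat{M},\Box,V^{\diamond}(\{1\})\right)$. The coherence isomorphism $\Psi_{G,H}$ is precisely the natural isomorphism
\begin{equation*}
U\Upsilon^{\diamond}(G)\Box U\Upsilon^{\diamond}(H)\;\xrightarrow{\;\Psi_{G,H}\;}\;U\Upsilon^{\diamond}(G\blacksquare H),
\end{equation*}
whose componentwise formula was just defined and whose naturality is part of what strong monoidal means.

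Composing these two natural isomorphisms yields the full chain in the corollary. Naturality in each coordinate follows from the naturality of each constituent structure isomorphism, which is guaranteed by the monoidal structures themselves; no further bookkeeping is required. Since both pieces are quoted from results stated in the excerpt, there is essentially no obstacle --- the ``hard part'' was already absorbed into proving that $U$ is strict monoidal and that $U\Upsilon^{\diamond}$ is strong monoidal via $\Psi$, so the corollary is a one-line composition of two known isomorphisms.
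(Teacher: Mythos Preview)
Your proposal is correct and matches the paper's approach exactly: the paper does not even give a separate proof, simply prefacing the corollary with ``Since $U$ itself is monoidal, the following isomorphisms result,'' which together with the immediately preceding theorem that $U\Upsilon^{\diamond}$ is strong monoidal via $\Psi$ is precisely your two-step argument.
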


Moreover, the right adjoint $\Upsilon\vec{D}$ is a lax monoidal functor, but is sadly not strong.

\begin{cor}[Symmetric monoidal functor, $\Upsilon\vec{D}$]
The functor $\Upsilon\vec{D}$ is a lax symmetric monoidal functor from $\left(\cat{M},\Box,V^{\diamond}(\{1\})\right)$ to $\left(\cat{R},\blacksquare,\check{V}^{\diamond}(\{1\})\right)$, but is not strong.
\end{cor}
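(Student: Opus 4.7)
My plan is to get the lax symmetric monoidal structure for free from the adjunction $U\Upsilon^{\diamond}\dashv\Upsilon\vec{D}$, and then rule out strongness by producing an object at which the candidate unit coherence visibly fails to be an isomorphism.

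First I would assemble the adjunction: composing $U\dashv\vec{D}$ from Section \ref{CatM} with $\Upsilon^{\diamond}\dashv\Upsilon$ from \cite{IH1} gives $U\Upsilon^{\diamond}\dashv\Upsilon\vec{D}$. The preceding theorem shows $U\Upsilon^{\diamond}$ is strong symmetric monoidal with coherences $\Psi$ and $\Psi_{\bullet}$. By the standard doctrinal-adjunction result already invoked in the paper for $\vec{D}$ (cf.\ \cite[p.\ 105]{lipman-hashimoto}), a right adjoint of a strong symmetric monoidal functor automatically carries a lax symmetric monoidal structure whose coherences are the mates of the inverses of the strong coherences. Writing $\eta$ and $\varepsilon$ for the unit and counit of $U\Upsilon^{\diamond}\dashv\Upsilon\vec{D}$, this gives
\begin{align*}
\psi_{G,H} &:= \Upsilon\vec{D}\bigl(\varepsilon_{G}\Box\varepsilon_{H}\bigr)\circ\Upsilon\vec{D}\bigl(\Psi_{\Upsilon\vec{D}(G),\Upsilon\vec{D}(H)}^{-1}\bigr)\circ\eta_{\Upsilon\vec{D}(G)\blacksquare\Upsilon\vec{D}(H)}, \\
\psi_{\bullet} &:= \Upsilon\vec{D}\bigl(\Psi_{\bullet}^{-1}\bigr)\circ\eta_{\check{V}^{\diamond}(\{1\})}.
\end{align*}
Symmetry of the lax structure is automatic since mates of symmetric coherences are symmetric, and the hexagon/triangle axioms for $\psi$, $\psi_{\bullet}$ follow formally from those for $\Psi$, $\Psi_{\bullet}$ together with the triangle identities of the adjunction.

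For the failure of strongness it is enough to exhibit one structure map that is not invertible. I would use $\psi_{\bullet}$. Directly, $\vec{D}$ sends $V^{\diamond}(\{1\})$ to the isolated-vertex quiver $\vec{V}^{\diamond}(\{1\})$, and the proof of the previous theorem already computed $\Upsilon\bigl(\vec{V}^{\diamond}(\{1\})\bigr)\cong\check{V}^{\diamond}(\{1\})\coprod\check{E}^{\diamond}(\{1\})$, an object with exactly one vertex, one edge, and zero incidences. Since $\check{V}^{\diamond}(\{1\})$ has no edges, no morphism $\check{V}^{\diamond}(\{1\})\to\Upsilon\vec{D}\bigl(V^{\diamond}(\{1\})\bigr)$ can be an isomorphism in $\cat{R}$, so $\psi_{\bullet}$ fails to be an isomorphism and $\Upsilon\vec{D}$ is not strong.

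The only real obstacle is bookkeeping: writing down $\psi_{G,H}$ correctly in terms of $\eta$, $\varepsilon$, and $\Psi^{-1}$ and then checking that the resulting data indeed satisfies the lax monoidal coherences. Since that verification is the standard mate-under-adjunction argument already cited in the preceding corollary on $\vec{D}$, I would simply reference \cite[p.\ 105]{lipman-hashimoto} rather than repeat the diagram chase. The counterexample to strongness, by contrast, is immediate from the unit computation already performed in the proof above, and requires no further work.
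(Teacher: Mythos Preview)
Your argument is correct and the lax half coincides exactly with the paper's: both invoke the doctrinal-adjunction result of \cite[p.~105]{lipman-hashimoto} applied to $U\Upsilon^{\diamond}\dashv\Upsilon\vec{D}$, with the strong structure on the left adjoint coming from the preceding theorem.

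The only genuine difference is in the witness for ``not strong.'' You show the unit coherence $\psi_{\bullet}:\check{V}^{\diamond}(\{1\})\to\Upsilon\vec{D}\bigl(V^{\diamond}(\{1\})\bigr)$ cannot be invertible, reusing the computation $\Upsilon\bigl(\vec{V}^{\diamond}(\{1\})\bigr)\cong\check{V}^{\diamond}(\{1\})\coprod\check{E}^{\diamond}(\{1\})$ from the previous proof. The paper instead exhibits the failure of the binary coherence, comparing $\Upsilon\vec{D}\bigl(\mathbb{1}_{\cat{M}}\Box\mathbb{1}_{\cat{M}}\bigr)$ with $\Upsilon\vec{D}\bigl(\mathbb{1}_{\cat{M}}\bigr)\blacksquare\Upsilon\vec{D}\bigl(\mathbb{1}_{\cat{M}}\bigr)$ via explicit pictures. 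Your route is shorter and more self-contained, since it recycles an already-stated isomorphism rather than drawing two new objects; the paper's route has the minor advantage of showing that even the tensor coherence (not merely the unit) fails, which some readers may find more convincing for a monoidal-structure claim. Either counterexample suffices.
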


\begin{proof}
By \cite[p.\ 105]{lipman-hashimoto}, the strong monoidal structure of $U\Upsilon^{\diamond}$ yields a lax monoidal structure for $\Upsilon\vec{D}$.  Now, consider $\Upsilon\vec{D}\left(\mathbb{1}_{\cat{M}}\Box\mathbb{1}_{\cat{M}}\right)$ and $\Upsilon\vec{D}\left(\mathbb{1}_{\cat{M}}\right)\blacksquare\Upsilon\vec{D}\left(\mathbb{1}_{\cat{M}}\right)$.
\begin{center}
\includegraphics[scale=1]{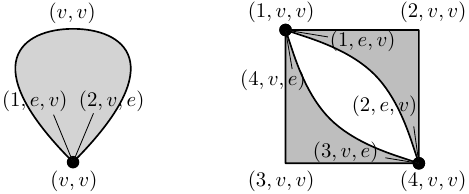}
\end{center}
\end{proof}

Furthermore, the monoidal structure of $U\Upsilon^{\diamond}$ deeply connects the traditional box exponential of $\cat{M}$ to the Laplacian exponential.

\begin{cor}[Laplacian \& box exponentials]
For all $G\in\ob(\cat{M})$ and $H\in\ob(\cat{R})$, the following natural isomorphism holds:  $\Upsilon\vec{D}\Del\left[U\Upsilon^{\diamond}(H),G\right]_{\beta}\cong\left[H,\Upsilon\vec{D}(G)\right]_{L}$.
\end{cor}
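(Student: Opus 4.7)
The strategy is a pure Yoneda / adjunction chase, so I would never touch any vertex/edge/incidence formulas. The goal is to show that the two hypergraphs represent the same contravariant functor on $\cat{R}$, and then invoke Yoneda.

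Concretely, for arbitrary $K\in\ob(\cat{R})$ I would build a chain of natural isomorphisms
\begin{align*}
\cat{R}\bigl(K,\left[H,\Upsilon\vec{D}(G)\right]_{L}\bigr)
&\cong \cat{R}\bigl(K\blacksquare H,\Upsilon\vec{D}(G)\bigr)\\
&\cong \cat{M}\bigl(U\Upsilon^{\diamond}(K\blacksquare H),G\bigr)\\
&\cong \cat{M}\bigl(U\Upsilon^{\diamond}(K)\Box U\Upsilon^{\diamond}(H),G\bigr)\\
&\cong \cat{M}\bigl(U\Upsilon^{\diamond}(K),\Del\left[U\Upsilon^{\diamond}(H),G\right]_{\beta}\bigr)\\
&\cong \cat{R}\bigl(K,\Upsilon\vec{D}\Del\left[U\Upsilon^{\diamond}(H),G\right]_{\beta}\bigr).
\end{align*}
The first isomorphism is the defining adjunction of the Laplacian exponential (Definition \ref{def:LapExp}). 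The second and fifth use the composite adjunction $U\Upsilon^{\diamond}\dashv\Upsilon\vec{D}$, obtained by stacking the adjunctions $\Upsilon^{\diamond}\dashv\Upsilon$ from \cite{IH1} and $U\dashv\vec{D}$ recalled in Section \ref{CatM}. The third invokes the strong symmetric monoidal structure of $U\Upsilon^{\diamond}$ just established. The fourth is the closed structure for $\Box$ on $\cat{M}$ inherited from $\cat{H}$ via $N$ and $\Del$, applied after commuting the two tensor factors by symmetry of $\Box$.

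With this chain in hand, Yoneda forces the claimed isomorphism of objects, and naturality in $G$ and $H$ follows from the naturality of every isomorphism in the chain. I would finish by remarking that the isomorphism is natural in both variables, since each of the five steps is natural in $K$, $G$, and $H$.

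The only potential obstacle is bookkeeping around step three, which requires noting that the strong monoidal coherence map $\Psi_{K,H}:U\Upsilon^{\diamond}(K)\Box U\Upsilon^{\diamond}(H)\to U\Upsilon^{\diamond}(K\blacksquare H)$ is genuinely natural in both arguments (so that precomposition with $\Psi$ yields a natural isomorphism of hom-sets), and step four, where I need the variable tensor factor to end up in the second slot of the box exponential; a single application of the commutator $\gamma$ for $\Box$ fixes this. Everything else is routine application of previously-established adjunctions, and no explicit construction of the isomorphism on vertices, edges, or incidences is required.
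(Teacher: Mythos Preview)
Your proposal is correct and is essentially the same Yoneda/adjunction chase the paper uses: the paper starts from $\cat{R}\bigl(K,\Upsilon\vec{D}\Del[U\Upsilon^{\diamond}(H),G]_{\beta}\bigr)$ and unwinds the adjunctions $\Upsilon^{\diamond}\dashv\Upsilon$, $U\dashv\vec{D}$, $N\dashv\Del$, the $\cat{H}$ box exponential, and the strong monoidal structure of $U\Upsilon^{\diamond}$ in the opposite order to reach $\cat{R}\bigl(K,[H,\Upsilon\vec{D}(G)]_{L}\bigr)$, while you collapse the composite adjunctions and run the chain in reverse. One small slip: the Laplacian exponential adjunction gives $\cat{R}\bigl(K,[H,\Upsilon\vec{D}(G)]_{L}\bigr)\cong\cat{R}\bigl(H\blacksquare K,\Upsilon\vec{D}(G)\bigr)$, not $K\blacksquare H$; with the correct order the symmetry application you flag for step four becomes unnecessary, but either way the argument goes through.
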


\begin{proof}
For $K\in\ob(\cat{R})$, one has
\begin{align*}
\cat{R}\left(K,\Upsilon\vec{D}\Del\left[U\Upsilon^{\diamond}(H),G\right]_{\beta}\right) &\cong		\cat{Q}\left(\Upsilon^{\diamond}(K),\vec{D}\Del\left[U\Upsilon^{\diamond}(H),G\right]_{\beta}\right) \\
&\cong\cat{M}\left(U\Upsilon^{\diamond}(K),\Del\left[U\Upsilon^{\diamond}(H),G\right]_{\beta}\right) \\
&\cong		\cat{H}\left(NU\Upsilon^{\diamond}(K),\left[U\Upsilon^{\diamond}(H),G\right]_{\beta}\right) \\
&\cong\cat{H}\left(U\Upsilon^{\diamond}(H)\Box NU\Upsilon^{\diamond}(K),G\right)\\
&=		\cat{M}\left(U\Upsilon^{\diamond}(H)\Box U\Upsilon^{\diamond}(K),G\right)
\cong\cat{M}\left(U\Upsilon^{\diamond}(H\blacksquare K),G\right)\\
&\cong		\cat{Q}\left(\Upsilon^{\diamond}(H\blacksquare K),\vec{D}(G)\right)
\cong\cat{R}\left(H\blacksquare K,\Upsilon\vec{D}(G)\right) \\
&\cong\cat{R}\left(K,\left[H,\Upsilon\vec{D}(G)\right]_{L}\right)
\end{align*}
\end{proof}

Effectively, paths in the incidence hypergraphs double in length as incidences are converted to edges in the undirected bipartite incidence graph. 

\begin{cor}[Paths \& box exponentials]
\label{c:pathdouble}
For all $G\in\ob(\cat{M})$ and $n\in\mathbb{N}$, $\left[\check{P}_{n/2},\Upsilon\vec{D}(G)\right]_{L}\cong\Upsilon\vec{D}\Del\left[P_{n},G\right]_{\beta}$.
\label{c:PathAndBox}
\end{cor}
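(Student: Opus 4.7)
The plan is to obtain this statement as a direct specialization of the preceding corollary (Laplacian \& box exponentials) with $H := \check{P}_{n/2}$. That corollary furnishes, for every $G \in \ob(\cat{M})$ and $H \in \ob(\cat{R})$, a natural isomorphism
\[
\Upsilon\vec{D}\Del\bigl[U\Upsilon^{\diamond}(H),G\bigr]_{\beta} \cong \bigl[H,\Upsilon\vec{D}(G)\bigr]_{L}.
\]
So the only additional ingredient required is the identification $U\Upsilon^{\diamond}\bigl(\check{P}_{n/2}\bigr) \cong P_{n}$ in $\cat{M}$, which is precisely the ``path-doubling'' phenomenon alluded to immediately before the corollary.

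To justify this identification, I would unwind the definitions of $\Upsilon^{\diamond}$ and $U$ on the generating presentation of $\check{P}_{n/2}$. Writing $\check{P}_{n/2} = (a_{0},i_{1},a_{1},i_{2},\dots,i_{n},a_{n})$, recall that the left adjoint $\Upsilon^{\diamond}$ produces the bipartite incidence quiver: every vertex and every edge of an incidence hypergraph becomes a single quiver-vertex, and each incidence becomes a directed quiver-edge joining the corresponding pair. Applied to $\check{P}_{n/2}$, this yields a quiver whose vertex set is the $(n+1)$-element set $\{a_{0},\dots,a_{n}\}$ and whose edge set is $\{i_{1},\dots,i_{n}\}$, with $i_{j}$ connecting $a_{j-1}$ to $a_{j}$. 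Passing through the undirecting functor $U$ strips the orientation, and the resulting multigraph is manifestly isomorphic to $P_{n}$.

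Composing these two facts completes the proof, since
\[
\bigl[\check{P}_{n/2},\Upsilon\vec{D}(G)\bigr]_{L}
\cong \Upsilon\vec{D}\Del\bigl[U\Upsilon^{\diamond}(\check{P}_{n/2}),G\bigr]_{\beta}
\cong \Upsilon\vec{D}\Del\bigl[P_{n},G\bigr]_{\beta}.
\]
There is no substantive obstacle: the work lies entirely in the routine bookkeeping needed to exhibit the isomorphism $U\Upsilon^{\diamond}(\check{P}_{n/2}) \cong P_{n}$ by pairing quiver-vertices and quiver-edges from the bipartite construction with the vertices and edges of $P_{n}$. All heavy lifting has already been discharged by the preceding corollary, the monoidality of $U\Upsilon^{\diamond}$, and the adjunctions linking $\cat{Q}$, $\cat{H}$, $\cat{M}$, and $\cat{R}$.
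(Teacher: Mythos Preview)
Your proposal is correct and follows exactly the same route as the paper: invoke the preceding corollary with $H=\check{P}_{n/2}$, then use the identification $U\Upsilon^{\diamond}(\check{P}_{n/2})\cong P_{n}$ (which the paper simply asserts ``from direct calculation'' while you spell out the bipartite-incidence reasoning). The chain of isomorphisms you display is identical to the paper's.
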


\begin{proof}
From direct calculation, one can show $U\Upsilon^{\diamond}\left(\check{P}_{n/2}\right)\cong P_{n}$.  Thus,
\begin{align*}
\left[\check{P}_{n/2},\Upsilon\vec{D}(G)\right]_{L}
\cong\Upsilon\vec{D}\Del\left[U\Upsilon^{\diamond}\left(\check{P}_{n/2}\right),G\right]_{\beta}
\cong\Upsilon\vec{D}\Del\left[P_{n},G\right]_{\beta}.
\end{align*}
\end{proof}

\subsection{Examples and Why ``Laplacian'' Product?}

Due to its box-like structure, the terminology ``hom-box-product'' or ``complete box product'' seems just as valid. Here we use Theorem \ref{t:WWT} as motivation to provide examples of how to reclaim the signless Laplacian by evaluating the Laplacian exponential at paths; incidence orientations can be applied afterwards for the general oriented hypergraphic Laplacian. We have shown that
\begin{align*}
\check{V}[\check{P}_{k/2},G]_{L}&=\cat{R}(\check{P}_{k/2},G), \\
\check{E}[\check{P}_{k/2},G]_{L}&=\cat{R}\left(\check{P}_{k/2}^{\#},G\right)\cong\cat{R}\left(\check{P}_{k/2},G^{\#}\right), \\
I\left[\check{P}_{k/2},G\right]_{L}
&=\cat{R}\left(\check{P}_{k/2}\blacksquare I^{\diamond}\left(\{1\}\right),G\right).
\end{align*}

In general, the incidences of the Laplacian exponential are incidence-prism mappings. However, the incidences $I\left[\check{P}_{k/2},G\right]_{L}
=\cat{R}\left(\check{P}_{k/2}\blacksquare I^{\diamond}\left(\{1\}\right),G\right)$ are mappings of the an incidence-ladder graph, in which one side is the dual of the other, into $G$ --- as seen by Figures \ref{fig:RLaplacianProd1} and \ref{fig:RLaplacianProd2}. Each new rung creates a new digon to map that effectively searches for interlocking $2 \times 2$ minors. Moreover, mappings onto a single incidence, adjacency, or co-adjacency are allowed, thus creating a morphism connection between the entry (incidence map), row (adjacency map), and column (co-adjacency map). 

Consider $I\left[\check{P}_{1/2},G\right]_{L} =\cat{R}\left(\check{P}_{1/2}\blacksquare I^{\diamond}\left(\{1\}\right),G\right)=\cat{R}\left(\check{P}_{1/2}\blacksquare \check{P}_{1/2},G\right)$ and a digon-free incidence-simple incidence hypergraph $G$. The vertices and edges correspond to the entries of the complete incidence matrix $\overline{\mathbf{H}}_{G}$, while the incidences are determined by the mappings of the digon from Figure \ref{fig:RLaplacianProd1}. There are three possible maps of the digon: (1) a vertex-to-vertex backstep (or edge-to-edge co-backstep) to determine entry of ${\mathbf{H}}_{G}$ (or ${\mathbf{H}}^T_{G}$); (2) a vertex-to-vertex adjacency; and (3) an edge-to-edge co-adjacency. The first map identifies a specific incidence in $G$; which for $[\check{P}_{1/2},G]_{L}$ is also naturally associated to a vertex --- this is regarded as the vertex representing the location in the incidence matrix, that is occupied by a value $1$ for the backstep-incidence in the $(v,e)$ and $(e,v)$ positions by duality. Effectively, the digon was searching for a $2 \times 2$ minor but has collapsed onto a single entry. Now consider the second and third map types that include incidence $i$ as its first incidence; these produce all adjacencies and co-adjacencies that contain that incidence --- again, these are collapsed $2 \times 2$ minors onto $2$ entries in a row/column search.

\begin{ex}
Consider the incidence graph $G$ and its dual $G^{\#}$ in Figure \ref{fig:LapExpEx1}, with incidence matrices

\begin{align*}
\mathbf{H}_{G}&=\left[
\begin{array}{ccccc}
1 & 0 & 0 & 1 & 1 \\ 
1 & 1 & 0 & 0 & 0 \\ 
0 & 1 & 1 & 0 & 1 \\
0 & 0 & 1 & 1 & 0 
\end{array}
\right], & \mathbf{H}_{G^{\#}}=\mathbf{H}^{T}_{G}=\left[
\begin{array}{cccc}
1 & 1 & 0 & 0 \\ 
0 & 1 & 1 & 0 \\ 
0 & 0 & 1 & 1 \\
1 & 0 & 0 & 1 \\
1 & 0 & 1 & 0 
\end{array}
\right].
\end{align*}

To calculate the vertices and edges of $[\check{P}_{1/2},G]_{L}$ consider the mapping of a single path of length $1/2$ into $G$ and $G^{\#}$, respectively. 
\begin{figure}[H]
    \centering
    \includegraphics{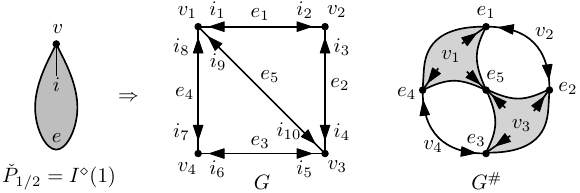}\\
    \caption{An extroverted oriented hypergraph $G$ and its dual $G^{\#}$}
    \label{fig:LapExpEx1}
\end{figure}

Clearly, maps of $\check{P}_{1/2}$ are uniquely determined by the image of incidence $i$, of which there are exactly $10$. Thus, there are $10$ vertices in $[\check{P}_{1/2},G]_{L}$, and the incidence matrix $\mathbf{H}_{G}$ can be recovered by the vertex-edge image $(v_j,e_k)$ corresponding to an entry of $1$ in the $(j,k)$ position of $\mathbf{H}_{G}$. In Figure \ref{fig:LapExpEx2AB} (left) the vertices are placed in a $\lvert V \rvert \times \lvert E \rvert$ grid, corresponding to the non-zero entries of $\mathbf{H}_{G}$.

\begin{figure}[H]
    \centering
    \includegraphics{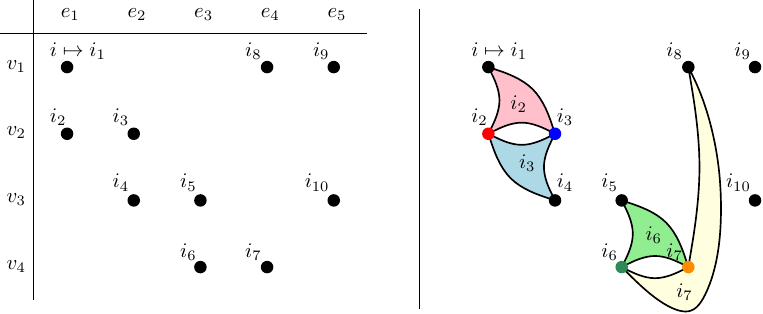}\\
    \caption{Left: The vertices of the incidence hypergraph $[\check{P}_{1/2},G]_{L}$ are the non-zero positions in the incidence matrix. Right: The vertices for rows $2$ and $4$ are colored, and their row/column pairing for each position determine the edge and incidences (corresponding edges colored).}
    \label{fig:LapExpEx2AB}
\end{figure}

Since $\check{P}_{1/2}$ is self-dual there are also $10$ edges. These edges are connected to the vertices by the incidences determined by the images of the digon in Figure \ref{fig:RLaplacianProd1}. The incidences in the edges are determined by edge $i \mapsto i_{\ell}$ is incident to all the vertices in the row and column of vertex $i \mapsto i_{\ell}$. To see this, consider Figure \ref{fig:LapExpEx2AB} (right). Both the vertex and edge obtained by the map $i \mapsto i_{2}$ are colored red, while the backstep incidence map that corresponds the the backstep $(i_2,i_2)$ is the incidence between the red vertex and red edge. The other adjacency and co-adjacency digon maps reach the vertices that correspond to non-zero entries in the matrix. This argument is repeated for the other vertices. Figure \ref{fig:LapExpEx2CD} calculates the edges and incidences for rows $1$ and $3$, respectively. Again, each colored vertex has an edge corresponding to the row/column pair, with incidences where the non-zero entries are located. 

\begin{figure}[H]
    \centering
    \includegraphics{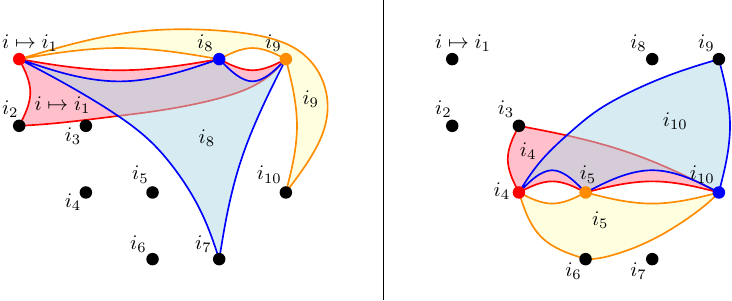}\\
    \caption{Left: The vertices for row $1$ are colored, and their row/column pairing for each position determine the edge and incidences (corresponding edges colored).  Right: The vertices for row $3$ are colored, and their row/column pairing for each position determine the edge and incidences (corresponding edges colored).}
    \label{fig:LapExpEx2CD}
\end{figure}

The entire $[\check{P}_{1/2},G]_{L}$ appears in Figure \ref{fig:LapExpEx2E}.

\begin{figure}[H]
    \centering
    \includegraphics{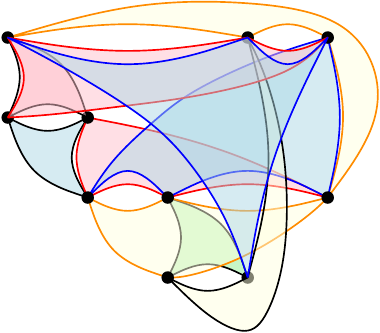}\\
    \caption{The incidence hypergraph $[\check{P}_{1/2},G]_{L}$.}
    \label{fig:LapExpEx2E}
\end{figure}

The matrix $\mathbf{H}^T_{G}$ is found dually by interchanging vertices and edges; or the reader may simply ``transpose'' $[\check{P}_{1/2},G]_{L}$ in Figure \ref{fig:LapExpEx2E}.
\label{e:LastExample}
\end{ex}

If there are multiple incidences, then there are additional incidences for each loop adjacency. If there are digons, there are additional incidences for each digon map ($2 \times 2$ minor).  In Example \ref{e:ReallyLastExample} we show that multiple incidences and digons extend the row and column sampling.

\begin{ex}

Consider the incidence hypergraph $G$ in Figure \ref{fig:LapExpEx2} (again with constant orientation) with incidence matrix
\begin{align*}
\mathbf{H}_{G}&=\left[
\begin{array}{cc}
1 & 2  \\ 
1 & 1  \\ 
1 & 0  \\
\end{array}
\right].
\end{align*}
$G$ is depicted in Figure \ref{fig:LapExpEx2} (right) with its incidences in position with the incidence matrix entries.
\begin{figure}[H]
    \centering
    \includegraphics{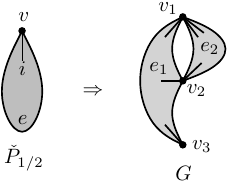}\\
    \caption{An incidence hypergraph $G$}
    \label{fig:LapExpEx2}
\end{figure}

Again, the vertices and edges of $[\check{P}_{1/2},G]_{L}$ are the incidences of $G$, thus there are $6$ vertices and $6$ edges, as parallel incidences are counted separately. Figure \ref{fig:LapExpEx2Part1} (left) shows the vertices of $[\check{P}_{1/2},G]_{L}$ arranged into the ``incidence matrix.'' 
\begin{figure}[H]
    \centering
    \includegraphics{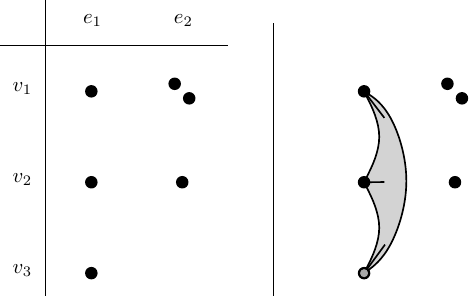}\\
    \caption{Left: The vertices of the incidence hypergraph $[\check{P}_{1/2},G]_{L}$ correspond to the non-zero entries of the incidence matrix. Right: The edge from the $(v_3,e_1)$ incidence reaches all the incidences in its corresponding row and column in the incidence matrix.}
    \label{fig:LapExpEx2Part1}
\end{figure}

Again, the digon map produces the incidences of $[\check{P}_{1/2},G]_{L}$. However, the $(v_3,e_1)$ incidence can only reach the incidences within $e_1$ to form an edge --- this can be interpreted as starting incidence matrix value $1$ in the $(v_3,e_1)$ position and searching its row and column for non-zero entries. This gives rise to the edge in Figure \ref{fig:LapExpEx2Part1} (left).

The remaining incidences of $G$ are either in the digon and/or are part of a parallel incidence. The parallel incidences causes multi-sampling of the row/column when it appears in a digon embedding. However, the digon in $G$ will cause an additional incidence in the edge when two non-zero entries in a row and column ``triangulate'' at the non-zero entry --- effectively finding a $2 \times 2$ minor with all non-zero entries in the incidence matrix (up to multiplicity of incidences). 

Consider the $(v_2,e_1)$ entry in the incidence matrix and its row and column in Figure \ref{fig:LapExpEx2MatrixSearch}. The backstep mapping attaches the $(v_2,e_1)$ edge to the $(v_2,e_1)$ vertex, while the adjacency and co-adjacency maps search the columns and rows for non-zero entries. These non-zero entries then search for $2 \times 2$ minor with all non-zero entries via the remaining digon mapping. There are two mappings to the $(v_1,e_2)$ position as there are are two parallel incidences, while there are no mappings to the $(v_3,e_2)$ position.

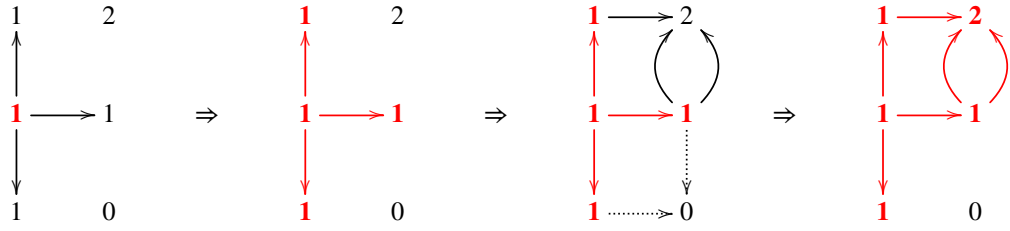
\begin{figure}[H]
\begin{align*}
\xymatrix{
1 &	2 & &  \textcolor{red}{\textbf{1}} & 2 & & \textcolor{red}{\textbf{1}} \ar[r] & 2 & & \textcolor{red}{\textbf{1}} \ar@[red][r] & \textcolor{red}{\textbf{2}}\\
\textcolor{red}{\textbf{1}} \ar[u] \ar[d] \ar[r]& 1  & \Rightarrow &  \textcolor{red}{\textbf{1}} \ar@[red][u] \ar@[red][d] \ar@[red][r] & \textcolor{red}{\textbf{1}} & \Rightarrow & \textcolor{red}{\textbf{1}} \ar@[red][u] \ar@[red][d] \ar@[red][r] & \textcolor{red}{\textbf{1}} \ar@/^1pc/@{->}[u] \ar@/_1pc/@{->}[u] \ar@{.>}[d] & \Rightarrow & \textcolor{red}{\textbf{1}} \ar@[red][u] \ar@[red][d] \ar@[red][r] & \textcolor{red}{\textbf{1}} \ar@[red]@/^1pc/@{->}[u] \ar@[red]@/_1pc/@{->}[u] \\
1 & 0 & &  \textcolor{red}{\textbf{1}} & 0 & & \textcolor{red}{\textbf{1}} \ar@{.>}[r] & 0 & & \textcolor{red}{\textbf{1}} & 0 \\
}
\end{align*}
\caption{Digon mappings to produce incidences of $[\check{P}_{1/2},G]_{L}$ are row/column searches that form $2 \times 2$ minor grids; $(v_2,e_1)$ shown.}
\label{fig:LapExpEx2MatrixSearch}
\end{figure}
The edge containing the incidence from Figure \ref{fig:LapExpEx2MatrixSearch} is the first edge in Figure \ref{fig:LapExpEx2Part2}. The remaining edges are determined similarly. 
\begin{figure}[H]
    \centering
    \includegraphics{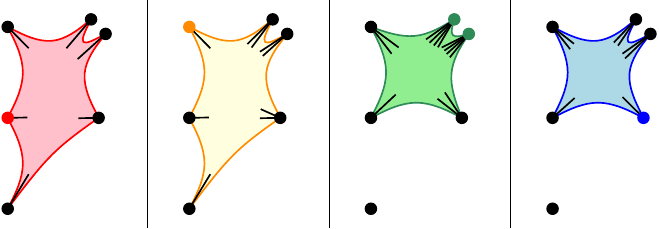}\\
    \caption{The edges of $[\check{P}_{1/2},G]_{L}$ using the digon or parallel incidence.}
    \label{fig:LapExpEx2Part2}
\end{figure}
The third edge of Figure \ref{fig:LapExpEx2Part2} appears twice, one for each parallel incidence. Thus, parallel incidences produce parallel edges in $[\check{P}_{1/2},G]_{L}$.
\label{e:ReallyLastExample}
\end{ex}

We conclude with the note that the vertices and edges of $[\check{P}_{k/2},G]_{L}$ are naturally labeled (with multiplicity) by the entries locations in $\overline{\mathbf{H}}_{G}^{k}$, and when $k$ is even they correspond to powers of the Laplacian. The incidences are determined by mappings of the incidence-ladder $\check{P}_{k/2}\blacksquare \check{P}_{1/2}$ into $G$ that provide the insight on the minor structure of the incidence matrix. It is also worth asking if replacing ``paths'' with another class of graphs will provide ``Laplacian-type'' results or ways to study graph-like structures. Evaluations of the Laplacian exponential at $k$-cycles, $[\check{C}_{k},G]_{L}$, when restricted to monic maps determine locally-hamiltonicity of vertices and edges, and local-eulerianness of the incidences --- this is immediate from the bipartite representation. As to the answer on why restrict to monic maps? We recall that relaxation of the monicness of path embeddings unify the study of adjacency and Laplacian matrices \cite{OHHar,IH2,AH1}; imposing monic conditions seems as natural.

\begin{appendices}

\section{Incidence Hypergraphs}
\label{IHdefn}

Formally, an incidence hypergraph (from \cite[p.\ 17]{IH1}) is defined as follows:
Let $\cat{D}$ be the finite category
\begin{align*}
\xymatrix{
0	&	2\ar[l]_{y}\ar[r]^{z}	&	1\\
}
\end{align*}
and the category of incidence hypergraphs is $\cat{R}:=\Set^{\cat{D}}$ with evaluation functors 
\begin{align*}
\xymatrix{\Set & \cat{R}\ar@/_/[l]_(0.4){\check{V}}\ar@/^/[l]^(0.4){\check{E}}\ar[r]^(0.4){I} & \Set} 
\end{align*}
at $0$, $1$, and $2$, respectively.  An object $G$ of $\cat{R}$ consists of the following:  a set $\check{V}(G)$, a set $\check{E}(G)$, a set $I(G)$, a function $\varsigma_{G}:I(G)\to\check{V}(G)$, and a function $\omega_{G}:I(G)\to\check{E}(G)$. Note that the incidence function $\iota_{G}: I(G)\rightarrow\check{V}(G)\times\check{E}(G)$ used in \cite{OHSachs,OHMTT} is uniquely determined by the diagram below, where $\pi_{\check{V}(G)}$ and $\pi_{\check{E}(G)}$ are the canonical projections.
\begin{align*}
\xymatrix{
&   &   I(G)\ar[dll]_{\varsigma_{G}}\ar[drr]^{\omega_{G}}\ar@{..>}[d]^{\exists!\iota_{G}}\\
\check{V}(G)    &   &   \check{V}(G)\times\check{E}(G)\ar[ll]^{\pi_{\check{V}(G)}}\ar[rr]_{\pi_{\check{E}(G)}}  &    &   \check{E}(G)\\
}
\end{align*}
Note from \cite{IH1} that the single incidence $1$-edge $I^{\diamond}(\{1\})$ is both the terminal object as well as the non-trivial generator of the category --- the other generators being the isolated vertex $\check{V}^{\diamond}(\{1\})$ and loose edge $\check{E}^{\diamond}(\{1\})$.

\section{Structure Maps}
\label{StrMaps}
The structure maps for each symmetric monoidal box product are included here for completeness. The verification of the necessary identities is tedious but routine.

\subsection{Quivers}
\begin{defn}[Structure maps]
For quivers $M$, $P$, and $Q$, define the following structure maps:
\begin{enumerate}
\item $\xymatrix{Q\vec{\Box}\vec{V}^{\diamond}(\{1\})\ar[r]^(0.7){\vec{r}_{Q}} & Q}$ by $\vec{V}\left(\vec{r}_{Q}\right)(v,1):=v$, $\vec{E}\left(\vec{r}_{Q}\right)(1,e,1):=e$;
\item $\xymatrix{\vec{V}^{\diamond}(\{1\})\vec{\Box}Q\ar[r]^(0.7){\vec{\ell}_{Q}} & Q}$ by $\vec{V}\left(\vec{\ell}_{Q}\right)(1,v):=v$, $\vec{E}\left(\vec{\ell}_{Q}\right)(2,1,e):=e$;
\item $\xymatrix{Q\vec{\Box}P\ar[r]^{\vec{c}_{Q,P}} & P\vec{\Box}Q}$ by $\vec{V}\left(\vec{c}_{Q,P}\right)(v,w):=(w,v)$, $\vec{E}\left(\vec{c}_{Q,P}\right)(n,x,y):=(3-n,y,x)$;
\item $\xymatrix{\left(Q\vec{\Box}P\right)\vec{\Box}M\ar[r]^{\vec{a}_{Q,P,M}} & Q\vec{\Box}\left(P\vec{\Box}M\right)}$ by
\begin{itemize}
\item $\vec{V}\left(\vec{a}_{Q,P,M}\right)((v,w),u):=(v,(w,u))$,
\item $\vec{E}\left(\vec{a}_{Q,P,M}\right)(1,(1,e,w),u):=(1,e,(w,u))$,
\item $\vec{E}\left(\vec{a}_{Q,P,M}\right)(1,(2,v,f),u):=(2,v,(1,f,u))$,
\item $\vec{E}\left(\vec{a}_{Q,P,M}\right)(2,(v,w),g):=(2,v,(2,w,g))$.
\end{itemize}
\end{enumerate}
\end{defn}

\subsection{Set systems and Multigraphs}
\label{StrMapsH}
\begin{defn}[Structure maps]
For set-system hypergraphs $G$, $H$, and $K$, define the following structure maps:
\begin{enumerate}
\item $\xymatrix{G\Box V^{\diamond}(\{1\})\ar[r]^(0.7){r_{G}} & G}$ by $V\left(r_{G}\right)(v,1):=v$, $E\left(r_{G}\right)(1,e,1):=e$;
\item $\xymatrix{V^{\diamond}(\{1\})\Box G\ar[r]^(0.7){\ell_{G}} & G}$ by $V\left(\ell_{G}\right)(1,v):=v$, $E\left(\ell_{G}\right)(2,1,e):=e$;
\item $\xymatrix{G\Box H\ar[r]^{c_{G,H}} & H\Box G}$ by $V\left(c_{G,H}\right)(v,w):=(w,v)$, $E\left(c_{G,H}\right)(n,x,y):=(3-n,y,x)$;
\item $\xymatrix{\left(G\Box H\right)\Box K\ar[r]^{a_{G,H,K}} & G\Box\left(H \Box K\right)}$ by
\begin{itemize}
\item $V\left(a_{G,H,K}\right)((v,w),u):=(v,(w,u))$,
\item $E\left(a_{G,H,K}\right)(1,(1,e,w),u):=(1,e,(w,u))$,
\item $E\left(a_{G,H,K}\right)(1,(2,v,f),u):=(2,v,(1,f,u))$,
\item $E\left(a_{G,H,K}\right)(2,(v,w),g):=(2,v,(2,w,g))$.
\end{itemize}
\end{enumerate}
\end{defn}

\subsection{Incidence Hypergraphs}
\label{StrMapsR}
\begin{defn}[Structure maps]
For incidence hypergraphs $G$, $H$, and $K$, define the following structure maps:
\begin{enumerate}
\item $\xymatrix{G\check{\Box}\check{V}^{\diamond}(\{1\})\ar[r]^(0.7){\check{r}_{G}} & G}$ by $\check{V}\left(\check{r}_{G}\right)(v,1):=v$, $\check{E}\left(\check{r}_{G}\right)(1,e,1):=e$, $I\left(\check{r}_{G}\right)(1,i,1):=i$;
\item $\xymatrix{\check{V}^{\diamond}(\{1\})\check{\Box}G\ar[r]^(0.7){\check{\ell}_{G}} & G}$ by $\check{V}\left(\check{\ell}_{G}\right)(1,v):=v$, $\check{E}\left(\check{\ell}_{G}\right)(2,1,e):=e$, $I\left(\check{\ell}_{G}\right)(2,1,i):=i$;
\item $\xymatrix{G\check{\Box}H\ar[r]^{\check{c}_{G,H}} & H\check{\Box}G}$ by $\check{V}\left(\check{c}_{G,H}\right)(x,y):=(y,x)$, $\check{E}\left(\check{c}_{G,H}\right)(n,x,y):=(3-n,y,x)$, $I\left(\check{c}_{G,H}\right)(n,x,y):=(3-n,y,x)$;
\item $\xymatrix{\left(G\check{\Box}H\right)\check{\Box}K\ar[r]^{\check{a}_{G,H,K}} & G\check{\Box}\left(H\check{\Box}K\right)}$ by
\begin{itemize}
\item $\check{V}\left(\check{a}_{G,H,K}\right)((v,w),u):=(v,(w,u))$,

\item $\check{E}\left(\check{a}_{G,H,K}\right)(1,(1,e,w),u):=(1,e,(w,u))$,
\item $\check{E}\left(\check{a}_{G,H,K}\right)(1,(2,v,f),u):=(2,v,(1,f,u))$,
\item $\check{E}\left(\check{a}_{G,H,K}\right)(2,(v,w),g):=(2,v,(2,w,g))$,

\item $I\left(\check{a}_{G,H,K}\right)(1,(1,i,w),u):=(1,i,(w,u))$,
\item $I\left(\check{a}_{G,H,K}\right)(1,(2,v,j),u):=(2,v,(1,j,u))$,
\item $I\left(\check{a}_{G,H,K}\right)(2,(v,w),k):=(2,v,(2,w,k))$.
\end{itemize}
\end{enumerate}
\end{defn}

\subsection{Laplacian Product}
\label{StrMapsL}
\begin{defn}[Structure maps]
For incidence hypergraphs $G$, $H$, and $K$, define the following structure maps:
\begin{enumerate}
\item $\xymatrix{G\blacksquare\check{V}^{\diamond}(\{1\})\ar[r]^(0.7){\check{\rho}_{G}} & G}$ by $\check{V}\left(\check{\rho}_{G}\right)(1,v,1):=v$, $\check{E}\left(\check{\rho}_{G}\right)(2,e,1):=e$, $I\left(\check{\rho}_{G}\right)(1,i,1):=i$;
\item $\xymatrix{\check{V}^{\diamond}(\{1\})\blacksquare G\ar[r]^(0.7){\check{\lambda}_{G}} & G}$ by $\check{V}\left(\check{\lambda}_{G}\right)(1,1,v):=v$, $\check{E}\left(\check{\lambda}_{G}\right)(3,1,e):=e$, $I\left(\check{\lambda}_{G}\right)(4,1,i):=i$;
\item $\xymatrix{G\blacksquare H\ar[r]^{\check{\gamma}_{G,H}} & H\blacksquare G}$ by $\check{V}\left(\check{\gamma}_{G,H}\right)(n,x,y):=(n,y,x)$, $\check{E}\left(\check{\gamma}_{G,H}\right)(n,x,y):=(5-n,y,x)$, $I\left(\check{\gamma}_{G,H}\right)(n,x,y):=(5-n,y,x)$;
\item $\xymatrix{\left(G\blacksquare H\right)\blacksquare K\ar[r]^{\check{\alpha}_{G,H,K}} & G\blacksquare\left(H\blacksquare K\right)}$ by
\begin{multicols}{2}
\begin{itemize}
\item $\check{V}\left(\check{\alpha}_{G,H,K}\right)(1,(1,v,w),u):= \\ (1,v,(1,w,u))$,
\item $\check{V}\left(\check{\alpha}_{G,H,K}\right)(1,(4,e,f),u):= \\ (4,e,(2,f,u))$,
\item $\check{V}\left(\check{\alpha}_{G,H,K}\right)(4,(2,e,w),g):= \\ (4,e,(3,w,g))$,
\item $\check{V}\left(\check{\alpha}_{G,H,K}\right)(4,(3,v,f),g):= \\ (1,v,(4,f,g))$,

\item $\check{E}\left(\check{\alpha}_{G,H,K}\right)(2,(2,e,w),u):= \\ (2,e,(1,w,u))$,
\item $\check{E}\left(\check{\alpha}_{G,H,K}\right)(2,(3,v,f),u):= \\ (3,v,(2,f,u))$,
\item $\check{E}\left(\check{\alpha}_{G,H,K}\right)(3,(1,v,w),g):= \\ (3,v,(3,w,g))$,
\item $\check{E}\left(\check{\alpha}_{G,H,K}\right)(3,(4,e,f),g):= \\ (2,e,(4,f,g))$,

\item $I\left(\check{\alpha}_{G,H,K}\right)(1,(1,i,w),u):= \\ (1,i,(1,w,u))$,
\item $I\left(\check{\alpha}_{G,H,K}\right)(1,(2,i,f),u):= \\ (2,i,(2,f,u))$,
\item $I\left(\check{\alpha}_{G,H,K}\right)(1,(3,e,j),u):= \\ (3,e,(3,j,u))$,
\item $I\left(\check{\alpha}_{G,H,K}\right)(1,(4,v,j),u):= \\ (4,v,(4,j,u))$,

\item $I\left(\check{\alpha}_{G,H,K}\right)(2,(1,i,w),g):= \\ (2,i,(3,w,g))$,
\item $I\left(\check{\alpha}_{G,H,K}\right)(2,(2,i,f),g):= \\ (1,i,(4,f,g))$,
\item $I\left(\check{\alpha}_{G,H,K}\right)(2,(3,e,j),g):= \\ (3,e,(2,j,g))$,
\item $I\left(\check{\alpha}_{G,H,K}\right)(2,(4,v,j),g):= \\ (4,v,(2,j,g))$,

\item $I\left(\check{\alpha}_{G,H,K}\right)(3,(2,e,w),k):= \\ (3,e,(4,w,k))$,
\item $I\left(\check{\alpha}_{G,H,K}\right)(3,(3,v,f),k):= \\ (4,v,(3,f,k))$,

\item $I\left(\check{\alpha}_{G,H,K}\right)(4,(1,v,w),k):= \\ (4,v,(4,w,k))$,
\item $I\left(\check{\alpha}_{G,H,K}\right)(4,(4,e,f),k):= \\ (3,e,(3,f,k))$.
\end{itemize}
\end{multicols}
\end{enumerate}
\end{defn}

\section{Glossary}
\label{glossary}

\begin{table}[ht]
\centering
\renewcommand{\arraystretch}{1}
\begin{tabular}{|l|l|}
\hline
Expression & Category Note \\
\hline\hline
$\Set$ & Category of sets (topos). \\
$\cat{Q}$ & Category of quivers (topos). \\
$\cat{R}$ & Category of incidence hypergraphs (topos). \\
$\cat{M}$ & Category of multigraphs.\\
$\cat{H}$ & Category of set system hypergraphs.\\
$\cat{E}$ & The finite category \xymatrix{1\ar@/^/[r]^{s}\ar@/_/[r]_{t}	&	0\\}; $\cat{Q}=\Set^{\cat{E}}$. \\
$\cat{D}$ & The finite category \xymatrix{0	&	2\ar[l]_{y}\ar[r]^{z}	&	1\\}; $\cat{R}=\Set^{\cat{D}}$. \\
$\cat{A}(G,H)$ & Homomorphisms from $G$ to $H$ in category $\cat{A}$.\\
\hline
\end{tabular}
\caption{Glossary of categorical names.}
\label{t:library1}
\end{table}

\begin{table}[ht]
\centering
\renewcommand{\arraystretch}{1}
\begin{tabular}{|l|l|}
\hline
Decorations & Note \\
\hline\hline
$\vec{\cdot}$ & Pertaining to $\cat{Q}$. Vertices $\vec{V}$ and edges $\vec{E}$. \\
$\check{\cdot}$ & Pertaining to $\cat{R}$. Vertices $\check{V}$ and edges $\check{E}$. \\
$\cdot$ & Pertaining to $\cat{M}$ and $\cat{H}$. Vertices $V$ and edges $E$. \\
$\cdot^{\star}$ & Right adjoint. Generally maximal; completed graph $V^{\star}$.\\
$\cdot^{\diamond}$ & Left adjoint. Generally disjoint; isolated vertices $V^{\diamond}$; a single vertex $V^{\diamond}(\{1\})$.\\
$\cdot^{\#}$ & Incidence duality in $\cat{R}$.\\
\hline
\end{tabular}
\caption{Glossary of functor decorations.}
\label{t:library2}
\end{table}

\begin{table}[ht]
\centering
\renewcommand{\arraystretch}{1}
\begin{tabular}{|l|l|}
\hline
Functors & Evaluation Note \\
\hline\hline
$U$ & Undirects directed edges; $\cat{Q} \rightarrow \cat{M}$. \\
$\vec{D}$ & Forms equivalent digraph; $\cat{M} \rightarrow \cat{Q}$. \\
$N$ & Inclusion of a graph as a set system; $\cat{M} \rightarrow \cat{H}$.\\
$\Del$ & Deletes any edges of size greater than 2 or less than 1; $\cat{H} \rightarrow \cat{M}$. \\
$\mathcal{I}$ & Add incidences to hyperedges; $\cat{H} \rightarrow \cat{R}$. \\
$\mathscr{F}$ & Forget incidences; $\cat{R} \rightarrow \cat{H}$ (not a functor). \\
$I$ & Incidence set; $\cat{R} \rightarrow \Set$. \\
$I^{\diamond}(\{1\})$ & A single incidence with vertex and edge ($1$-edge). \\
$\mathcal{P}$ & Powerset; $\Set \rightarrow \Set$. \\
$\Upsilon $  &  Converts directed edges to incidences; $\cat{Q} \rightarrow \cat{R}$, (logical functor). \\ 
$\Upsilon ^{\diamond}$  & Bipartite incidence digraph ($v\to e$). \\ 
$U \Upsilon ^{\diamond}$  & Bipartite incidence graph. \\ 
$Y_{\cat{A}}$ & Yoneda embedding into $\cat{A}$.\\
\hline
\end{tabular}
\caption{Glossary of functors.}
\label{t:library3}
\end{table}

\begin{table}[ht]
\centering
\renewcommand{\arraystretch}{1}
\begin{tabular}{|l|l|}
\hline
Operations & Notes \\
\hline\hline
$\vec{\Box}$ & Box product in $\cat{Q}$. \\
$\check{\Box}$ & Box product in $\cat{R}$. \\
${\Box}$ & Box product in $\cat{M}$ and $\cat{H}$. \\
$\blacksquare$ & Laplacian product in $\cat{R}$. \\
$\cdot\mathrm{ev}_{X_{2}}^{X_{1}}$ & Evaluation map to form the box exponential. \\
$\left[X_{1},X_{2}\right]_{B}$ & Box exponential in $\cat{Q}$. \\
$\left[X_{1},X_{2}\right]_{V}$ & Box exponential in $\cat{R}$. \\
$\left[X_{1},X_{2}\right]_{\beta}$ & Box exponential in $\cat{M}$ and $\cat{H}$. \\
$\left[X_{1},X_{2}\right]_{L}$ & Laplacian exponential in $\cat{R}$. \\
\hline
\end{tabular}
\caption{Glossary of operations.}
\label{t:library4}
\end{table}

\end{appendices}



\end{document}